\definecolor{webgreen}{rgb}{0,.5,0}
\definecolor{webbrown}{rgb}{.6,0,0}
\DeclareMathOperator\Res{Res}
\begin{document}

\theoremstyle{plain}
\newtheorem{theorem}{Theorem}
\newtheorem{corollary}[theorem]{Corollary}
\newtheorem{lemma}[theorem]{Lemma}
\newtheorem{proposition}[theorem]{Proposition}

\theoremstyle{definition}
\newtheorem{definition}[theorem]{Definition}
\newtheorem{example}[theorem]{Example}
\newtheorem{conjecture}[theorem]{Conjecture}

\theoremstyle{remark}
\newtheorem{remark}[theorem]{Remark}

\begin{center}
\vskip 1cm{\LARGE\bf The Formulas for the Distribution of the\\
\vskip .12in
3-Smooth, 5-Smooth, 7-Smooth and all other Smooth Numbers}
\vskip 1cm
\large
Raphael Schumacher\\
raphschu@ethz.ch\\
\end{center}

\vskip .2 in

\begin{abstract}
\noindent In this paper we present and prove rapidly convergent formulas for the distribution of the $3$-smooth, $5$-smooth, $7$-smooth and all other smooth numbers. One of these formulas is another version of a formula due to Hardy and Littlewood for the arithmetic function $N_{a,b}(x)$, which counts the number of positive integers of the form $a^{p}b^{q}$ less than or equal to $x$.
\end{abstract}

\section{Introduction}

\noindent Let $a\in\mathbb{N}_{\geq2}$ be a fixed natural number.\\
\noindent Let $N_{a}(x)$ denote the number of natural numbers of the form $a^{p}$ which are smaller or equal to $x$, where $p\in\mathbb{N}_{0}$.\\
\noindent By definition \cite{1,2}, we have that the $2$-smooth numbers are just the powers of $2$, namely
\begin{displaymath}
\begin{split}
S_{2}:=\{1,2,4,8,16,32,64,128,256,512,1024,\ldots\}
\end{split}
\end{displaymath}

\noindent and that the formula for their distribution is
\begin{displaymath}
\begin{split}
N_{2}(x)&=\frac{\log(x)}{\log(2)}+\frac{1}{2}-B_{1}\left(\left\{\frac{\log(x)}{\log(2)}\right\}\right).
\end{split}
\end{displaymath}

\noindent This follows directly from the more general formula
\begin{displaymath}
\begin{split}
N_{a}(x)&=\frac{\log(x)}{\log(a)}+\frac{1}{2}-B_{1}\left(\left\{\frac{\log(x)}{\log(a)}\right\}\right).
\end{split}
\end{displaymath}

\noindent Numbers of the form $2^{p}3^{q}$, where $p\in\mathbb{N}_{0}$ and $q\in\mathbb{N}_{0}$ are called $3$-smooth numbers \cite{1,2,3}, because these numbers are exactly the numbers which have no prime factors larger than $3$. We will denote the sequence of $3$-smooth numbers by $S_{2,3}$. Thus, we have that
\begin{displaymath}
\begin{split}
S_{2,3}:&=\left\{2^{p}3^{q}:p\in\mathbb{N}_{0},q\in\mathbb{N}_{0}\right\}\\
&=\left\{1,2,3,4,6,8,9,12,16,18,24,27,32,36,48,54,64,72,81,96,108,128,144,\ldots\right\}.
\end{split}
\end{displaymath}

\noindent More generally, let $a,b\in\mathbb{N}$ be fixed natural numbers such that $a<b$ and $\gcd(a,b)=1$.\\
\noindent Let $N_{a,b}(x)$ denote the number of natural numbers of the form $a^{p}b^{q}$ which are smaller or equal to $x$, where $p,q\in\mathbb{N}_{0}$. Furthermore, we denote by $\chi_{S_{a,b}}(x)$ the characteristic function of the natural numbers of the form $a^{p}b^{q}$.\\

\noindent In his first letter to Hardy \cite{4,5,6,7}, Ramanujan gave the formula
\begin{displaymath}
\begin{split}
N_{2,3}(x)&\approx\frac{1}{2}\frac{\log(2x)\log(3x)}{\log(2)\log(3)}+\frac{1}{2}\chi_{S_{2,3}}(x),
\end{split}
\end{displaymath}
\noindent which provides a very close approximation to the number $N_{2,3}(x)$ of $3$-smooth numbers less than or equal to $x$.

\noindent In his notebooks \cite{4,7}, Ramanujan later generalized this expression to all $a,b\in\mathbb{N}$ with $\gcd(a,b)=1$, namely
\begin{displaymath}
\begin{split}
N_{a,b}(x)&\approx\frac{1}{2}\frac{\log(ax)\log(bx)}{\log(a)\log(b)}+\frac{1}{2}\chi_{S_{a,b}}(x),
\end{split}
\end{displaymath}
\noindent which is again a very close approximation to $N_{a,b}(x)$.

\noindent The analog formula for $N_{a,b,c}(x)$ \cite{8,9,10} is
\begin{displaymath}
\begin{split}
N_{a,b,c}(x)&\approx\frac{\log\left(x\sqrt{abc}\right)^{3}}{6\log(a)\log(b)\log(c)}+\frac{1}{2}\chi_{S_{a,b,c}}(x),
\end{split}
\end{displaymath}
\noindent which is also a very good approximation to $N_{a,b,c}(x)$.\\

\noindent In the following sections, we present and prove rapidly convergent formulas for the functions $N_{a,b}(x)$ and $N_{2,3}(x)$, having the above Ramanujan approximations as their first term. These two formulas are other versions of a more rapidly convergent formula already found by Hardy and Littlewood around 1920 \cite{11,12}, as it was communicated to us by Emanuele Tron \cite{13}.\\
We also prove very rapidly convergent formulas for the distribution of the $5$-smooth, $7$-smooth and all other smooth numbers.\\
At the end of the paper, we give an exact formula for the counting function of the natural numbers of the form $a^{p^{2}}b^{q^{2}}$.\\
\noindent We have searched all resulting formulas (which are given in theorems and corollaries) in the literature and on the internet, but we could only find the Hardy-Littlewood formula \cite{11,12}. Therefore, we believe that all other results are new.

\section{The Formulas for $N_{a,b}(x)$ and the 3-Smooth Numbers Counting Function $N_{2,3}(x)$}

\noindent Let $a,b\in\mathbb{N}$ such that $a<b$ and $\gcd(a,b)=1$.\\
\noindent For $x\in\mathbb{R}_{0}^{+}$, we define the function $N_{a,b}(x)$ by
\begin{displaymath}
\begin{split}
N_{a,b}(x):&=\sum_{\substack{a^{p}b^{q}\leq x\\p\in\mathbb{N}_{0},q\in\mathbb{N}_{0}}}1.
\end{split}
\end{displaymath}

\noindent Moreover, we denote the set of natural numbers of the form $a^{p}b^{q}$ by $S_{a,b}$ and its characteristic function by $\chi_{S_{a,b}}(x)$, that is
\begin{displaymath}
\begin{split}
S_{a,b}:&=\left\{a^{p}b^{q}:p\in\mathbb{N}_{0},q\in\mathbb{N}_{0}\right\},\\
\chi_{S_{a,b}}(x):&=\begin{cases}1&\text{if $x\in S_{a,b}$}\\0&\text{if $x\notin S_{a,b}$}\end{cases}.
\end{split}
\end{displaymath}

\noindent We have that

\begin{displaymath}
\begin{split}
N_{a,b}(x)&=1+\sum_{k=0}^{\left\lfloor\log_{a}(x)\right\rfloor}\left\lfloor\log_{b}\left(\frac{x}{a^{k}}\right)\right\rfloor+\left\lfloor\log_{a}(x)\right\rfloor.
\end{split}
\end{displaymath}

\begin{theorem}(Our Formula for $N_{a,b}(x)$)\\
\noindent For every real number $x>1$, we have that
\begin{displaymath}
\begin{split}
N_{a,b}(x)&=\frac{1}{2}\frac{\log(ax)\log(bx)}{\log(a)\log(b)}+\frac{\log(a)}{12\log(b)}+\frac{\log(b)}{12\log(a)}-\frac{1}{4}-\frac{1}{2}B^{*}_{1}\left(\left\{\frac{\log(x)}{\log(a)}\right\}\right)\\
&\quad-\frac{1}{2}B^{*}_{1}\left(\left\{\frac{\log(x)}{\log(b)}\right\}\right)-\frac{\log(a)}{2\log(b)}B_{2}\left(\left\{\frac{\log(x)}{\log(a)}\right\}\right)-\frac{\log(b)}{2\log(a)}B_{2}\left(\left\{\frac{\log(x)}{\log(b)}\right\}\right)\\
&\quad+\frac{\log(a)\log(b)}{\pi^{2}}\sum_{n=1}^{\infty}\sum_{m=1}^{\infty}\frac{\cos\left(\frac{2\pi n\log(x)}{\log(a)}\right)-\cos\left(\frac{2\pi m\log(x)}{\log(b)}\right)}{m^{2}\log(a)^{2}-n^{2}\log(b)^{2}}+\frac{1}{2}\chi_{S_{a,b}}(x),
\end{split}
\end{displaymath}

\noindent where
\begin{displaymath}
\begin{split}
B^{*}_{1}(\{x\}):&=\begin{cases}\{x\}-\frac{1}{2}&\text{if $x\notin\mathbb{Z}$}\\0&\text{if $x\in\mathbb{Z}$}\end{cases},\\
B_{2}(\{x\}):&=\{x\}^{2}-\{x\}+\frac{1}{6}\;\;\forall x\in\mathbb{R}.
\end{split}
\end{displaymath}
\end{theorem}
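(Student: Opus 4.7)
The plan is to prove the theorem via Perron's formula followed by a contour shift. Since $\gcd(a,b)=1$, every element of $S_{a,b}$ admits a unique representation $a^{p}b^{q}$, so its Dirichlet generating series factors as
\[
D(s):=\sum_{n\in S_{a,b}}n^{-s}=\frac{1}{(1-a^{-s})(1-b^{-s})},\qquad\Re(s)>0.
\]
Perron's formula then yields, for $x>1$ and any $c>0$,
\[
N_{a,b}(x)=\frac{1}{2\pi i}\int_{c-i\infty}^{c+i\infty}\frac{x^{s}\,ds}{s(1-a^{-s})(1-b^{-s})}+\tfrac{1}{2}\chi_{S_{a,b}}(x),
\]
where the half-jump convention accounts for the final $\tfrac{1}{2}\chi_{S_{a,b}}(x)$ term of the theorem. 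I would then shift the contour toward $\Re(s)\to -\infty$ and collect residues at the triple pole $s=0$ and at the simple poles $s_{n}=2\pi in/\log(a)$ and $t_{m}=2\pi im/\log(b)$ for $n,m\neq 0$.

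At $s=0$, writing $1-a^{-s}=s\log(a)(1-s\log(a)/2+s^{2}\log^{2}(a)/6-\cdots)$ and analogously for $b$, a direct Laurent expansion gives
\[
\mathop{\mathrm{Res}}_{s=0}\frac{x^{s}}{s(1-a^{-s})(1-b^{-s})}=\frac{\log(ax)\log(bx)}{2\log(a)\log(b)}+\frac{\log(a)}{12\log(b)}+\frac{\log(b)}{12\log(a)}-\frac{1}{4},
\]
reproducing the polynomial portion of the formula. The residue at $s_{n}$ equals $e^{2\pi in\log(x)/\log(a)}/\bigl[2\pi in(1-e^{-2\pi in\log(b)/\log(a)})\bigr]$, and analogously at $t_{m}$.

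To convert these oscillatory residues into the shape claimed, I would substitute the identities
\[
\frac{1}{1-e^{-2\pi iz}}=\tfrac{1}{2}-\tfrac{i}{2}\cot(\pi z),\qquad \pi\cot(\pi z)=\frac{1}{z}+2z\sum_{m=1}^{\infty}\frac{1}{z^{2}-m^{2}},
\]
and pair $n\leftrightarrow -n$. The $\tfrac{1}{2}$-piece, combined with the Fourier expansion $\sum_{n\ge 1}\sin(2\pi ny)/(\pi n)=-B_{1}^{*}(\{y\})$, produces the $-\tfrac{1}{2}B_{1}^{*}(\{\log(x)/\log(a)\})$ term; the $1/z$-piece, combined with $\sum_{n\ge 1}\cos(2\pi ny)/n^{2}=\pi^{2}B_{2}(\{y\})$, produces the $-\tfrac{\log(a)}{2\log(b)}B_{2}(\{\log(x)/\log(a)\})$ term; and the residual tail over $m$ supplies the $\cos(2\pi n\log(x)/\log(a))$ half of the double series. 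The $a\leftrightarrow b$ symmetric treatment of the $t_{m}$-residues supplies the corresponding $\{\log(x)/\log(b)\}$ contributions and the $-\cos(2\pi m\log(x)/\log(b))$ half of the double series, completing the match with the theorem.

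The principal obstacle is the rigorous justification of the contour shift. Because $\log(b)/\log(a)$ is irrational (an immediate consequence of $\gcd(a,b)=1$ with $a,b\ge 2$), the factors $1-e^{-2\pi in\log(b)/\log(a)}$ can be arbitrarily small, so one cannot simply bound the integrand by absolute values. I would work with a truncated Perron formula, close the contour at heights $\pm T_{k}\to\infty$ chosen via Dirichlet's approximation theorem so as to avoid accumulations of small denominators, and verify that the vertical segment at $\Re(s)=-R$ contributes vanishingly as $R\to\infty$. A secondary subtlety is that the resulting double series is only conditionally convergent; the derivation naturally selects the iterated order (``first $m$, then $n$'' from the cotangent partial fraction), and a brief remark would be needed to reconcile that order with the symmetric display in the theorem statement.
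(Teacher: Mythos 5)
Your proposal is correct, and its core engine --- the Dirichlet series $1/((1-a^{-s})(1-b^{-s}))$, Perron's formula with the half-jump convention, and the pairing of residues at $s=\pm 2\pi i k/\log(a)$ and $s=\pm 2\pi i k/\log(b)$ --- is exactly the machinery the paper uses (it carries out this computation explicitly in the proof of its later corollary giving the cotangent form of the Hardy--Littlewood formula, and in the general $n$-variable theorem). The one substantive step you add is the conversion from the cotangent form to the form actually asserted in this theorem: expanding $\cot(\pi n\log(b)/\log(a))$ by the partial-fraction series $\pi\cot(\pi z)=1/z+2z\sum_{m\geq 1}(z^{2}-m^{2})^{-1}$, so that the $1/z$ term combines with $\sum_{n\geq1}\cos(2\pi ny)/n^{2}=\pi^{2}B_{2}(\{y\})$ to produce the $B_{2}$ corrections and the tail produces the double series. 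The paper never writes this step down --- it simply declares the present theorem to be ``another version'' of the Hardy--Littlewood formula --- so your proposal supplies the link the paper leaves implicit; I checked the bookkeeping (the $a$-residues yield the $+\cos(2\pi n\log(x)/\log(a))$ half of the double sum and the $b$-residues the $-\cos(2\pi m\log(x)/\log(b))$ half, with the common denominator $m^{2}\log(a)^{2}-n^{2}\log(b)^{2}$) and it matches. You are also more careful than the paper on the two analytic points that genuinely need attention: the justification of the contour shift despite the small denominators $1-e^{-2\pi ik\log(b)/\log(a)}$, and the fact that the resulting double series is only conditionally convergent so the order of summation inherited from the partial-fraction expansion must be specified; the paper silently ignores both.
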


\noindent The above formula converges rapidly.\\
\noindent As usual we denote by $\{x\}$ the fractional part of $x$.\\

\noindent This is just another version of the following

\begin{theorem}(The Hardy-Littlewood formula for $N_{a,b}(x)$)\cite{11,12}\\
\noindent For every real number $x\geq1$, we have that
\begin{displaymath}
\begin{split}
N_{a,b}(x)&=\frac{1}{2}\frac{\log(ax)\log(bx)}{\log(a)\log(b)}+\frac{\log(a)}{12\log(b)}+\frac{\log(b)}{12\log(a)}-\frac{1}{4}-B^{*}_{1}\left(\left\{\frac{\log(x)}{\log(a)}\right\}\right)-B^{*}_{1}\left(\left\{\frac{\log(x)}{\log(b)}\right\}\right)\\
&\quad-\frac{1}{2\pi}\sum_{k=1}^{\infty}\left(\frac{\cos\left(2\pi k\frac{\log(x)-\frac{1}{2}\log(b)}{\log(a)}\right)}{k\sin\left(\frac{\pi k\log(b)}{\log(a)}\right)}+\frac{\cos\left(2\pi k\frac{\log(x)-\frac{1}{2}\log(a)}{\log(b)}\right)}{k\sin\left(\frac{\pi k\log(a)}{\log(b)}\right)}\right)+\frac{1}{2}\chi_{S_{a,b}}(x),\end{split}
\end{displaymath}
\noindent where the series is to be interpreted as meaning \cite{12}
\begin{displaymath}
\begin{split}
&\sum_{k=1}^{\infty}\left(\frac{\cos\left(2\pi k\frac{\log(x)-\frac{1}{2}\log(b)}{\log(a)}\right)}{k\sin\left(\frac{\pi k\log(b)}{\log(a)}\right)}+\frac{\cos\left(2\pi k\frac{\log(x)-\frac{1}{2}\log(a)}{\log(b)}\right)}{k\sin\left(\frac{\pi k\log(a)}{\log(b)}\right)}\right)\\
=&\lim_{R\rightarrow\infty}\left(\sum_{k=1}^{\left\lfloor R\log(a)\right\rfloor}\frac{\cos\left(2\pi k\frac{\log(x)-\frac{1}{2}\log(b)}{\log(a)}\right)}{k\sin\left(\frac{\pi k\log(b)}{\log(a)}\right)}+\sum_{k=1}^{\left\lfloor R\log(b)\right\rfloor}\frac{\cos\left(2\pi k\frac{\log(x)-\frac{1}{2}\log(a)}{\log(b)}\right)}{k\sin\left(\frac{\pi k\log(a)}{\log(b)}\right)}\right),
\end{split}
\end{displaymath}
\noindent when $R\rightarrow\infty$ in an appropriate manner.
\end{theorem}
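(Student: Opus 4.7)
My plan is to prove Theorem~2 directly from the elementary counting identity
$N_{a,b}(x) = 1 + \lfloor\log_{a}x\rfloor + \sum_{k=0}^{\lfloor\log_{a}x\rfloor}\lfloor\log_{b}(x/a^{k})\rfloor$
displayed just before Theorem~1, by applying $\lfloor y\rfloor = y - \tfrac{1}{2} - B^{*}_{1}(\{y\})$ to every floor and processing the resulting sawtooth sums via their Fourier expansions. This decomposes $N_{a,b}(x)$ into a smooth polynomial-in-$\log x$ piece and a fluctuating piece made of $B^{*}_{1}$-sawteeth.

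The smooth piece is a short algebraic computation. With $K = \lfloor\log_{a}x\rfloor$ and $s = \{\log_{a}x\}$, the linear-in-$k$ sum $\sum_{k=0}^{K}(\log_{b}x - k\log a/\log b - \tfrac{1}{2})$ telescopes to a quadratic in $(K,\log_{b}x)$. Substituting $K = \log_{a}x - s$ and then using the Bernoulli identities $s(1-s) = \tfrac{1}{6} - B_{2}(s)$ and $s = \tfrac{1}{2} + B^{*}_{1}(s)$ (valid for $s \notin \mathbb{Z}$) produces the principal term $\tfrac{1}{2}\log(ax)\log(bx)/(\log a\log b)$ together with the constant $\log(a)/(12\log b) - \tfrac{1}{4}$, a $B_{2}(\{\log_{a}x\})$ contribution and a partial $B^{*}_{1}(\{\log_{a}x\})$ term. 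The missing symmetric partners $\log(b)/(12\log a)$, $B_{2}(\{\log_{b}x\})$, and the remaining $B^{*}_{1}$ coefficient will emerge from the Fourier manipulation of the sawtooth piece.

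For the sawtooth piece $-B^{*}_{1}(\{\log_{a}x\}) - \sum_{k=0}^{K}B^{*}_{1}(\{\log_{b}(x/a^{k})\})$ I would insert the Fourier series $B^{*}_{1}(\{y\}) = -\tfrac{1}{\pi}\sum_{n\geq 1}\sin(2\pi ny)/n$, swap the two sums, and evaluate the inner sine-of-arithmetic-progression in closed form via the Dirichlet-kernel identity $\sum_{k=0}^{K}\sin(\psi+k\omega) = \sin(\psi+K\omega/2)\sin((K+1)\omega/2)/\sin(\omega/2)$. A product-to-sum step followed by the identity $\cos(A+B) = \cos(A-B) - 2\sin A\sin B$ converts one of the resulting cosine series into the second Hardy--Littlewood series of Theorem~2, while the $-2\sin A\sin B$ correction resums --- through $B^{*}_{1}$'s own Fourier series applied once more --- into the extra $-\tfrac{1}{2}B^{*}_{1}(\{\log_{b}x\})$ needed to upgrade the sawtooth coefficient from $-\tfrac{1}{2}$ to the full $-1$. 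The remaining cosine summand, together with the companion series obtained by running the same derivation starting from the mirror identity $N_{a,b}(x) = 1 + \lfloor\log_{b}x\rfloor + \sum_{l=0}^{\lfloor\log_{b}x\rfloor}\lfloor\log_{a}(x/b^{l})\rfloor$ and averaging the two representations, supplies the first Hardy--Littlewood series, the remaining half of $B^{*}_{1}(\{\log_{a}x\})$, and cancels the residual $B_{2}$ and $\log(a)/(12\log b)$-type terms coming from the smooth part. The $\tfrac{1}{2}\chi_{S_{a,b}}(x)$ correction encodes the fact that $B^{*}_{1}$ equals $0$ at integer arguments while $\{\cdot\} - \tfrac{1}{2}$ equals $-\tfrac{1}{2}$ there, so that the floor-expansion undercounts by $\tfrac{1}{2}$ at each $k$ for which $\log_{b}(x/a^{k})\in\mathbb{Z}$.

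The main obstacle will be convergence. Both the Fourier series for $B^{*}_{1}$ and the trigonometric double series produced after interchange are only conditionally convergent, so the order of summation cannot be exchanged freely. I would handle this by truncating the Fourier series for $B^{*}_{1}$ at frequency $n \leq N$, carrying every step out at finite order where Fubini is available, and only at the end letting $N \to \infty$ under the symmetric cut-offs $k \leq \lfloor R\log a\rfloor$ and $k \leq \lfloor R\log b\rfloor$ prescribed in the statement --- precisely the unique scheme under which the various reorderings yield a common limit. Checking that the residual truncation error vanishes in this symmetric limit, and that the boundary discrepancies at integer argument points aggregate exactly into the advertised $\tfrac{1}{2}\chi_{S_{a,b}}(x)$, is the technical crux of the proof.
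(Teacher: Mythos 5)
Your route is genuinely different from the paper's: the paper never expands the floor-sum identity at all, but applies Perron's formula to $\sum_{k\geq 1}\chi_{S_{a,b}}(k)k^{-s}=(1-a^{-s})^{-1}(1-b^{-s})^{-1}$ and sums the residues of $x^{s}/\bigl(s(1-a^{-s})(1-b^{-s})\bigr)$; the triple pole at $s=0$ gives the polynomial part and the two families of simple poles at $s=\pm 2\pi ik/\log a$ and $s=\pm 2\pi ik/\log b$ give the two cosine series simultaneously and on an equal footing (this is the proof written out for the equivalent Corollary in Section 5). The elementary half of your plan is sound: the smooth part does come out to $\tfrac12\log(ax)\log(bx)/(\log a\log b)+\tfrac{\log a}{12\log b}-\tfrac14-\tfrac12 B_{1}^{*}(\{\log_{a}x\})-\tfrac{\log a}{2\log b}B_{2}(\{\log_{a}x\})$, and the Dirichlet-kernel step applied to $-\sum_{k=0}^{K}B_{1}^{*}(\{\log_{b}(x/a^{k})\})$ does produce the base-$b$ Hardy--Littlewood series $-\tfrac{1}{2\pi}\sum_{n\geq1}\cos\bigl(2\pi n\tfrac{\log x-\frac12\log a}{\log b}\bigr)/\bigl(n\sin(\pi n\log a/\log b)\bigr)$ together with a full $-B_{1}^{*}(\{\log_{b}x\})$ (not the half you state).

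The genuine gap is at the decisive step. The product-to-sum manipulation also leaves behind the series $T=\sum_{n\geq1}\cos\bigl(\pi n\theta(1-2s)\bigr)/\bigl(n\sin(\pi n\theta)\bigr)$ with $\theta=\log a/\log b$ and $s=\{\log_{a}x\}$ (the $A-B$ frequency collapses to $\pi n\theta(1-2s)$ because $K=\log_{a}x-s$), and matching your representation against Theorem~2 forces the identity
\[
\frac{1}{2\pi}\,T\;=\;\frac{\log b}{12\log a}-\frac12 B_{1}^{*}(s)+\frac{\log a}{2\log b}B_{2}(s)-\frac{1}{2\pi}\sum_{k=1}^{\infty}\frac{\cos\left(2\pi k\frac{\log x-\frac12\log b}{\log a}\right)}{k\sin\left(\frac{\pi k\log b}{\log a}\right)},
\]
a reciprocity law exchanging the modulus $\log a/\log b$ inside the cosecant denominators for $\log b/\log a$. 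None of the tools you list (Fourier series of $B_{1}^{*}$, Dirichlet kernel, $\cos(A+B)=\cos(A-B)-2\sin A\sin B$) can effect that exchange, and averaging with the mirror representation obtained by summing over powers of $b$ first only symmetrizes the same unproved identity rather than eliminating it. This is not the convergence technicality you flag at the end; it is a missing formal identity, and it is essentially the whole analytic content of the Hardy--Littlewood paper cited as [12]. Tellingly, the $B_{2}$ terms and the $-\tfrac12 B_{1}^{*}$ coefficients your expansion generates are exactly the ones that survive in Theorem~1 but are absent from Theorem~2: absorbing them into the second cosine series is precisely what the missing reciprocity does. To close the argument you must either prove that reciprocity independently (e.g.\ by a Mittag--Leffler expansion of $1/\sin$ and a resummation, or by contour integration) or adopt the paper's residue calculus, in which both families of poles contribute their series at once and no modulus exchange is ever needed.
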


\noindent This formula converges very rapidly.\\

\noindent Setting $a=2$ and $b=3$, we get immediately two formulas for the distribution of the $3$-smooth numbers, namely

\begin{corollary}(Our Formula for the $3$-Smooth Numbers Counting Function $N_{2,3}(x)$)\\
\noindent For every real number $x>1$, we have that
\begin{displaymath}
\begin{split}
N_{2,3}(x)&=\frac{1}{2}\frac{\log(2x)\log(3x)}{\log(2)\log(3)}+\frac{\log(2)}{12\log(3)}+\frac{\log(3)}{12\log(2)}-\frac{1}{4}-\frac{1}{2}B^{*}_{1}\left(\left\{\frac{\log(x)}{\log(2)}\right\}\right)\\
&\quad-\frac{1}{2}B^{*}_{1}\left(\left\{\frac{\log(x)}{\log(3)}\right\}\right)-\frac{\log(2)}{2\log(3)}B_{2}\left(\left\{\frac{\log(x)}{\log(2)}\right\}\right)-\frac{\log(3)}{2\log(2)}B_{2}\left(\left\{\frac{\log(x)}{\log(3)}\right\}\right)\\
&\quad+\frac{\log(2)\log(3)}{\pi^{2}}\sum_{n=1}^{\infty}\sum_{m=1}^{\infty}\frac{\cos\left(\frac{2\pi n\log(x)}{\log(2)}\right)-\cos\left(\frac{2\pi m\log(x)}{\log(3)}\right)}{m^{2}\log(2)^{2}-n^{2}\log(3)^{2}}+\frac{1}{2}\chi_{S_{2,3}}(x).
\end{split}
\end{displaymath}
\end{corollary}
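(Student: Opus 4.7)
The statement is framed as an immediate specialization of Theorem 1 with $a=2,b=3$, so the plan is just to invoke Theorem 1 after checking the hypotheses $a<b$ and $\gcd(a,b)=1$, which trivially hold. Every $\log(a)$ becomes $\log(2)$ and every $\log(b)$ becomes $\log(3)$; the characteristic function becomes $\chi_{S_{2,3}}$; the double Fourier sum carries over without change. Because this specialization is formal, the real question is how one would prove Theorem 1 itself, and I sketch that below, after which Corollary 3 is a one-line consequence.

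I would begin with the exact finite identity
\[
N_{a,b}(x) = 1 + \lfloor\log_a(x)\rfloor + \sum_{k=0}^{\lfloor\log_a(x)\rfloor} \lfloor\log_b(x/a^k)\rfloor
\]
stated just before Theorem 1, and replace every floor by $t-\tfrac12+B_1^*(\{t\})$. The ``polynomial'' pieces $t-\tfrac12$, after summing the arithmetic series in $k$, already produce Ramanujan's main term $\tfrac12\log(ax)\log(bx)/(\log a\log b)$. The remaining $B_1^*$ terms are then unfolded using the Fourier series $B_1^*(\{t\})=-\sum_{m\geq 1}\sin(2\pi m t)/(\pi m)$, reducing the problem to evaluating the geometric-type sum $\sum_{k=0}^{\lfloor\log_a x\rfloor}\sin(2\pi m(\log x-k\log a)/\log b)$. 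Applying Euler--Maclaurin (splitting this $k$-sum into an integral part, two endpoint values, and a $B_2$-weighted derivative remainder) converts the integral part into the double cosine series in the statement; the endpoint $k=\lfloor\log_a x\rfloor$ contribution, combined with a second Fourier expansion of $B_1^*(\{\log_a x\})$, yields the two $-\tfrac12 B_1^*$ terms together with the $\tfrac12\chi_{S_{a,b}}(x)$ correction at the exceptional integer points; and the derivative remainder, together with a bookkeeping of $B_2(0)=\tfrac16$, produces both the two $B_2$ terms and the explicit constants $\log(a)/(12\log b)$, $\log(b)/(12\log a)$, $-\tfrac14$.

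The main technical obstacle is the ``small denominator'' factor $m^{2}\log(a)^{2}-n^{2}\log(b)^{2}$ in the double series. It never vanishes, because $\gcd(a,b)=1$ forces $a^{q}\neq b^{p}$ for any nonzero $(p,q)$ by unique factorization, and hence $\log(a)/\log(b)$ is irrational; but it can become arbitrarily small, so the double series converges only conditionally, and one has to pin down the order of summation carefully and justify interchanging it with the finite $k$-sum that arose from the original $N_{a,b}(x)$ identity. This is exactly the point where Hardy and Littlewood's formulation (Theorem 2) inserts the explicit $R\to\infty$ prescription; the reformulation in Theorem 1 trades that prescription for a double-series format, and making that trade rigorous, while showing that the reshuffled series still equals the Hardy--Littlewood sum plus the stated $B_2$-corrections, is where the real work of the proof lives.
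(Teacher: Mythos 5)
Your handling of the corollary itself --- substitute $a=2$, $b=3$ into Theorem 1 after noting $2<3$ and $\gcd(2,3)=1$ --- is exactly what the paper does; the corollary carries no independent content beyond that specialization. Where you diverge is in your sketch of Theorem 1, and it is worth recording that the paper never actually proves Theorem 1: the only two-variable result it makes rigorous is the Hardy--Littlewood form (the corollary in Section 5), which it derives by applying Perron's formula to $\sum_{k}\chi_{S_{a,b}}(k)k^{-s}=(1-a^{-s})^{-1}(1-b^{-s})^{-1}$ and summing the residues at $s=0$ and at $s=\pm\tfrac{2\pi ik}{\log a}$, $s=\pm\tfrac{2\pi ik}{\log b}$; the equivalence of that expression with the double-cosine-series form of Theorem 1 is asserted ("just another version") but never demonstrated. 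Your proposed route --- start from the exact floor-sum identity, expand each floor via $\lfloor t\rfloor=t-\tfrac12-B^{*}_{1}(\{t\})$ (note the sign: you wrote $+B^{*}_{1}$, but with the paper's convention $B^{*}_{1}(\{t\})=\{t\}-\tfrac12$ the floor is $t-\tfrac12-B^{*}_{1}(\{t\})$), Fourier-expand $B^{*}_{1}$, and control the resulting exponential sums over $k$ by Euler--Maclaurin --- is genuinely different and more elementary, avoiding contour integration entirely; its cost is that the $B_{2}$ terms and the constants $\tfrac{\log a}{12\log b}+\tfrac{\log b}{12\log a}-\tfrac14$ must be extracted by hand from the Euler--Maclaurin remainders, and, as you correctly identify, the conditionally convergent double series with small denominators $m^{2}\log(a)^{2}-n^{2}\log(b)^{2}$ requires a precise specification of the order of summation (the analogue of the $R\to\infty$ prescription attached to Theorem 2, which Theorem 1 conspicuously omits). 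That last point is the genuinely delicate one on either route, and since the paper leaves Theorem 1 unproved, your sketch --- once the sign is fixed and the summation order pinned down --- would supply something the paper itself does not.
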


\begin{corollary}(The Hardy-Littlewood formula for $N_{2,3}(x)$)\cite{11,12}\\\noindent For every real number $x\geq1$, we have that
\begin{displaymath}
\begin{split}
N_{2,3}(x)&=\frac{1}{2}\frac{\log(2x)\log(3x)}{\log(2)\log(3)}+\frac{\log(2)}{12\log(3)}+\frac{\log(3)}{12\log(2)}-\frac{1}{4}-B^{*}_{1}\left(\left\{\frac{\log(x)}{\log(2)}\right\}\right)-B^{*}_{1}\left(\left\{\frac{\log(x)}{\log(3)}\right\}\right)\\
&\quad-\frac{1}{2\pi}\sum_{k=1}^{\infty}\left(\frac{\cos\left(2\pi k\frac{\log(x)-\frac{1}{2}\log(3)}{\log(2)}\right)}{k\sin\left(\frac{\pi k\log(3)}{\log(2)}\right)}+\frac{\cos\left(2\pi k\frac{\log(x)-\frac{1}{2}\log(2)}{\log(3)}\right)}{k\sin\left(\frac{\pi k\log(2)}{\log(3)}\right)}\right)+\frac{1}{2}\chi_{S_{2,3}}(x),
\end{split}
\end{displaymath}
\noindent where the series is to be interpreted as meaning \cite{12}
\begin{displaymath}
\begin{split}
&\sum_{k=1}^{\infty}\left(\frac{\cos\left(2\pi k\frac{\log(x)-\frac{1}{2}\log(3)}{\log(2)}\right)}{k\sin\left(\frac{\pi k\log(3)}{\log(2)}\right)}+\frac{\cos\left(2\pi k\frac{\log(x)-\frac{1}{2}\log(2)}{\log(3)}\right)}{k\sin\left(\frac{\pi k\log(2)}{\log(3)}\right)}\right)\\
=&\lim_{R\rightarrow\infty}\left(\sum_{k=1}^{\left\lfloor R\log(2)\right\rfloor}\frac{\cos\left(2\pi k\frac{\log(x)-\frac{1}{2}\log(3)}{\log(2)}\right)}{k\sin\left(\frac{\pi k\log(3)}{\log(2)}\right)}+\sum_{k=1}^{\left\lfloor R\log(3)\right\rfloor}\frac{\cos\left(2\pi k\frac{\log(x)-\frac{1}{2}\log(2)}{\log(3)}\right)}{k\sin\left(\frac{\pi k\log(2)}{\log(3)}\right)}\right),
\end{split}
\end{displaymath}
\noindent when $R\rightarrow\infty$ in an appropriate manner.
\end{corollary}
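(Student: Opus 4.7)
The plan is to obtain Corollary 4 as an immediate specialization of Theorem 2, by setting $a=2$ and $b=3$. Since $2<3$ and $\gcd(2,3)=1$, the hypotheses of Theorem 2 are satisfied, so its formula applies to $N_{2,3}(x)$ without modification.

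I would proceed in three short transcription steps. First, substitute $a=2,b=3$ into the ``polynomial-in-$\log x$'' block of Theorem 2 to obtain $\tfrac{1}{2}\tfrac{\log(2x)\log(3x)}{\log(2)\log(3)}+\tfrac{\log(2)}{12\log(3)}+\tfrac{\log(3)}{12\log(2)}-\tfrac{1}{4}$, matching the statement exactly. Second, rewrite the two boundary-correction terms as $-B^{*}_{1}(\{\log(x)/\log(2)\})-B^{*}_{1}(\{\log(x)/\log(3)\})$ and carry the $\tfrac{1}{2}\chi_{S_{2,3}}(x)$ indicator over verbatim. Third, transcribe the Hardy--Littlewood oscillatory series: the summand coming from $\log(a),\log(b)$ becomes the one in the corollary under the substitution $\log(a)\mapsto\log(2),\log(b)\mapsto\log(3)$, and the symmetric truncation prescribed by Theorem~2, namely $k\le\lfloor R\log(a)\rfloor$ and $k\le\lfloor R\log(b)\rfloor$, specializes to $k\le\lfloor R\log(2)\rfloor$ and $k\le\lfloor R\log(3)\rfloor$, which is exactly the interpretation given in the statement.

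The only point requiring verification is that each individual summand of the oscillatory series is actually well-defined at $(a,b)=(2,3)$, i.e.\ that the sines $\sin(\pi k\log(3)/\log(2))$ and $\sin(\pi k\log(2)/\log(3))$ are nonzero for every $k\ge 1$. This reduces to the irrationality of $\log(3)/\log(2)$, equivalently the nonexistence of positive integers $m,n$ with $2^{m}=3^{n}$, which follows from the fundamental theorem of arithmetic. Once this is noted, there is nothing further to prove: the corollary is literally Theorem~2 evaluated at the admissible pair $(2,3)$, so no new estimates, no new convergence argument, and no auxiliary lemmas are needed.
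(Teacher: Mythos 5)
Your proposal is correct and matches the paper exactly: the paper derives this corollary by the single remark ``Setting $a=2$ and $b=3$, we get immediately two formulas...'', i.e.\ by direct specialization of Theorem~2 to the admissible pair $(2,3)$. Your added observation that the denominators $\sin(\pi k\log(3)/\log(2))$ and $\sin(\pi k\log(2)/\log(3))$ never vanish, because $\log(3)/\log(2)$ is irrational, is a sensible extra check that the paper leaves implicit.
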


\noindent Using the computationally more efficient formula
\begin{displaymath}
\begin{split}
N_{2,3}(x)&=1+\sum_{k=0}^{\left\lfloor\log_{3}(x)\right\rfloor}\left\lfloor\log_{2}\left(\frac{x}{3^{k}}\right)\right\rfloor+\left\lfloor\log_{3}(x)\right\rfloor,
\end{split}
\end{displaymath}
\noindent we get the following two tables:\\
 
\begin{table}[htbp]
\begin{center}
\begin{tabular}{| c | c | c | c |}
\hline
$x$&$N_{2,3}(x)$&$\text{Our Formula for }N_{2,3}(x)$&$\text{Number of terms $(n,m)$}$\\
\hline
\bfseries $1$&$1$&$1.0510201857955517$&$(n,m)=(4,4)\text{\;at $x=1.1$}$\\
\hline
\bfseries $10$&$7$&$7.0071497373839231$&$(n,m)=(22,22)$\\
\hline
\bfseries $10^{2}$&$20$&$20.0045160354084706$&$(n,m)=(10,10)$\\
\hline
\bfseries $10^{3}$&$40$&40.0039084310672772&$(n,m)=(12,12)$\\
\hline
\bfseries $10^{4}$&$67$&$67.0408408937206653
$&$(n,m)=(20,20)$\\
\hline
\bfseries $10^{5}$&$101$&$101.05072154439969785$&$(n,m)=(28,28)$\\
\hline
\bfseries $10^{6}$&$142$&$142.01315000789587358$&$(n,m)=(70,70)$\\
\hline
\bfseries $10^{7}$&$190$&$190.00707389223323501$&$(n,m)=(110,110)$\\
\hline
\bfseries $10^{8}$&$244$&$244.00659912032029415$&$(n,m)=(140,140)$\\
\hline
\bfseries $10^{9}$&$306$&$306.00585869480145596$&$(n,m)=(160,160)$\\
\hline
\bfseries $10^{10}$&$376$&$376.02126583465866742$&$(n,m)=(170,170)$\\
\hline
\bfseries $10^{10^{2}}$&$35084$&$35084.0568926232894816675$&$(n,m)=(2000,2000)$\\
\hline
\bfseries $10^{10^{3}}$&$3483931$&$3483931.035272714689991309386$&$(n,m)=(4000,4000)$\\
\hline
\end{tabular}
\caption{Values of $N_{2,3}(x)$}\end{center}
\end{table}

\newpage

\begin{table}[htbp]
\begin{center}
\begin{tabular}{| c | c | c | c |}
\hline
$x$&$N_{2,3}(x)$&$\text{The Hardy-Littlewood Formula for }N_{2,3}(x)$&$\text{Number of terms $R$}$\\
\hline
\bfseries $1$&$1$&$1.00408281281244794423184044310637662236$&$R=1$\\
\hline
\bfseries $10$&$7$&$7.01039536792580652845911613960427072715$&$R=6$\\
\hline
\bfseries $10^{2}$&$20$&$20.00554687989157075178992137362449803027$&$R=10$\\
\hline
\bfseries $10^{3}$&$40$&$40.00416733658863125098651349198857667561$&$R=26$\\
\hline
\bfseries $10^{4}$&$67$&$67.04067163854917851848072234444363738009$&$R=32$\\
\hline
\bfseries $10^{5}$&$101$&$101.00383710643693392983460661037688277109$&$R=44$\\
\hline
\bfseries $10^{6}$&$142$&$142.00519665851176957826409909346411626717$&$R=60$\\
\hline
\bfseries $10^{7}$&$190$&$190.00431172466646336030921684292744206625$&$R=100$\\
\hline
\bfseries $10^{8}$&$244$&$244.00043300366963526250817238561664826018$&$R=122$\\
\hline
\bfseries $10^{9}$&$306$&$306.00450681431786167717856515798365069396$&$R=146$\\
\hline
\bfseries $10^{10}$&$376$&$376.02231447192801988487484982661961706561$&$R=160$\\
\hline
\bfseries $10^{10^{2}}$&$35084$&$35084.03451234481158685735036751788214481906$&$R=3000$\\
\hline
\bfseries $10^{10^{3}}$&$3483931$&$3483931.03067546896021243171738747049589388966$&$R=3000$\\
\hline
\bfseries $10^{10^{4}}$&$348149087$&$348149087.05625852937187129720297862230958308491$&$R=24000$\\
\hline
\bfseries $10^{10^{5}}$&$34812470748$&$34812470748.06400873722492550333469431071713138958$&$R=200000$\\
\hline
\end{tabular}
\caption{Values of $N_{2,3}(x)$}\end{center}
\end{table}

\begin{corollary}(Modified version of our formula for $N_{a,b}(x)$)\\
\noindent For every real number $x>1$, we have
\begin{displaymath}
\begin{split}
N_{a,b}(x)&=\frac{\log(x)^{2}}{2\log(a)\log(b)}+\frac{\log(x)}{2\log(a)}+\frac{\log(x)}{2\log(b)}+\frac{1}{4}+\frac{\log(a)}{12\log(b)}+\frac{\log(b)}{12\log(a)}-\frac{1}{2}B^{*}_{1}\left(\left\{\frac{\log(x)}{\log(a)}\right\}\right)\\
&\quad-\frac{1}{2}B^{*}_{1}\left(\left\{\frac{\log(x)}{\log(b)}\right\}\right)-\frac{\log(a)}{2\log(b)}B_{2}\left(\left\{\frac{\log(x)}{\log(a)}\right\}\right)-\frac{\log(b)}{2\log(a)}B_{2}\left(\left\{\frac{\log(x)}{\log(b)}\right\}\right)\\
&\quad+\frac{\log(a)\log(b)}{\pi^{2}}\sum_{n=1}^{\infty}\sum_{m=1}^{\infty}\frac{\cos\left(\frac{2\pi n\log(x)}{\log(a)}\right)-\cos\left(\frac{2\pi m\log(x)}{\log(b)}\right)}{m^{2}\log(a)^{2}-n^{2}\log(b)^{2}}+\frac{1}{2}\chi_{S_{a,b}}(x).
\end{split}
\end{displaymath}
\end{corollary}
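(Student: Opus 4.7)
The plan is simply to start from the formula of Theorem 1 and rewrite the ``Ramanujan term'' $\frac{1}{2}\frac{\log(ax)\log(bx)}{\log(a)\log(b)}$ in expanded form; all the remaining terms (the Bernoulli correction terms, the double cosine series, and the characteristic function) are identical in the two statements, so no analytic work is required.

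First I would use $\log(ax)=\log(a)+\log(x)$ and $\log(bx)=\log(b)+\log(x)$ to expand
\begin{displaymath}
\log(ax)\log(bx)=\log(a)\log(b)+\log(a)\log(x)+\log(b)\log(x)+\log(x)^{2}.
\end{displaymath}
Dividing by $2\log(a)\log(b)$ gives
\begin{displaymath}
\frac{1}{2}\frac{\log(ax)\log(bx)}{\log(a)\log(b)}
=\frac{\log(x)^{2}}{2\log(a)\log(b)}+\frac{\log(x)}{2\log(a)}+\frac{\log(x)}{2\log(b)}+\frac{1}{2}.
\end{displaymath}
Next I would substitute this expansion into the formula of Theorem 1 and absorb the constant $-\frac{1}{4}$, so that $\frac{1}{2}-\frac{1}{4}=\frac{1}{4}$ becomes the new constant. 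This produces exactly the first line of the claimed identity, while the remaining lines (the $B_{1}^{*}$, $B_{2}$, double series, and $\chi_{S_{a,b}}$ terms) are transcribed verbatim from Theorem 1.

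There is really no obstacle here; the only thing worth double-checking is the bookkeeping of the constant term ($\tfrac{1}{2}-\tfrac{1}{4}=\tfrac{1}{4}$) and the fact that the terms $\tfrac{\log(a)}{12\log(b)}+\tfrac{\log(b)}{12\log(a)}$ carry through unchanged. Thus the corollary is an immediate algebraic reformulation of Theorem 1, valid on the same domain $x>1$.
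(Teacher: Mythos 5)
Your proposal is correct and matches the paper's (implicit) derivation: the corollary is stated without separate proof precisely because it follows from Theorem 1 by expanding $\log(ax)\log(bx)=(\log a+\log x)(\log b+\log x)$ and combining the resulting constant $\tfrac12$ with the $-\tfrac14$ already present. Your bookkeeping of the constant term and the unchanged correction terms is exactly right.
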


\begin{corollary}(Modified Hardy-Littlewood formula for $N_{a,b}(x)$)\cite{11,12}\\
\noindent For every real number $x\geq1$, we have
\begin{displaymath}
\begin{split}
N_{a,b}(x)&=\frac{\log(x)^{2}}{2\log(a)\log(b)}+\frac{\log(x)}{2\log(a)}+\frac{\log(x)}{2\log(b)}+\frac{1}{4}+\frac{\log(a)}{12\log(b)}+\frac{\log(b)}{12\log(a)}-B^{*}_{1}\left(\left\{\frac{\log(x)}{\log(a)}\right\}\right)\\
&\quad-B^{*}_{1}\left(\left\{\frac{\log(x)}{\log(b)}\right\}\right)-\frac{1}{2\pi}\sum_{k=1}^{\infty}\left(\frac{\cos\left(2\pi k\frac{\log(x)-\frac{1}{2}\log(a)}{\log(b)}\right)}{k\sin\left(\frac{\pi k\log(a)}{\log(b)}\right)}+\frac{\cos\left(2\pi k\frac{\log(x)-\frac{1}{2}\log(b)}{\log(a)}\right)}{k\sin\left(\frac{\pi k\log(b)}{\log(a)}\right)}\right)+\frac{1}{2}\chi_{S_{a,b}}(x),
\end{split}
\end{displaymath}
\noindent where the series is interpreted as mentioned above.
\end{corollary}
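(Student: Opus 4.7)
The plan is to derive Corollary 6 directly from Theorem 2 by an elementary algebraic rearrangement of the leading polynomial-in-$\log(x)$ term; no additional analytic input is required. A side-by-side comparison shows that the $B^{*}_{1}$ contributions, the doubly-infinite oscillatory series, the characteristic function $\chi_{S_{a,b}}(x)$, and the constants $\frac{\log(a)}{12\log(b)}+\frac{\log(b)}{12\log(a)}$ appear in both formulas with identical coefficients, so it suffices to verify the single identity
\[
\frac{1}{2}\,\frac{\log(ax)\log(bx)}{\log(a)\log(b)} - \frac{1}{4}
=
\frac{\log(x)^{2}}{2\log(a)\log(b)} + \frac{\log(x)}{2\log(a)} + \frac{\log(x)}{2\log(b)} + \frac{1}{4}.
\]

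To establish this identity I would substitute $\log(ax)=\log(a)+\log(x)$ and $\log(bx)=\log(b)+\log(x)$ into the left-hand side, expand the product as
\[
\log(ax)\log(bx) = \log(a)\log(b) + \log(x)\bigl(\log(a)+\log(b)\bigr) + \log(x)^{2},
\]
and divide through by $2\log(a)\log(b)$. This produces
\[
\tfrac{1}{2} + \tfrac{\log(x)}{2\log(b)} + \tfrac{\log(x)}{2\log(a)} + \tfrac{\log(x)^{2}}{2\log(a)\log(b)};
\]
subtracting the $\tfrac{1}{4}$ from Theorem 2 converts the constant $\tfrac{1}{2}$ into $\tfrac{1}{4}$, which is exactly the constant appearing in the modified formula, and the linear and quadratic contributions in $\log(x)$ match those of Corollary 6 by inspection.

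There is essentially no obstacle. The only point that deserves mention is that, since Theorem 2 and Corollary 6 share the same doubly-infinite cosine sum, this sum must continue to be interpreted as the limit over truncation indices $\lfloor R\log(a)\rfloor$ and $\lfloor R\log(b)\rfloor$ as $R\to\infty$ in an appropriate manner; the meaning is inherited unchanged from Theorem 2. With the algebraic identity above verified and every remaining term manifestly identical in the two statements, the equivalence of the formulas is immediate, and Corollary 6 follows as a direct restatement of the Hardy-Littlewood formula.
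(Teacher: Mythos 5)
Your derivation is correct and is essentially the route the paper intends: the ``modified'' formula is obtained from Theorem 2 purely by expanding $\log(ax)\log(bx)=(\log(a)+\log(x))(\log(b)+\log(x))$, which turns $\frac{1}{2}\frac{\log(ax)\log(bx)}{\log(a)\log(b)}-\frac{1}{4}$ into $\frac{\log(x)^{2}}{2\log(a)\log(b)}+\frac{\log(x)}{2\log(a)}+\frac{\log(x)}{2\log(b)}+\frac{1}{4}$, while every other term (the $B^{*}_{1}$ terms, the conditionally summed cosine series with its $\lfloor R\log(a)\rfloor$, $\lfloor R\log(b)\rfloor$ truncation convention, and $\frac{1}{2}\chi_{S_{a,b}}(x)$) is carried over unchanged. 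The only nitpick is terminological: the Hardy--Littlewood cosine series is a single sum over $k$ (with a special summation convention), not a doubly-infinite sum, but this does not affect the argument.
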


\section{The Formula for the Distribution of the 5-Smooth Numbers}

\noindent Let $a,b,c\in\mathbb{N}$ such that $a<b<c$ and $\gcd(a,b,c)=1$.\\
\noindent For $x\in\mathbb{R}^{+}_{0}$, we define the function $N_{a,b,c}(x)$ by
\begin{displaymath}
\begin{split}
N_{a,b,c}(x):&=\sum_{\substack{a^{p}b^{q}c^{l}\leq x\\p\in\mathbb{N}_{0},q\in\mathbb{N}_{0},l\in\mathbb{N}_{0}}}1.
\end{split}
\end{displaymath}

\noindent We define also
\begin{displaymath}
\begin{split}
S_{a,b,c}:&=\left\{a^{p}b^{q}c^{l}:p\in\mathbb{N}_{0},q\in\mathbb{N}_{0},l\in\mathbb{N}_{0}\right\},\\
\chi_{S_{a,b,c}}(x):&=\begin{cases}1&\text{if $x\in S_{a,b,c}$}\\0&\text{if $x\notin S_{a,b,c}$}\end{cases}.
\end{split}
\end{displaymath}

\noindent Thus, we have that
\begin{displaymath}
\begin{split}
N_{a,b,c}(x)&=\sum_{k=0}^{\left\lfloor\log_{a}(x)\right\rfloor}\sum_{l=0}^{\left\lfloor\log_{b}\left(\frac{x}{a^{k}}\right)\right\rfloor}\left(\left\lfloor\log_{c}\left(\frac{x}{a^{k}b^{l}}\right)\right\rfloor+1\right).
\end{split}
\end{displaymath}

\noindent We have the following

\begin{theorem}(Formula for $N_{a,b,c}(x)$)\\
\noindent For every real number $x\geq1$, we have that
\begin{displaymath}
\begin{split}
N_{a,b,c}(x)&=\frac{\log(x)^{3}}{6\log(a)\log(b)\log(c)}+\frac{\log(x)^{2}}{4\log(a)\log(b)}+\frac{\log(x)^{2}}{4\log(a)\log(c)}+\frac{\log(x)^{2}}{4\log(b)\log(c)}+\frac{\log(x)}{4\log(a)}\\
&\quad+\frac{\log(x)}{4\log(b)}+\frac{\log(x)}{4\log(c)}+\frac{\log(a)\log(x)}{12\log(b)\log(c)}+\frac{\log(b)\log(x)}{12\log(a)\log(c)}+\frac{\log(c)\log(x)}{12\log(a)\log(b)}\\
&\quad+\frac{\log(a)}{24\log(b)}+\frac{\log(a)}{24\log(c)}+\frac{\log(b)}{24\log(a)}+\frac{\log(b)}{24\log(c)}+\frac{\log(c)}{24\log(a)}+\frac{\log(c)}{24\log(b)}+\frac{1}{8}\\
&\quad-B^{*}_{1}\left(\left\{\frac{\log(x)}{\log(a)}\right\}\right)-B^{*}_{1}\left(\left\{\frac{\log(x)}{\log(b)}\right\}\right)-B^{*}_{1}\left(\left\{\frac{\log(x)}{\log(c)}\right\}\right)\\
&\quad-\frac{1}{4\pi}\sum_{k=1}^{\infty}\left(\frac{\cos\left(2\pi k\frac{\log(x)-\frac{1}{2}\log(b)}{\log(a)}\right)}{k\sin\left(\frac{\pi k\log(b)}{\log(a)}\right)}+\frac{\cos\left(2\pi k\frac{\log(x)-\frac{1}{2}\log(a)}{\log(b)}\right)}{k\sin\left(\frac{\pi k\log(a)}{\log(b)}\right)}\right)\\
&\quad-\frac{1}{4\pi}\sum_{k=1}^{\infty}\left(\frac{\cos\left(2\pi k\frac{\log(x)-\frac{1}{2}\log(c)}{\log(b)}\right)}{k\sin\left(\frac{\pi k\log(c)}{\log(b)}\right)}+\frac{\cos\left(2\pi k\frac{\log(x)-\frac{1}{2}\log(b)}{\log(c)}\right)}{k\sin\left(\frac{\pi k\log(b)}{\log(c)}\right)}\right)\\
&\quad-\frac{1}{4\pi}\sum_{k=1}^{\infty}\left(\frac{\cos\left(2\pi k\frac{\log(x)-\frac{1}{2}\log(a)}{\log(c)}\right)}{k\sin\left(\frac{\pi k\log(a)}{\log(c)}\right)}+\frac{\cos\left(2\pi k\frac{\log(x)-\frac{1}{2}\log(c)}{\log(a)}\right)}{k\sin\left(\frac{\pi k\log(c)}{\log(a)}\right)}\right)\\
\end{split}
\end{displaymath}
\begin{displaymath}
\begin{split}
\hspace{1.5cm}&\quad-\frac{1}{8\pi}\sum_{k=1}^{\infty}\left(\frac{\sin\left(2\pi k\frac{\log(x)+\frac{1}{2}\log(b)-\frac{1}{2}\log(c)}{\log(a)}\right)}{k\sin\left(\frac{\pi k\log(b)}{\log(a)}\right)\sin\left(\frac{\pi k\log(c)}{\log(a)}\right)}+\frac{\sin\left(2\pi k\frac{\log(x)-\frac{1}{2}\log(b)+\frac{1}{2}\log(c)}{\log(a)}\right)}{k\sin\left(\frac{\pi k\log(b)}{\log(a)}\right)\sin\left(\frac{\pi k\log(c)}{\log(a)}\right)}\right)\\
&\quad-\frac{1}{8\pi}\sum_{k=1}^{\infty}\left(\frac{\sin\left(2\pi k\frac{\log(x)+\frac{1}{2}\log(a)-\frac{1}{2}\log(c)}{\log(b)}\right)}{k\sin\left(\frac{\pi k\log(a)}{\log(b)}\right)\sin\left(\frac{\pi k\log(c)}{\log(b)}\right)}+\frac{\sin\left(2\pi k\frac{\log(x)-\frac{1}{2}\log(a)+\frac{1}{2}\log(c)}{\log(b)}\right)}{k\sin\left(\frac{\pi k\log(a)}{\log(b)}\right)\sin\left(\frac{\pi k\log(c)}{\log(b)}\right)}\right)\\
&\quad-\frac{1}{8\pi}\sum_{k=1}^{\infty}\left(\frac{\sin\left(2\pi k\frac{\log(x)+\frac{1}{2}\log(a)-\frac{1}{2}\log(b)}{\log(c)}\right)}{k\sin\left(\frac{\pi k\log(a)}{\log(c)}\right)\sin\left(\frac{\pi k\log(b)}{\log(c)}\right)}+\frac{\sin\left(2\pi k\frac{\log(x)-\frac{1}{2}\log(a)+\frac{1}{2}\log(b)}{\log(c)}\right)}{k\sin\left(\frac{\pi k\log(a)}{\log(c)}\right)\sin\left(\frac{\pi k\log(b)}{\log(c)}\right)}\right)+\frac{1}{2}\chi_{S_{a,b,c}}(x).
\end{split}
\end{displaymath}
\end{theorem}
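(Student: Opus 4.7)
The natural plan is to reduce the three-variable counting function to the two-variable case already handled by Corollary 6. Partitioning the triples $(k,l,m) \in \mathbb{N}_{0}^{3}$ with $a^{k}b^{l}c^{m} \leq x$ by their first coordinate yields
\begin{displaymath}
N_{a,b,c}(x) = \sum_{k=0}^{\lfloor \log_{a}(x)\rfloor} N_{b,c}(x/a^{k}),
\end{displaymath}
so I would substitute the modified Hardy--Littlewood formula from Corollary 6 into each $N_{b,c}(x/a^{k})$ and sum the resulting pieces over $k = 0, 1, \ldots, L$, where $L := \lfloor \log_{a}(x)\rfloor = (\log x)/\log(a) - \{(\log x)/\log(a)\}$.

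The polynomial part of $N_{b,c}(x/a^{k})$ is a quadratic polynomial in $\log(x/a^{k}) = \log(x) - k\log(a)$, so summing over $k$ reduces to the standard power sums $\sum_{k=0}^{L} k^{j}$ for $j \in \{0,1,2\}$. Expanding these and substituting for $L$, then invoking the identities $B_{1}^{*}(\{u\}) = \{u\} - 1/2$ (for $u \notin \mathbb{Z}$) and $B_{2}(\{u\}) = \{u\}^{2} - \{u\} + 1/6$, should account for all the polynomial terms in $\log(x)$ of the theorem together with the $B_{1}^{*}(\{(\log x)/\log(a)\})$ term. The characteristic function contribution reduces to $\tfrac{1}{2}\chi_{S_{a,b,c}}(x)$ since (under multiplicative independence of $a,b,c$) at most one $k$ makes $x/a^{k}$ lie in $S_{b,c}$.

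For the oscillating parts---the two $B_{1}^{*}$ terms and the Hardy--Littlewood double series in Corollary 6, each evaluated at $x/a^{k}$---I would Fourier-expand each $B_{1}^{*}$ via $B_{1}^{*}(\{t\}) = -\sum_{j \geq 1} \sin(2\pi j t)/(\pi j)$ and interchange the order of summation so that the inner sum over $k$ becomes a geometric series $\sum_{k=0}^{L} e^{-2\pi i j k \log(a)/\log(b)}$. Its closed form produces a $\sin(\pi j \log(a)/\log(b))$ in the denominator, precisely matching the sine denominators of the first three Hardy--Littlewood-style series in the theorem. Splitting the geometric sum into its $k=0$ and $k=L+1$ boundary contributions and re-expanding the $L$-dependent phase $e^{-2\pi i j L \log(a)/\log(b)}$ via $L = (\log x)/\log(a) - \{(\log x)/\log(a)\}$ yields both a clean $(a,b)$ (respectively $(a,c)$) Hardy--Littlewood series and a residual boundary Fourier series that reassembles into $B_{1}^{*}(\{(\log x)/\log(a)\})$. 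The three ``double-sine-denominator'' triple series in the theorem arise analogously from applying the same geometric trick to the Hardy--Littlewood double series of $N_{b,c}(x/a^{k})$, where one sine factor comes from the two-variable formula and the second from the summation over $k$.

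The main obstacle is that the Hardy--Littlewood series is only conditionally convergent and must be evaluated in a specific truncated sense. Interchanging this conditional sum with the finite sum over $k$, and verifying that the resulting triple Fourier series converges in the appropriate truncated sense to the claimed value, is the delicate step. Once the manipulations are justified, the apparent asymmetry in the roles of $a,b,c$ disappears automatically, since $N_{a,b,c}(x)$ is manifestly symmetric in its indices and the computed expression must therefore coincide with its three cyclic counterparts.
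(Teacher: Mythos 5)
Your route is genuinely different from the paper's. The paper never slices $N_{a,b,c}$ into two-variable counts: it applies Perron's formula to $\sum_{k}\chi_{S_{a,b,c}}(k)k^{-s}=\prod\left(1-e^{-s\log(\cdot)}\right)^{-1}$, computes the residue at $s=0$ (the entire polynomial block) and the residues at $s=\pm 2\pi ik/\log(a)$, $\pm 2\pi ik/\log(b)$, $\pm 2\pi ik/\log(c)$ (the oscillating series, obtained first in cotangent form in Corollary 14), and then passes to the shifted-cosine/shifted-sine form of Theorem 7 via $\cos(A-B)/\sin(B)=\cot(B)\cos(A)+\sin(A)$, $\sin(A+B-C)+\sin(A-B+C)=2\sin(A)\cos(B-C)$, and $\sum_{k\geq1}\sin(2\pi ku)/k=-\pi B^{*}_{1}(\{u\})$ (which is exactly why the $B^{*}_{1}$ coefficient is $-\tfrac14$ in Corollary 14 but $-1$ in Theorem 7). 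A slicing argument of the kind you propose can in principle be carried through, but it trades one residue computation for a large amount of Fourier bookkeeping, and your sketch omits the hardest part of that bookkeeping.

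The concrete gap: summing the polynomial block of $N_{b,c}(x/a^{k})$ over $k=0,\dots,L$ with $L=\log(x)/\log(a)-\{\log(x)/\log(a)\}$ produces not only the cubic polynomial in $\log(x)$ and a term linear in $\{\log(x)/\log(a)\}$, but also terms proportional to $\{\log(x)/\log(a)\}^{2}$ and $\{\log(x)/\log(a)\}^{3}$ (from $\sum_{k\le L}k$ and $\sum_{k\le L}k^{2}$), some of them still carrying a factor of $\log(x)$. No such terms appear in Theorem 7, so they must be Fourier-expanded and shown to cancel against the boundary pieces of your geometric sums; this is precisely the phenomenon already visible in two variables, where slicing naturally yields Theorem 1, which carries explicit $B_{2}$ terms, rather than Theorem 2, which has none. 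Your assertion that the polynomial summation ``should account for all the polynomial terms of the theorem together with the $B^{*}_{1}$ term'' is therefore not true as stated, and tracking where the degree-two and degree-three fractional-part terms go is the heart of the computation. Relatedly, the boundary phases from your geometric sums are functions of $\lfloor\log_{a}(x)\rfloor$, i.e.\ of $\log(x)-\log(a)\{\log(x)/\log(a)\}$, so showing that they reassemble into $B^{*}_{1}(\{\log(x)/\log(a)\})$ plus the stated shifted series needs an explicit identity; the closing appeal to symmetry only shows the computed expression is symmetric in $a,b,c$, not that it equals the particular symmetric expression claimed.
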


\noindent This formula converges very rapidly.\\

\noindent In the above formula, the series are to be interpreted as meaning
\begin{displaymath}
\begin{split}
&\sum_{k=1}^{\infty}\left(\frac{\cos\left(2\pi k\frac{\log(x)-\frac{1}{2}\log(b)}{\log(a)}\right)}{k\sin\left(\frac{\pi k\log(b)}{\log(a)}\right)}+\frac{\cos\left(2\pi k\frac{\log(x)-\frac{1}{2}\log(a)}{\log(b)}\right)}{k\sin\left(\frac{\pi k\log(a)}{\log(b)}\right)}\right)\\
=&\lim_{R\rightarrow\infty}\left(\sum_{k=1}^{\left\lfloor R\log(a)\right\rfloor}\frac{\cos\left(2\pi k\frac{\log(x)-\frac{1}{2}\log(b)}{\log(a)}\right)}{k\sin\left(\frac{\pi k\log(b)}{\log(a)}\right)}+\sum_{k=1}^{\left\lfloor R\log(b)\right\rfloor}\frac{\cos\left(2\pi k\frac{\log(x)-\frac{1}{2}\log(a)}{\log(b)}\right)}{k\sin\left(\frac{\pi k\log(a)}{\log(b)}\right)}\right)
\end{split}
\end{displaymath}
\noindent and
\begin{displaymath}
\begin{split}
&\sum_{k=1}^{\infty}\left(\frac{\sin\left(2\pi k\frac{\log(x)+\frac{1}{2}\log(b)-\frac{1}{2}\log(c)}{\log(a)}\right)}{k\sin\left(\frac{\pi k\log(b)}{\log(a)}\right)\sin\left(\frac{\pi k\log(c)}{\log(a)}\right)}+\frac{\sin\left(2\pi k\frac{\log(x)-\frac{1}{2}\log(b)+\frac{1}{2}\log(c)}{\log(a)}\right)}{k\sin\left(\frac{\pi k\log(b)}{\log(a)}\right)\sin\left(\frac{\pi k\log(c)}{\log(a)}\right)}\right)\\
=&\lim_{R\rightarrow\infty}\left(\sum_{k=1}^{\left\lfloor R\log(a)\right\rfloor}\left(\frac{\sin\left(2\pi k\frac{\log(x)+\frac{1}{2}\log(b)-\frac{1}{2}\log(c)}{\log(a)}\right)}{k\sin\left(\frac{\pi k\log(b)}{\log(a)}\right)\sin\left(\frac{\pi k\log(c)}{\log(a)}\right)}+\frac{\sin\left(2\pi k\frac{\log(x)-\frac{1}{2}\log(b)+\frac{1}{2}\log(c)}{\log(a)}\right)}{k\sin\left(\frac{\pi k\log(b)}{\log(a)}\right)\sin\left(\frac{\pi k\log(c)}{\log(a)}\right)}\right)\right),
\end{split}
\end{displaymath}
\noindent when $R\rightarrow\infty$ in an appropriate manner.\\

\noindent Setting $a=2$, $b=3$ and $c=5$ and interpreting the series, like before, as meaning (for example)
\begin{displaymath}
\begin{split}
&\sum_{k=1}^{\infty}\left(\frac{\cos\left(2\pi k\frac{\log(x)-\frac{1}{2}\log(3)}{\log(2)}\right)}{k\sin\left(\frac{\pi k\log(3)}{\log(2)}\right)}+\frac{\cos\left(2\pi k\frac{\log(x)-\frac{1}{2}\log(2)}{\log(3)}\right)}{k\sin\left(\frac{\pi k\log(2)}{\log(3)}\right)}\right)\\
=&\lim_{R\rightarrow\infty}\left(\sum_{k=1}^{\left\lfloor R\log(2)\right\rfloor}\frac{\cos\left(2\pi k\frac{\log(x)-\frac{1}{2}\log(3)}{\log(2)}\right)}{k\sin\left(\frac{\pi k\log(3)}{\log(2)}\right)}+\sum_{k=1}^{\left\lfloor R\log(3)\right\rfloor}\frac{\cos\left(2\pi k\frac{\log(x)-\frac{1}{2}\log(2)}{\log(3)}\right)}{k\sin\left(\frac{\pi k\log(2)}{\log(3)}\right)}\right)
\end{split}
\end{displaymath}
\noindent and
\begin{displaymath}
\begin{split}
&\sum_{k=1}^{\infty}\left(\frac{\sin\left(2\pi k\frac{\log(x)+\frac{1}{2}\log(3)-\frac{1}{2}\log(5)}{\log(2)}\right)}{k\sin\left(\frac{\pi k\log(3)}{\log(2)}\right)\sin\left(\frac{\pi k\log(5)}{\log(2)}\right)}+\frac{\sin\left(2\pi k\frac{\log(x)-\frac{1}{2}\log(3)+\frac{1}{2}\log(5)}{\log(2)}\right)}{k\sin\left(\frac{\pi k\log(3)}{\log(2)}\right)\sin\left(\frac{\pi k\log(5)}{\log(2)}\right)}\right)\\
=&\lim_{R\rightarrow\infty}\left(\sum_{k=1}^{\left\lfloor R\log(2)\right\rfloor}\left(\frac{\sin\left(2\pi k\frac{\log(x)+\frac{1}{2}\log(3)-\frac{1}{2}\log(5)}{\log(2)}\right)}{k\sin\left(\frac{\pi k\log(3)}{\log(2)}\right)\sin\left(\frac{\pi k\log(5)}{\log(2)}\right)}+\frac{\sin\left(2\pi k\frac{\log(x)-\frac{1}{2}\log(3)+\frac{1}{2}\log(5)}{\log(2)}\right)}{k\sin\left(\frac{\pi k\log(3)}{\log(2)}\right)\sin\left(\frac{\pi k\log(5)}{\log(2)}\right)}\right)\right),
\end{split}
\end{displaymath}
\noindent when $R\rightarrow\infty$ in an appropriate manner, we get for the sequence
\begin{displaymath}
\begin{split}
S_{2,3,5}:&=\left\{2^{p}3^{q}5^{l}:p\in\mathbb{N}_{0},q\in\mathbb{N}_{0},l\in\mathbb{N}_{0}\right\}\\
&=\left\{1,2,3,4,5,6,8,9,10,12,15,16,18,20,24,25,27,30,32,36,40,45,48,\ldots\right\},
\end{split}
\end{displaymath}
\noindent of $5$-smooth numbers (regular numbers or Hamming numbers) \cite{9,10}, the following formula

\begin{corollary}(Formula for the $5$-Smooth Numbers Counting Function $N_{2,3,5}(x)$)\\
\noindent For every real number $x\geq1$, we have that
\begin{displaymath}
\begin{split}
N_{2,3,5}(x)&=\frac{\log(x)^{3}}{6\log(2)\log(3)\log(5)}+\frac{\log(x)^{2}}{4\log(2)\log(3)}+\frac{\log(x)^{2}}{4\log(2)\log(5)}+\frac{\log(x)^{2}}{4\log(3)\log(5)}+\frac{\log(x)}{4\log(2)}\\
&\quad+\frac{\log(x)}{4\log(3)}+\frac{\log(x)}{4\log(5)}+\frac{\log(2)\log(x)}{12\log(3)\log(5)}+\frac{\log(3)\log(x)}{12\log(2)\log(5)}+\frac{\log(5)\log(x)}{12\log(2)\log(3)}\\
&\quad+\frac{\log(2)}{24\log(3)}+\frac{\log(2)}{24\log(5)}+\frac{\log(3)}{24\log(2)}+\frac{\log(3)}{24\log(5)}+\frac{\log(5)}{24\log(2)}+\frac{\log(5)}{24\log(3)}+\frac{1}{8}\\
&\quad-B^{*}_{1}\left(\left\{\frac{\log(x)}{\log(2)}\right\}\right)-B^{*}_{1}\left(\left\{\frac{\log(x)}{\log(3)}\right\}\right)-B^{*}_{1}\left(\left\{\frac{\log(x)}{\log(5)}\right\}\right)\\
&\quad-\frac{1}{4\pi}\sum_{k=1}^{\infty}\left(\frac{\cos\left(2\pi k\frac{\log(x)-\frac{1}{2}\log(3)}{\log(2)}\right)}{k\sin\left(\frac{\pi k\log(3)}{\log(2)}\right)}+\frac{\cos\left(2\pi k\frac{\log(x)-\frac{1}{2}\log(2)}{\log(3)}\right)}{k\sin\left(\frac{\pi k\log(2)}{\log(3)}\right)}\right)\\
&\quad-\frac{1}{4\pi}\sum_{k=1}^{\infty}\left(\frac{\cos\left(2\pi k\frac{\log(x)-\frac{1}{2}\log(5)}{\log(3)}\right)}{k\sin\left(\frac{\pi k\log(5)}{\log(3)}\right)}+\frac{\cos\left(2\pi k\frac{\log(x)-\frac{1}{2}\log(3)}{\log(5)}\right)}{k\sin\left(\frac{\pi k\log(3)}{\log(5)}\right)}\right)\\
&\quad-\frac{1}{4\pi}\sum_{k=1}^{\infty}\left(\frac{\cos\left(2\pi k\frac{\log(x)-\frac{1}{2}\log(2)}{\log(5)}\right)}{k\sin\left(\frac{\pi k\log(2)}{\log(5)}\right)}+\frac{\cos\left(2\pi k\frac{\log(x)-\frac{1}{2}\log(5)}{\log(2)}\right)}{k\sin\left(\frac{\pi k\log(5)}{\log(2)}\right)}\right)\\
&\quad-\frac{1}{8\pi}\sum_{k=1}^{\infty}\left(\frac{\sin\left(2\pi k\frac{\log(x)+\frac{1}{2}\log(3)-\frac{1}{2}\log(5)}{\log(2)}\right)}{k\sin\left(\frac{\pi k\log(3)}{\log(2)}\right)\sin\left(\frac{\pi k\log(5)}{\log(2)}\right)}+\frac{\sin\left(2\pi k\frac{\log(x)-\frac{1}{2}\log(3)+\frac{1}{2}\log(5)}{\log(2)}\right)}{k\sin\left(\frac{\pi k\log(3)}{\log(2)}\right)\sin\left(\frac{\pi k\log(5)}{\log(2)}\right)}\right)\\
\end{split}
\end{displaymath}
\begin{displaymath}
\begin{split}
\hspace{1.5cm}&\quad-\frac{1}{8\pi}\sum_{k=1}^{\infty}\left(\frac{\sin\left(2\pi k\frac{\log(x)+\frac{1}{2}\log(2)-\frac{1}{2}\log(5)}{\log(3)}\right)}{k\sin\left(\frac{\pi k\log(2)}{\log(3)}\right)\sin\left(\frac{\pi k\log(5)}{\log(3)}\right)}+\frac{\sin\left(2\pi k\frac{\log(x)-\frac{1}{2}\log(2)+\frac{1}{2}\log(5)}{\log(3)}\right)}{k\sin\left(\frac{\pi k\log(2)}{\log(3)}\right)\sin\left(\frac{\pi k\log(5)}{\log(3)}\right)}\right)\\
&\quad-\frac{1}{8\pi}\sum_{k=1}^{\infty}\left(\frac{\sin\left(2\pi k\frac{\log(x)+\frac{1}{2}\log(2)-\frac{1}{2}\log(3)}{\log(5)}\right)}{k\sin\left(\frac{\pi k\log(2)}{\log(5)}\right)\sin\left(\frac{\pi k\log(3)}{\log(5)}\right)}+\frac{\sin\left(2\pi k\frac{\log(x)-\frac{1}{2}\log(2)+\frac{1}{2}\log(3)}{\log(5)}\right)}{k\sin\left(\frac{\pi k\log(2)}{\log(5)}\right)\sin\left(\frac{\pi k\log(3)}{\log(5)}\right)}\right)+\frac{1}{2}\chi_{S_{2,3,5}}(x).
\end{split}
\end{displaymath}
\end{corollary}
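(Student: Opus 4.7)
The plan is to obtain Corollary 8 as an immediate specialization of Theorem 7. First I would verify the hypotheses: with $a=2$, $b=3$, $c=5$ one has $a<b<c$ and $\gcd(a,b,c)=\gcd(2,3,5)=1$, so the assumptions of Theorem 7 are satisfied. Next I would substitute these numerical values term-by-term into the formula of Theorem 7, replacing each occurrence of $\log(a)$, $\log(b)$, $\log(c)$ by $\log(2)$, $\log(3)$, $\log(5)$ respectively. Executed line-by-line, this reproduces every polynomial-in-$\log(x)$ summand, every $B_{1}^{*}$ term, every ``cosine/single-sine'' double series, and every ``sine/product-of-sines'' triple series of the claimed identity in the same order as they appear.

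The second step is to identify the objects involved. By definition, a $5$-smooth (regular, Hamming) number is a positive integer all of whose prime factors lie in $\{2,3,5\}$, i.e.\ an integer of the form $2^{p}3^{q}5^{l}$ with $p,q,l\in\mathbb{N}_{0}$. Hence the set $S_{2,3,5}$ and its counting function $N_{2,3,5}(x)$, as defined in Section 3 with $(a,b,c)=(2,3,5)$, coincide with the standard set of $5$-smooth numbers and its counting function; in particular the final term $\tfrac{1}{2}\chi_{S_{2,3,5}}(x)$ has the intended meaning.

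Finally, the conditionally convergent trigonometric series inherit the limiting interpretation prescribed in Theorem 7: partial sums are truncated at $\lfloor R\log(2)\rfloor$, $\lfloor R\log(3)\rfloor$, $\lfloor R\log(5)\rfloor$ and then $R\rightarrow\infty$ in an appropriate manner. This is exactly the interpretation spelled out explicitly in the two displays immediately preceding the statement of the corollary, so nothing further need be said about convergence.

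In summary, the proof consists entirely of substitution together with the observation that $\{2,3,5\}$ generates the $5$-smooth numbers multiplicatively. There is no real obstacle at this stage: the substantive work lives in Theorem 7, whose proof is where one actually has to carry out the Fourier expansion of the fractional-part functions appearing in the triple iterated sum $\sum_{k}\sum_{l}(\lfloor\log_{c}(x/(a^{k}b^{l}))\rfloor+1)$ and reorganize the resulting double and triple trigonometric series. Corollary 8 is just the numerical instance $(a,b,c)=(2,3,5)$ of that identity.
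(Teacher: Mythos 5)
Your proposal is correct and coincides with the paper's treatment: the corollary is obtained there exactly as you describe, by substituting $a=2$, $b=3$, $c=5$ into Theorem 7 (whose hypotheses $a<b<c$ and $\gcd(a,b,c)=1$ are trivially satisfied), identifying $S_{2,3,5}$ with the $5$-smooth numbers, and carrying over the prescribed limiting interpretation of the trigonometric series. Nothing further is needed, since all the substantive work indeed resides in the proof of Theorem 7.
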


\noindent This formula converges very rapidly.

\noindent Using this formula for the $5$-Smooth Numbers Counting Function $N_{2,3,5}(x)$, we get the following table:

\begin{table}[htbp]
\begin{center}
\begin{tabular}{| c | c | c | c |}
\hline
$x$&$N_{2,3,5}(x)$&$\text{Formula for }N_{2,3,5}(x)$&$\text{Number of terms $R$}$\\
\hline
\bfseries $1$&$1$&$1.0191146914343678209209456$&$R=3$\\
\hline
\bfseries $10$&$9$&$9.0066388420020729763649195$&$R=11$\\
\hline
\bfseries $10^{2}$&$34$&$34.01798108016701636663657078$&$R=32$\\
\hline
\bfseries $10^{3}$&$86$&$86.01831146911104727455077198$&$R=40$\\
\hline
\bfseries $10^{4}$&$175$&$175.01259815271196528318821070$&$R=52$\\
\hline
\bfseries $10^{5}$&$313$&$313.01116052291470126065468770$&$R=100$\\
\hline
\bfseries $10^{6}$&$507$&$507.04384962202822061525989835$&$R=104$\\
\hline
\bfseries $10^{7}$&$768$&$768.05762686767314864195183397$&$R=110$\\
\hline
\bfseries $10^{8}$&$1105$&$1105.00435666776355760375109758$&$R=260$\\
\hline
\bfseries $10^{9}$&$1530$&$1530.00198789289107971841182114$&$R=300$\\
\hline
\bfseries $10^{10}$&$2053$&$2053.01709151724653660944693303$&$R=306$\\
\hline
\bfseries $10^{10^{2}}$&$1697191$&$1697191.10060827971167051326275935$&$R=20000$\\
\hline
\end{tabular}
\caption{Values of $N_{2,3,5}(x)$}\end{center}
\end{table}

\section{The Formula for the Distribution of the 7-Smooth Numbers}

\noindent Let $a,b,c,d\in\mathbb{N}$ such that $a<b<c<d$ and $\gcd(a,b,c,d)=1$.\\
\noindent For $x\in\mathbb{R}^{+}_{0}$, we define the function $N_{a,b,c,d}(x)$ by
\begin{displaymath}
\begin{split}
N_{a,b,c,d}(x):&=\sum_{\substack{a^{p}b^{q}c^{l}d^{f}\leq x\\p\in\mathbb{N}_{0},q\in\mathbb{N}_{0},l\in\mathbb{N}_{0},f\in\mathbb{N}_{0}}}1.
\end{split}
\end{displaymath}

\noindent We define also
\begin{displaymath}
\begin{split}
S_{a,b,c,d}:&=\left\{a^{p}b^{q}c^{l}d^{f}:p\in\mathbb{N}_{0},q\in\mathbb{N}_{0},l\in\mathbb{N}_{0},f\in\mathbb{N}_{0}\right\},\\
\chi_{S_{a,b,c,d}}(x):&=\begin{cases}1&\text{if $x\in S_{a,b,c,d}$}\\0&\text{if $x\notin S_{a,b,c,d}$}\end{cases}.
\end{split}
\end{displaymath}

\noindent Thus, we have that
\begin{displaymath}
\begin{split}
N_{a,b,c,d}(x)&=\sum_{k=0}^{\left\lfloor\log_{a}(x)\right\rfloor}\sum_{l=0}^{\left\lfloor\log_{b}\left(\frac{x}{a^{k}}\right)\right\rfloor}\sum_{m=0}^{\left\lfloor\log_{c}\left(\frac{x}{a^{k}b^{l}}\right)\right\rfloor}\left(\left\lfloor\log_{d}\left(\frac{x}{a^{k}b^{l}c^{m}}\right)\right\rfloor+1\right).
\end{split}
\end{displaymath}

\noindent We have the following

\begin{theorem}(Formula for $N_{a,b,c,d}(x)$)\\
\noindent For every real number $x\geq1$, we have that
\begin{displaymath}
\begin{split}
N_{a,b,c,d}(x)&=\frac{\log(x)^{4}}{24\log(a)\log(b)\log(c)\log(d)}+\frac{\log(x)^{3}}{12\log(a)\log(b)\log(c)}+\frac{\log(x)^{3}}{12\log(a)\log(b)\log(d)}\\
&\quad+\frac{\log(x)^{3}}{12\log(a)\log(c)\log(d)}+\frac{\log(x)^{3}}{12\log(b)\log(c)\log(d)}+\frac{\log(a)\log(x)^{2}}{24\log(b)\log(c)\log(d)}\\
&\quad+\frac{\log(b)\log(x)^{2}}{24\log(a)\log(c)\log(d)}+\frac{\log(c)\log(x)^{2}}{24\log(a)\log(b)\log(d)}+\frac{\log(d)\log(x)^{2}}{24\log(a)\log(b)\log(c)}\\
&\quad+\frac{\log(x)^{2}}{8\log(a)\log(b)}+\frac{\log(x)^{2}}{8\log(a)\log(c)}+\frac{\log(x)^{2}}{8\log(a)\log(d)}+\frac{\log(x)^{2}}{8\log(b)\log(c)}+\frac{\log(x)^{2}}{8\log(b)\log(d)}\\
&\quad+\frac{\log(x)^{2}}{8\log(c)\log(d)}+\frac{\log(x)}{8\log(a)}+\frac{\log(x)}{8\log(b)}+\frac{\log(x)}{8\log(c)}+\frac{\log(x)}{8\log(d)}+\frac{\log(a)\log(x)}{24\log(b)\log(c)}\\
&\quad+\frac{\log(a)\log(x)}{24\log(b)\log(d)}+\frac{\log(a)\log(x)}{24\log(c)\log(d)}+\frac{\log(b)\log(x)}{24\log(a)\log(c)}+\frac{\log(b)\log(x)}{24\log(a)\log(d)}+\frac{\log(b)\log(x)}{24\log(c)\log(d)}\\
&\quad+\frac{\log(c)\log(x)}{24\log(a)\log(b)}+\frac{\log(c)\log(x)}{24\log(a)\log(d)}+\frac{\log(c)\log(x)}{24\log(b)\log(d)}+\frac{\log(d)\log(x)}{24\log(a)\log(b)}+\frac{\log(d)\log(x)}{24\log(a)\log(c)}\\
&\quad+\frac{\log(d)\log(x)}{24\log(b)\log(c)}+\frac{1}{16}+\frac{\log(a)}{48\log(b)}+\frac{\log(a)}{48\log(c)}+\frac{\log(a)}{48\log(d)}+\frac{\log(b)}{48\log(a)}+\frac{\log(b)}{48\log(c)}\\
&\quad+\frac{\log(b)}{48\log(d)}+\frac{\log(c)}{48\log(a)}+\frac{\log(c)}{48\log(b)}+\frac{\log(c)}{48\log(d)}+\frac{\log(d)}{48\log(a)}+\frac{\log(d)}{48\log(b)}+\frac{\log(d)}{48\log(c)}\\
&\quad+\frac{\log(a)\log(b)}{144\log(c)\log(d)}+\frac{\log(a)\log(c)}{144\log(b)\log(d)}+\frac{\log(a)\log(d)}{144\log(b)\log(c)}+\frac{\log(b)\log(c)}{144\log(a)\log(d)}\\
&\quad+\frac{\log(b)\log(d)}{144\log(a)\log(c)}+\frac{\log(c)\log(d)}{144\log(a)\log(b)}-\frac{\log(a)^{3}}{720\log(b)\log(c)\log(d)}-\frac{\log(b)^{3}}{720\log(a)\log(c)\log(d)}\\
&\quad-\frac{\log(c)^{3}}{720\log(a)\log(b)\log(d)}-\frac{\log(d)^{3}}{720\log(a)\log(b)\log(c)}-\frac{7}{8}B^{*}_{1}\left(\left\{\frac{\log(x)}{\log(a)}\right\}\right)\\
\end{split}
\end{displaymath}
\begin{displaymath}
\begin{split}
\hspace{1.1cm}&\quad-\frac{7}{8}B^{*}_{1}\left(\left\{\frac{\log(x)}{\log(b)}\right\}\right)-\frac{7}{8}B^{*}_{1}\left(\left\{\frac{\log(x)}{\log(c)}\right\}\right)-\frac{7}{8}B^{*}_{1}\left(\left\{\frac{\log(x)}{\log(d)}\right\}\right)\\
&\quad-\frac{1}{8\pi}\sum_{k=1}^{\infty}\left(\frac{\cos\left(2\pi k\frac{\log(x)-\frac{1}{2}\log(b)}{\log(a)}\right)}{k\sin\left(\frac{\pi k\log(b)}{\log(a)}\right)}+\frac{\cos\left(2\pi k\frac{\log(x)-\frac{1}{2}\log(a)}{\log(b)}\right)}{k\sin\left(\frac{\pi k\log(a)}{\log(b)}\right)}\right)\\
&\quad-\frac{1}{8\pi}\sum_{k=1}^{\infty}\left(\frac{\cos\left(2\pi k\frac{\log(x)-\frac{1}{2}\log(c)}{\log(a)}\right)}{k\sin\left(\frac{\pi k\log(c)}{\log(a)}\right)}+\frac{\cos\left(2\pi k\frac{\log(x)-\frac{1}{2}\log(a)}{\log(c)}\right)}{k\sin\left(\frac{\pi k\log(a)}{\log(c)}\right)}\right)\\
&\quad-\frac{1}{8\pi}\sum_{k=1}^{\infty}\left(\frac{\cos\left(2\pi k\frac{\log(x)-\frac{1}{2}\log(d)}{\log(a)}\right)}{k\sin\left(\frac{\pi k\log(d)}{\log(a)}\right)}+\frac{\cos\left(2\pi k\frac{\log(x)-\frac{1}{2}\log(a)}{\log(d)}\right)}{k\sin\left(\frac{\pi k\log(a)}{\log(d)}\right)}\right)\\
&\quad-\frac{1}{8\pi}\sum_{k=1}^{\infty}\left(\frac{\cos\left(2\pi k\frac{\log(x)-\frac{1}{2}\log(c)}{\log(b)}\right)}{k\sin\left(\frac{\pi k\log(c)}{\log(b)}\right)}+\frac{\cos\left(2\pi k\frac{\log(x)-\frac{1}{2}\log(b)}{\log(c)}\right)}{k\sin\left(\frac{\pi k\log(b)}{\log(c)}\right)}\right)\\
&\quad-\frac{1}{8\pi}\sum_{k=1}^{\infty}\left(\frac{\cos\left(2\pi k\frac{\log(x)-\frac{1}{2}\log(d)}{\log(b)}\right)}{k\sin\left(\frac{\pi k\log(d)}{\log(b)}\right)}+\frac{\cos\left(2\pi k\frac{\log(x)-\frac{1}{2}\log(b)}{\log(d)}\right)}{k\sin\left(\frac{\pi k\log(b)}{\log(d)}\right)}\right)\\
&\quad-\frac{1}{8\pi}\sum_{k=1}^{\infty}\left(\frac{\cos\left(2\pi k\frac{\log(x)-\frac{1}{2}\log(d)}{\log(c)}\right)}{k\sin\left(\frac{\pi k\log(d)}{\log(c)}\right)}+\frac{\cos\left(2\pi k\frac{\log(x)-\frac{1}{2}\log(c)}{\log(d)}\right)}{k\sin\left(\frac{\pi k\log(c)}{\log(d)}\right)}\right)\\
&\quad-\frac{1}{16\pi}\sum_{k=1}^{\infty}\left(\frac{\sin\left(2\pi k\frac{\log(x)+\frac{1}{2}\log(b)-\frac{1}{2}\log(c)}{\log(a)}\right)}{k\sin\left(\frac{\pi k\log(b)}{\log(a)}\right)\sin\left(\frac{\pi k\log(c)}{\log(a)}\right)}+\frac{\sin\left(2\pi k\frac{\log(x)-\frac{1}{2}\log(b)+\frac{1}{2}\log(c)}{\log(a)}\right)}{k\sin\left(\frac{\pi k\log(b)}{\log(a)}\right)\sin\left(\frac{\pi k\log(c)}{\log(a)}\right)}\right)\\
&\quad-\frac{1}{16\pi}\sum_{k=1}^{\infty}\left(\frac{\sin\left(2\pi k\frac{\log(x)+\frac{1}{2}\log(c)-\frac{1}{2}\log(d)}{\log(a)}\right)}{k\sin\left(\frac{\pi k\log(c)}{\log(a)}\right)\sin\left(\frac{\pi k\log(d)}{\log(a)}\right)}+\frac{\sin\left(2\pi k\frac{\log(x)-\frac{1}{2}\log(c)+\frac{1}{2}\log(d)}{\log(a)}\right)}{k\sin\left(\frac{\pi k\log(c)}{\log(a)}\right)\sin\left(\frac{\pi k\log(d)}{\log(a)}\right)}\right)\\
&\quad-\frac{1}{16\pi}\sum_{k=1}^{\infty}\left(\frac{\sin\left(2\pi k\frac{\log(x)+\frac{1}{2}\log(b)-\frac{1}{2}\log(d)}{\log(a)}\right)}{k\sin\left(\frac{\pi k\log(b)}{\log(a)}\right)\sin\left(\frac{\pi k\log(d)}{\log(a)}\right)}+\frac{\sin\left(2\pi k\frac{\log(x)-\frac{1}{2}\log(b)+\frac{1}{2}\log(d)}{\log(a)}\right)}{k\sin\left(\frac{\pi k\log(b)}{\log(a)}\right)\sin\left(\frac{\pi k\log(d)}{\log(a)}\right)}\right)\\
&\quad-\frac{1}{16\pi}\sum_{k=1}^{\infty}\left(\frac{\sin\left(2\pi k\frac{\log(x)+\frac{1}{2}\log(a)-\frac{1}{2}\log(c)}{\log(b)}\right)}{k\sin\left(\frac{\pi k\log(a)}{\log(b)}\right)\sin\left(\frac{\pi k\log(c)}{\log(b)}\right)}+\frac{\sin\left(2\pi k\frac{\log(x)-\frac{1}{2}\log(a)+\frac{1}{2}\log(c)}{\log(b)}\right)}{k\sin\left(\frac{\pi k\log(a)}{\log(b)}\right)\sin\left(\frac{\pi k\log(c)}{\log(b)}\right)}\right)\\
&\quad-\frac{1}{16\pi}\sum_{k=1}^{\infty}\left(\frac{\sin\left(2\pi k\frac{\log(x)+\frac{1}{2}\log(c)-\frac{1}{2}\log(d)}{\log(b)}\right)}{k\sin\left(\frac{\pi k\log(c)}{\log(b)}\right)\sin\left(\frac{\pi k\log(d)}{\log(b)}\right)}+\frac{\sin\left(2\pi k\frac{\log(x)-\frac{1}{2}\log(c)+\frac{1}{2}\log(d)}{\log(b)}\right)}{k\sin\left(\frac{\pi k\log(c)}{\log(b)}\right)\sin\left(\frac{\pi k\log(d)}{\log(b)}\right)}\right)\\
\end{split}
\end{displaymath}
\begin{displaymath}
\begin{split}
\hspace{1.75cm}&\quad-\frac{1}{16\pi}\sum_{k=1}^{\infty}\left(\frac{\sin\left(2\pi k\frac{\log(x)+\frac{1}{2}\log(a)-\frac{1}{2}\log(d)}{\log(b)}\right)}{k\sin\left(\frac{\pi k\log(a)}{\log(b)}\right)\sin\left(\frac{\pi k\log(d)}{\log(b)}\right)}+\frac{\sin\left(2\pi k\frac{\log(x)-\frac{1}{2}\log(a)+\frac{1}{2}\log(d)}{\log(b)}\right)}{k\sin\left(\frac{\pi k\log(a)}{\log(b)}\right)\sin\left(\frac{\pi k\log(d)}{\log(b)}\right)}\right)\\
&\quad-\frac{1}{16\pi}\sum_{k=1}^{\infty}\left(\frac{\sin\left(2\pi k\frac{\log(x)+\frac{1}{2}\log(a)-\frac{1}{2}\log(b)}{\log(c)}\right)}{k\sin\left(\frac{\pi k\log(a)}{\log(c)}\right)\sin\left(\frac{\pi k\log(b)}{\log(c)}\right)}+\frac{\sin\left(2\pi k\frac{\log(x)-\frac{1}{2}\log(a)+\frac{1}{2}\log(b)}{\log(c)}\right)}{k\sin\left(\frac{\pi k\log(a)}{\log(c)}\right)\sin\left(\frac{\pi k\log(b)}{\log(c)}\right)}\right)\\
&\quad-\frac{1}{16\pi}\sum_{k=1}^{\infty}\left(\frac{\sin\left(2\pi k\frac{\log(x)+\frac{1}{2}\log(b)-\frac{1}{2}\log(d)}{\log(c)}\right)}{k\sin\left(\frac{\pi k\log(b)}{\log(c)}\right)\sin\left(\frac{\pi k\log(d)}{\log(c)}\right)}+\frac{\sin\left(2\pi k\frac{\log(x)-\frac{1}{2}\log(b)+\frac{1}{2}\log(d)}{\log(c)}\right)}{k\sin\left(\frac{\pi k\log(b)}{\log(c)}\right)\sin\left(\frac{\pi k\log(d)}{\log(c)}\right)}\right)\\
&\quad-\frac{1}{16\pi}\sum_{k=1}^{\infty}\left(\frac{\sin\left(2\pi k\frac{\log(x)+\frac{1}{2}\log(a)-\frac{1}{2}\log(d)}{\log(c)}\right)}{k\sin\left(\frac{\pi k\log(a)}{\log(c)}\right)\sin\left(\frac{\pi k\log(d)}{\log(c)}\right)}+\frac{\sin\left(2\pi k\frac{\log(x)-\frac{1}{2}\log(a)+\frac{1}{2}\log(d)}{\log(c)}\right)}{k\sin\left(\frac{\pi k\log(a)}{\log(c)}\right)\sin\left(\frac{\pi k\log(d)}{\log(c)}\right)}\right)\\
&\quad-\frac{1}{16\pi}\sum_{k=1}^{\infty}\left(\frac{\sin\left(2\pi k\frac{\log(x)+\frac{1}{2}\log(a)-\frac{1}{2}\log(b)}{\log(d)}\right)}{k\sin\left(\frac{\pi k\log(a)}{\log(d)}\right)\sin\left(\frac{\pi k\log(b)}{\log(d)}\right)}+\frac{\sin\left(2\pi k\frac{\log(x)-\frac{1}{2}\log(a)+\frac{1}{2}\log(b)}{\log(d)}\right)}{k\sin\left(\frac{\pi k\log(a)}{\log(d)}\right)\sin\left(\frac{\pi k\log(b)}{\log(d)}\right)}\right)\\
&\quad-\frac{1}{16\pi}\sum_{k=1}^{\infty}\left(\frac{\sin\left(2\pi k\frac{\log(x)+\frac{1}{2}\log(b)-\frac{1}{2}\log(c)}{\log(d)}\right)}{k\sin\left(\frac{\pi k\log(b)}{\log(d)}\right)\sin\left(\frac{\pi k\log(c)}{\log(d)}\right)}+\frac{\sin\left(2\pi k\frac{\log(x)-\frac{1}{2}\log(b)+\frac{1}{2}\log(c)}{\log(d)}\right)}{k\sin\left(\frac{\pi k\log(b)}{\log(d)}\right)\sin\left(\frac{\pi k\log(c)}{\log(d)}\right)}\right)\\
&\quad-\frac{1}{16\pi}\sum_{k=1}^{\infty}\left(\frac{\sin\left(2\pi k\frac{\log(x)+\frac{1}{2}\log(a)-\frac{1}{2}\log(c)}{\log(d)}\right)}{k\sin\left(\frac{\pi k\log(a)}{\log(d)}\right)\sin\left(\frac{\pi k\log(c)}{\log(d)}\right)}+\frac{\sin\left(2\pi k\frac{\log(x)-\frac{1}{2}\log(a)+\frac{1}{2}\log(c)}{\log(d)}\right)}{k\sin\left(\frac{\pi k\log(a)}{\log(d)}\right)\sin\left(\frac{\pi k\log(c)}{\log(d)}\right)}\right)\\
&\quad+\frac{1}{8\pi}\sum_{k=1}^{\infty}\frac{\cos\left(\frac{\pi k\log(b)}{\log(a)}\right)\cos\left(\frac{\pi k\log(c)}{\log(a)}\right)\cos\left(\frac{\pi k\log(d)}{\log(a)}\right)\cos\left(2\pi k\frac{\log(x)}{\log(a)}\right)}{k\sin\left(\frac{\pi k\log(b)}{\log(a)}\right)\sin\left(\frac{\pi k\log(c)}{\log(a)}\right)\sin\left(\frac{\pi k\log(d)}{\log(a)}\right)}\\
&\quad+\frac{1}{8\pi}\sum_{k=1}^{\infty}\frac{\cos\left(\frac{\pi k\log(a)}{\log(b)}\right)\cos\left(\frac{\pi k\log(c)}{\log(b)}\right)\cos\left(\frac{\pi k\log(d)}{\log(b)}\right)\cos\left(2\pi k\frac{\log(x)}{\log(b)}\right)}{k\sin\left(\frac{\pi k\log(a)}{\log(b)}\right)\sin\left(\frac{\pi k\log(c)}{\log(b)}\right)\sin\left(\frac{\pi k\log(d)}{\log(b)}\right)}\\
&\quad+\frac{1}{8\pi}\sum_{k=1}^{\infty}\frac{\cos\left(\frac{\pi k\log(a)}{\log(c)}\right)\cos\left(\frac{\pi k\log(b)}{\log(c)}\right)\cos\left(\frac{\pi k\log(d)}{\log(c)}\right)\cos\left(2\pi k\frac{\log(x)}{\log(c)}\right)}{k\sin\left(\frac{\pi k\log(a)}{\log(c)}\right)\sin\left(\frac{\pi k\log(b)}{\log(c)}\right)\sin\left(\frac{\pi k\log(d)}{\log(c)}\right)}\\
&\quad+\frac{1}{8\pi}\sum_{k=1}^{\infty}\frac{\cos\left(\frac{\pi k\log(a)}{\log(d)}\right)\cos\left(\frac{\pi k\log(b)}{\log(d)}\right)\cos\left(\frac{\pi k\log(c)}{\log(d)}\right)\cos\left(2\pi k\frac{\log(x)}{\log(d)}\right)}{k\sin\left(\frac{\pi k\log(a)}{\log(d)}\right)\sin\left(\frac{\pi k\log(b)}{\log(d)}\right)\sin\left(\frac{\pi k\log(c)}{\log(d)}\right)}+\frac{1}{2}\chi_{S_{a,b,c,d}}(x).
\end{split}
\end{displaymath}
\end{theorem}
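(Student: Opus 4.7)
The plan is to derive the formula by induction on the number of bases, reducing the four-base counting function $N_{a,b,c,d}(x)$ to the three-base function $N_{b,c,d}(y)$ already handled in Theorem 7. Conditioning on the exponent $p$ of $a$ yields the exact identity
\[
N_{a,b,c,d}(x) \;=\; \sum_{p=0}^{\lfloor\log_a x\rfloor} N_{b,c,d}\!\left(\frac{x}{a^p}\right),
\]
since every quadruple $(p,q,l,f)$ with $a^{p}b^{q}c^{l}d^{f}\le x$ is obtained by first fixing $0\le p\le \lfloor\log_a x\rfloor$ and then counting triples $(q,l,f)$ with $b^{q}c^{l}d^{f}\le x/a^{p}$. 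I would then substitute the explicit expression of Theorem 7 with $(a,b,c)\mapsto(b,c,d)$ and $x\mapsto x/a^{p}$ into each summand and treat each of the resulting pieces separately.

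For the polynomial-in-$\log(x/a^{p})$ contribution I would expand $\log(x/a^{p})^{j}=(\log x-p\log a)^{j}$ and sum in $p$ by Euler--Maclaurin,
\[
\sum_{p=0}^{M} f(p)=\int_{0}^{M}\!f(t)\,dt+\tfrac12\bigl(f(0)+f(M)\bigr)+\sum_{j=1}^{2}\frac{B_{2j}}{(2j)!}\bigl(f^{(2j-1)}(M)-f^{(2j-1)}(0)\bigr),
\]
truncated at $j=2$ because $f$ has degree at most three. Writing $M=\lfloor\log_{a}x\rfloor = \log_a x - \tfrac12 - B_{1}^{*}(\{\log x/\log a\})$ and collecting terms will produce the polynomial-in-$\log x$ part of the statement together with the rational constants, the mixed terms of the form $\log(a)\log(b)/(144\log(c)\log(d))$ coming from products of the $B_{2}/12$ contributions in each variable, and the $-\log(a)^{3}/(720\log(b)\log(c)\log(d))$ term arising from $B_{4}/4!=-1/720$.

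For the oscillatory pieces I would replace each $B_{1}^{*}(\{y\})$ by its Fourier expansion $-\sum_{k\ge 1}\sin(2\pi k y)/(\pi k)$, substitute $y=(\log x-p\log a)/\log b$ (and analogously with $c,d$), interchange the $k$- and $p$-summations, and evaluate the inner geometric sum through
\[
\sum_{p=0}^{M}e^{-2\pi ikp\log a/\log b} = \frac{1-e^{-2\pi ik(M+1)\log a/\log b}}{1-e^{-2\pi ik\log a/\log b}},
\]
whose denominator factors as $e^{-i\pi k\log a/\log b}\cdot(-2i\sin(\pi k\log a/\log b))$. Extracting the appropriate real or imaginary parts produces the $\cos/\sin$ ratios with argument $\pi k\log a/\log b$ seen throughout Theorem 9. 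Applying the same reduction to the six single-base cosine series of Theorem 7 introduces an additional $\sin(\pi k\log a/\log\cdot)$ in the denominator and gives the double-$\sin$-denominator sine sums of the new formula; applying it to the three double-$\sin$-denominator sine sums of Theorem 7 gives the four triple-$\sin$-denominator cosine sums, in which the numerator product $\cos(\pi k\log b/\log a)\cos(\pi k\log c/\log a)\cos(\pi k\log d/\log a)$ emerges from compounding three half-angle shifts of the form $e^{i\pi k\log\cdot/\log a}$.

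The main obstacle is the rigorous justification of these rearrangements: the triple Fourier sums are only conditionally convergent and must be summed with the precise truncation prescribed by the $\lfloor R\log(\cdot)\rfloor$ cutoff inherited from Theorem 7. I would verify that this summability convention is preserved when summing in $p$ over the finite range $\{0,1,\dots,\lfloor\log_{a}x\rfloor\}$, so that the interchange of $k$- and $p$-summations is legitimate in the sense of the Hardy--Littlewood convention. Finally, I would argue that the $\tfrac12\chi_{S_{a,b,c,d}}(x)$ correction emerges naturally from Fourier inversion at the unit jumps of the counting function, by the same mechanism that produces the $\tfrac12\chi_{S_{a,b,c}}(x)$ correction in Theorem 7.
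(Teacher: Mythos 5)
Your route is genuinely different from the paper's. The paper proves this theorem (through its equivalent cotangent form, Corollary 16, and the general Theorem 11) by applying Perron's formula to $\sum_{k}\chi_{S_{a,b,c,d}}(k)k^{-s}=\prod\left(1-e^{-\log(\cdot)s}\right)^{-1}$ and summing residues: the residue at $s=0$ delivers the entire polynomial--plus--constant block in one stroke (the $-1/720$ coefficients are $B_{4}/4!$ read off from the Laurent expansion of $z/(e^{z}-1)$ in each factor, and the $1/144$ terms are products of two $B_{2}/2!$ coefficients from two different factors), while the paired residues at $s=\pm\frac{2\pi ik}{\log(\cdot)}$ produce the trigonometric sums via an algebraic identity for products of cotangents. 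Your proposal instead iterates the Klauder--Ness recursion $N_{a,b,c,d}(x)=\sum_{p=0}^{\lfloor\log_{a}x\rfloor}N_{b,c,d}(x/a^{p})$ and sums the three-variable Hardy--Littlewood formula over $p$. That strategy is admissible in principle, but as written it has a concrete gap in the accounting of the constant terms.

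The gap is this. First, the Euler--Maclaurin correction $\frac{B_{4}}{4!}\bigl(f'''(M)-f'''(0)\bigr)$ vanishes identically, because $f$ is a cubic and $f'''$ is constant; so the $-\log(\cdot)^{3}/720$ terms cannot arise where you say they do. More seriously, constants such as $-\frac{\log(b)^{3}}{720\log(a)\log(c)\log(d)}$ and $\frac{\log(b)\log(c)}{144\log(a)\log(d)}$ carry cubes or products of $\log(b),\log(c),\log(d)$ in their numerators, whereas every term of the polynomial part of $N_{b,c,d}(x/a^{p})$ has at most a first power of each of these logarithms in its numerator, and summation over $p$ can only introduce powers of $\log(a)$ and $\log(x)$. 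Hence these constants cannot be produced by the polynomial part at all: they have to be extracted as the non-oscillating mean values of the $p$-summed conditionally convergent trigonometric series. Concretely, the closed form of $\sum_{p=0}^{M}e^{-2\pi ikp\log(a)/\log(b)}$ carries the endpoint phase $e^{-2\pi ik(M+1)\log(a)/\log(b)}$ with $M=\lfloor\log_{a}x\rfloor$, which leaves behind series of the shape $\sum_{k}\cos\bigl(\pi k\tfrac{\log a}{\log b}(2\{\log_{a}x\}-1)\bigr)/\bigl(k\sin(\pi k\log(a)/\log(b))\bigr)$; evaluating their constant parts (and showing that the degree-$4$ polynomial in $\{\log_{a}x\}$ coming from Faulhaber's formula cancels down to the single $-\frac{7}{8}B^{*}_{1}$ terms of the statement) requires reciprocity-type identities for these cotangent sums. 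That is precisely the delicate Hardy--Littlewood step, it is where the $a,b,c,d$-symmetry of the final formula has to be restored from your $a$-asymmetric decomposition, and the proposal does not supply it. The paper's residue computation avoids all of this.
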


\noindent This formula converges again very rapidly.\\
\noindent The series appearing in this formula are all interpreted like before.\\

\noindent Setting $a=2$, $b=3$, $c=5$ and $d=7$, we get for the sequence
\begin{displaymath}
\begin{split}
S_{2,3,5,7}:&=\left\{2^{p}3^{q}5^{l}7^{f}:p\in\mathbb{N}_{0},q\in\mathbb{N}_{0},l\in\mathbb{N}_{0},f\in\mathbb{N}_{0}\right\}\\
&=\left\{1,2,3,4,5,6,7,8,9,10,12,14,15,16,18,20,21,24,25,27,28,30,32,35,36,40,42,45,48,\ldots\right\},
\end{split}
\end{displaymath}
\noindent of $7$-smooth numbers (Humble numbers or "highly composite numbers") \cite{1,2,14}, immediately the following

\begin{corollary}(Formula for the $7$-Smooth Numbers Counting Function $N_{2,3,5,7}(x)$)\\
\noindent For every real number $x\geq1$, we have that
\begin{displaymath}
\begin{split}
N_{2,3,5,7}(x)&=\frac{\log(x)^{4}}{24\log(2)\log(3)\log(5)\log(7)}+\frac{\log(x)^{3}}{12\log(2)\log(3)\log(5)}+\frac{\log(x)^{3}}{12\log(2)\log(3)\log(7)}\\
&\quad+\frac{\log(x)^{3}}{12\log(2)\log(5)\log(7)}+\frac{\log(x)^{3}}{12\log(3)\log(5)\log(7)}+\frac{\log(2)\log(x)^{2}}{24\log(3)\log(5)\log(7)}\\
&\quad+\frac{\log(3)\log(x)^{2}}{24\log(2)\log(5)\log(7)}+\frac{\log(5)\log(x)^{2}}{24\log(2)\log(3)\log(7)}+\frac{\log(7)\log(x)^{2}}{24\log(2)\log(3)\log(5)}\\
&\quad+\frac{\log(x)^{2}}{8\log(2)\log(3)}+\frac{\log(x)^{2}}{8\log(2)\log(5)}+\frac{\log(x)^{2}}{8\log(2)\log(7)}+\frac{\log(x)^{2}}{8\log(3)\log(5)}+\frac{\log(x)^{2}}{8\log(2)\log(7)}\\
&\quad+\frac{\log(x)^{2}}{8\log(5)\log(7)}+\frac{\log(x)}{8\log(2)}+\frac{\log(x)}{8\log(3)}+\frac{\log(x)}{8\log(5)}+\frac{\log(x)}{8\log(7)}+\frac{\log(2)\log(x)}{24\log(3)\log(5)}\\
&\quad+\frac{\log(2)\log(x)}{24\log(3)\log(7)}+\frac{\log(2)\log(x)}{24\log(5)\log(7)}+\frac{\log(3)\log(x)}{24\log(2)\log(5)}+\frac{\log(3)\log(x)}{24\log(2)\log(7)}+\frac{\log(3)\log(x)}{24\log(5)\log(7)}\\
&\quad+\frac{\log(5)\log(x)}{24\log(2)\log(3)}+\frac{\log(5)\log(x)}{24\log(2)\log(7)}+\frac{\log(5)\log(x)}{24\log(3)\log(7)}+\frac{\log(7)\log(x)}{24\log(2)\log(3)}+\frac{\log(7)\log(x)}{24\log(2)\log(5)}\\
&\quad+\frac{\log(7)\log(x)}{24\log(3)\log(5)}+\frac{1}{16}+\frac{\log(2)}{48\log(3)}+\frac{\log(2)}{48\log(5)}+\frac{\log(2)}{48\log(7)}+\frac{\log(3)}{48\log(2)}+\frac{\log(3)}{48\log(5)}\\
&\quad+\frac{\log(3)}{48\log(7)}+\frac{\log(5)}{48\log(2)}+\frac{\log(5)}{48\log(3)}+\frac{\log(5)}{48\log(7)}+\frac{\log(7)}{48\log(2)}+\frac{\log(7)}{48\log(3)}+\frac{\log(7)}{48\log(5)}\\
&\quad+\frac{\log(2)\log(3)}{144\log(5)\log(7)}+\frac{\log(2)\log(5)}{144\log(3)\log(7)}+\frac{\log(2)\log(7)}{144\log(3)\log(5)}+\frac{\log(3)\log(5)}{144\log(2)\log(7)}\\
&\quad+\frac{\log(3)\log(7)}{144\log(2)\log(5)}+\frac{\log(5)\log(7)}{144\log(2)\log(3)}-\frac{\log(2)^{3}}{720\log(3)\log(5)\log(7)}-\frac{\log(3)^{3}}{720\log(2)\log(5)\log(7)}\\
&\quad-\frac{\log(5)^{3}}{720\log(2)\log(3)\log(7)}-\frac{\log(7)^{3}}{720\log(2)\log(3)\log(5)}-\frac{7}{8}B^{*}_{1}\left(\left\{\frac{\log(x)}{\log(2)}\right\}\right)\\
&\quad-\frac{7}{8}B^{*}_{1}\left(\left\{\frac{\log(x)}{\log(3)}\right\}\right)-\frac{7}{8}B^{*}_{1}\left(\left\{\frac{\log(x)}{\log(5)}\right\}\right)-\frac{7}{8}B^{*}_{1}\left(\left\{\frac{\log(x)}{\log(7)}\right\}\right)\\
&\quad-\frac{1}{8\pi}\sum_{k=1}^{\infty}\left(\frac{\cos\left(2\pi k\frac{\log(x)-\frac{1}{2}\log(3)}{\log(2)}\right)}{k\sin\left(\frac{\pi k\log(3)}{\log(2)}\right)}+\frac{\cos\left(2\pi k\frac{\log(x)-\frac{1}{2}\log(2)}{\log(3)}\right)}{k\sin\left(\frac{\pi k\log(2)}{\log(3)}\right)}\right)\\
\end{split}
\end{displaymath}
\begin{displaymath}
\begin{split}
\hspace{1.15cm}&\quad-\frac{1}{8\pi}\sum_{k=1}^{\infty}\left(\frac{\cos\left(2\pi k\frac{\log(x)-\frac{1}{2}\log(5)}{\log(2)}\right)}{k\sin\left(\frac{\pi k\log(5)}{\log(2)}\right)}+\frac{\cos\left(2\pi k\frac{\log(x)-\frac{1}{2}\log(2)}{\log(5)}\right)}{k\sin\left(\frac{\pi k\log(2)}{\log(5)}\right)}\right)\\
&\quad-\frac{1}{8\pi}\sum_{k=1}^{\infty}\left(\frac{\cos\left(2\pi k\frac{\log(x)-\frac{1}{2}\log(7)}{\log(2)}\right)}{k\sin\left(\frac{\pi k\log(7)}{\log(2)}\right)}+\frac{\cos\left(2\pi k\frac{\log(x)-\frac{1}{2}\log(2)}{\log(7)}\right)}{k\sin\left(\frac{\pi k\log(2)}{\log(7)}\right)}\right)\\
&\quad-\frac{1}{8\pi}\sum_{k=1}^{\infty}\left(\frac{\cos\left(2\pi k\frac{\log(x)-\frac{1}{2}\log(5)}{\log(3)}\right)}{k\sin\left(\frac{\pi k\log(5)}{\log(3)}\right)}+\frac{\cos\left(2\pi k\frac{\log(x)-\frac{1}{2}\log(3)}{\log(5)}\right)}{k\sin\left(\frac{\pi k\log(3)}{\log(5)}\right)}\right)\\
&\quad-\frac{1}{8\pi}\sum_{k=1}^{\infty}\left(\frac{\cos\left(2\pi k\frac{\log(x)-\frac{1}{2}\log(7)}{\log(3)}\right)}{k\sin\left(\frac{\pi k\log(7)}{\log(3)}\right)}+\frac{\cos\left(2\pi k\frac{\log(x)-\frac{1}{2}\log(3)}{\log(7)}\right)}{k\sin\left(\frac{\pi k\log(3)}{\log(7)}\right)}\right)\\
&\quad-\frac{1}{8\pi}\sum_{k=1}^{\infty}\left(\frac{\cos\left(2\pi k\frac{\log(x)-\frac{1}{2}\log(7)}{\log(5)}\right)}{k\sin\left(\frac{\pi k\log(7)}{\log(5)}\right)}+\frac{\cos\left(2\pi k\frac{\log(x)-\frac{1}{2}\log(5)}{\log(7)}\right)}{k\sin\left(\frac{\pi k\log(5)}{\log(7)}\right)}\right)\\
&\quad-\frac{1}{16\pi}\sum_{k=1}^{\infty}\left(\frac{\sin\left(2\pi k\frac{\log(x)+\frac{1}{2}\log(3)-\frac{1}{2}\log(5)}{\log(2)}\right)}{k\sin\left(\frac{\pi k\log(3)}{\log(2)}\right)\sin\left(\frac{\pi k\log(5)}{\log(2)}\right)}+\frac{\sin\left(2\pi k\frac{\log(x)-\frac{1}{2}\log(3)+\frac{1}{2}\log(5)}{\log(2)}\right)}{k\sin\left(\frac{\pi k\log(3)}{\log(2)}\right)\sin\left(\frac{\pi k\log(5)}{\log(2)}\right)}\right)\\
&\quad-\frac{1}{16\pi}\sum_{k=1}^{\infty}\left(\frac{\sin\left(2\pi k\frac{\log(x)+\frac{1}{2}\log(5)-\frac{1}{2}\log(7)}{\log(2)}\right)}{k\sin\left(\frac{\pi k\log(5)}{\log(2)}\right)\sin\left(\frac{\pi k\log(7)}{\log(2)}\right)}+\frac{\sin\left(2\pi k\frac{\log(x)-\frac{1}{2}\log(5)+\frac{1}{2}\log(7)}{\log(2)}\right)}{k\sin\left(\frac{\pi k\log(5)}{\log(2)}\right)\sin\left(\frac{\pi k\log(7)}{\log(2)}\right)}\right)\\
&\quad-\frac{1}{16\pi}\sum_{k=1}^{\infty}\left(\frac{\sin\left(2\pi k\frac{\log(x)+\frac{1}{2}\log(3)-\frac{1}{2}\log(7)}{\log(2)}\right)}{k\sin\left(\frac{\pi k\log(3)}{\log(2)}\right)\sin\left(\frac{\pi k\log(7)}{\log(2)}\right)}+\frac{\sin\left(2\pi k\frac{\log(x)-\frac{1}{2}\log(3)+\frac{1}{2}\log(7)}{\log(2)}\right)}{k\sin\left(\frac{\pi k\log(3)}{\log(2)}\right)\sin\left(\frac{\pi k\log(7)}{\log(2)}\right)}\right)\\
&\quad-\frac{1}{16\pi}\sum_{k=1}^{\infty}\left(\frac{\sin\left(2\pi k\frac{\log(x)+\frac{1}{2}\log(2)-\frac{1}{2}\log(5)}{\log(3)}\right)}{k\sin\left(\frac{\pi k\log(2)}{\log(3)}\right)\sin\left(\frac{\pi k\log(5)}{\log(3)}\right)}+\frac{\sin\left(2\pi k\frac{\log(x)-\frac{1}{2}\log(2)+\frac{1}{2}\log(5)}{\log(3)}\right)}{k\sin\left(\frac{\pi k\log(2)}{\log(3)}\right)\sin\left(\frac{\pi k\log(5)}{\log(3)}\right)}\right)\\
&\quad-\frac{1}{16\pi}\sum_{k=1}^{\infty}\left(\frac{\sin\left(2\pi k\frac{\log(x)+\frac{1}{2}\log(5)-\frac{1}{2}\log(7)}{\log(3)}\right)}{k\sin\left(\frac{\pi k\log(5)}{\log(3)}\right)\sin\left(\frac{\pi k\log(7)}{\log(3)}\right)}+\frac{\sin\left(2\pi k\frac{\log(x)-\frac{1}{2}\log(5)+\frac{1}{2}\log(7)}{\log(3)}\right)}{k\sin\left(\frac{\pi k\log(5)}{\log(3)}\right)\sin\left(\frac{\pi k\log(7)}{\log(3)}\right)}\right)\\
&\quad-\frac{1}{16\pi}\sum_{k=1}^{\infty}\left(\frac{\sin\left(2\pi k\frac{\log(x)+\frac{1}{2}\log(2)-\frac{1}{2}\log(7)}{\log(3)}\right)}{k\sin\left(\frac{\pi k\log(2)}{\log(3)}\right)\sin\left(\frac{\pi k\log(7)}{\log(3)}\right)}+\frac{\sin\left(2\pi k\frac{\log(x)-\frac{1}{2}\log(2)+\frac{1}{2}\log(7)}{\log(3)}\right)}{k\sin\left(\frac{\pi k\log(2)}{\log(3)}\right)\sin\left(\frac{\pi k\log(7)}{\log(3)}\right)}\right)\\
&\quad-\frac{1}{16\pi}\sum_{k=1}^{\infty}\left(\frac{\sin\left(2\pi k\frac{\log(x)+\frac{1}{2}\log(2)-\frac{1}{2}\log(3)}{\log(5)}\right)}{k\sin\left(\frac{\pi k\log(2)}{\log(5)}\right)\sin\left(\frac{\pi k\log(3)}{\log(5)}\right)}+\frac{\sin\left(2\pi k\frac{\log(x)-\frac{1}{2}\log(2)+\frac{1}{2}\log(3)}{\log(5)}\right)}{k\sin\left(\frac{\pi k\log(2)}{\log(5)}\right)\sin\left(\frac{\pi k\log(3)}{\log(5)}\right)}\right)\\
\end{split}
\end{displaymath}
\begin{displaymath}
\begin{split}
\hspace{1.75cm}&\quad-\frac{1}{16\pi}\sum_{k=1}^{\infty}\left(\frac{\sin\left(2\pi k\frac{\log(x)+\frac{1}{2}\log(3)-\frac{1}{2}\log(7)}{\log(5)}\right)}{k\sin\left(\frac{\pi k\log(3)}{\log(5)}\right)\sin\left(\frac{\pi k\log(7)}{\log(5)}\right)}+\frac{\sin\left(2\pi k\frac{\log(x)-\frac{1}{2}\log(3)+\frac{1}{2}\log(7)}{\log(5)}\right)}{k\sin\left(\frac{\pi k\log(3)}{\log(5)}\right)\sin\left(\frac{\pi k\log(7)}{\log(5)}\right)}\right)\\
&\quad-\frac{1}{16\pi}\sum_{k=1}^{\infty}\left(\frac{\sin\left(2\pi k\frac{\log(x)+\frac{1}{2}\log(2)-\frac{1}{2}\log(7)}{\log(5)}\right)}{k\sin\left(\frac{\pi k\log(2)}{\log(5)}\right)\sin\left(\frac{\pi k\log(7)}{\log(5)}\right)}+\frac{\sin\left(2\pi k\frac{\log(x)-\frac{1}{2}\log(2)+\frac{1}{2}\log(7)}{\log(5)}\right)}{k\sin\left(\frac{\pi k\log(2)}{\log(5)}\right)\sin\left(\frac{\pi k\log(7)}{\log(5)}\right)}\right)\\
&\quad-\frac{1}{16\pi}\sum_{k=1}^{\infty}\left(\frac{\sin\left(2\pi k\frac{\log(x)+\frac{1}{2}\log(2)-\frac{1}{2}\log(3)}{\log(7)}\right)}{k\sin\left(\frac{\pi k\log(2)}{\log(7)}\right)\sin\left(\frac{\pi k\log(3)}{\log(7)}\right)}+\frac{\sin\left(2\pi k\frac{\log(x)-\frac{1}{2}\log(2)+\frac{1}{2}\log(3)}{\log(7)}\right)}{k\sin\left(\frac{\pi k\log(2)}{\log(7)}\right)\sin\left(\frac{\pi k\log(3)}{\log(7)}\right)}\right)\\
&\quad-\frac{1}{16\pi}\sum_{k=1}^{\infty}\left(\frac{\sin\left(2\pi k\frac{\log(x)+\frac{1}{2}\log(3)-\frac{1}{2}\log(5)}{\log(7)}\right)}{k\sin\left(\frac{\pi k\log(3)}{\log(7)}\right)\sin\left(\frac{\pi k\log(5)}{\log(7)}\right)}+\frac{\sin\left(2\pi k\frac{\log(x)-\frac{1}{2}\log(3)+\frac{1}{2}\log(5)}{\log(7)}\right)}{k\sin\left(\frac{\pi k\log(3)}{\log(7)}\right)\sin\left(\frac{\pi k\log(5)}{\log(7)}\right)}\right)\\
&\quad-\frac{1}{16\pi}\sum_{k=1}^{\infty}\left(\frac{\sin\left(2\pi k\frac{\log(x)+\frac{1}{2}\log(2)-\frac{1}{2}\log(5)}{\log(7)}\right)}{k\sin\left(\frac{\pi k\log(2)}{\log(7)}\right)\sin\left(\frac{\pi k\log(5)}{\log(7)}\right)}+\frac{\sin\left(2\pi k\frac{\log(x)-\frac{1}{2}\log(2)+\frac{1}{2}\log(5)}{\log(7)}\right)}{k\sin\left(\frac{\pi k\log(2)}{\log(7)}\right)\sin\left(\frac{\pi k\log(5)}{\log(7)}\right)}\right)\\
&\quad+\frac{1}{8\pi}\sum_{k=1}^{\infty}\frac{\cos\left(\frac{\pi k\log(3)}{\log(2)}\right)\cos\left(\frac{\pi k\log(5)}{\log(2)}\right)\cos\left(\frac{\pi k\log(7)}{\log(2)}\right)\cos\left(2\pi k\frac{\log(x)}{\log(2)}\right)}{k\sin\left(\frac{\pi k\log(3)}{\log(2)}\right)\sin\left(\frac{\pi k\log(5)}{\log(2)}\right)\sin\left(\frac{\pi k\log(7)}{\log(2)}\right)}\\
&\quad+\frac{1}{8\pi}\sum_{k=1}^{\infty}\frac{\cos\left(\frac{\pi k\log(2)}{\log(3)}\right)\cos\left(\frac{\pi k\log(5)}{\log(3)}\right)\cos\left(\frac{\pi k\log(7)}{\log(3)}\right)\cos\left(2\pi k\frac{\log(x)}{\log(3)}\right)}{k\sin\left(\frac{\pi k\log(2)}{\log(3)}\right)\sin\left(\frac{\pi k\log(5)}{\log(3)}\right)\sin\left(\frac{\pi k\log(7)}{\log(3)}\right)}\\
&\quad+\frac{1}{8\pi}\sum_{k=1}^{\infty}\frac{\cos\left(\frac{\pi k\log(2)}{\log(5)}\right)\cos\left(\frac{\pi k\log(3)}{\log(5)}\right)\cos\left(\frac{\pi k\log(7)}{\log(5)}\right)\cos\left(2\pi k\frac{\log(x)}{\log(5)}\right)}{k\sin\left(\frac{\pi k\log(2)}{\log(5)}\right)\sin\left(\frac{\pi k\log(3)}{\log(5)}\right)\sin\left(\frac{\pi k\log(7)}{\log(5)}\right)}\\
&\quad+\frac{1}{8\pi}\sum_{k=1}^{\infty}\frac{\cos\left(\frac{\pi k\log(2)}{\log(7)}\right)\cos\left(\frac{\pi k\log(3)}{\log(7)}\right)\cos\left(\frac{\pi k\log(5)}{\log(7)}\right)\cos\left(2\pi k\frac{\log(x)}{\log(7)}\right)}{k\sin\left(\frac{\pi k\log(2)}{\log(7)}\right)\sin\left(\frac{\pi k\log(3)}{\log(7)}\right)\sin\left(\frac{\pi k\log(5)}{\log(7)}\right)}+\frac{1}{2}\chi_{S_{2,3,5,7}}(x).
\end{split}
\end{displaymath}
\end{corollary}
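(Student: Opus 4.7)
The corollary is a direct specialization of the preceding Theorem on $N_{a,b,c,d}(x)$ with $(a,b,c,d)=(2,3,5,7)$, so the plan is simply to verify that this choice meets the hypotheses of that theorem and to transcribe the substitution. First, $2,3,5,7\in\mathbb{N}$ satisfy $2<3<5<7$ and $\gcd(2,3,5,7)=1$, so the theorem applies for every real $x\geq 1$. Second, $S_{2,3,5,7}=\{2^{p}3^{q}5^{l}7^{f}:p,q,l,f\in\mathbb{N}_{0}\}$ is exactly the set of $7$-smooth numbers, so the generic characteristic function $\chi_{S_{a,b,c,d}}$ becomes $\chi_{S_{2,3,5,7}}$ and $N_{a,b,c,d}(x)$ becomes the $7$-smooth counting function $N_{2,3,5,7}(x)$.

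The remainder is a purely symbolic substitution, which I would organize by walking through the groups of terms in the theorem in the order they appear: (i) the polynomial main terms in $\log(x)$, with coefficients such as $1/(24\log a\log b\log c\log d)$, $1/(12\log a\log b\log c)$, and $\log(a)/(24\log b\log c\log d)$; (ii) the $x$-independent constant group, comprising $1/16$, the twelve inverse-pair terms $\log(a)/(48\log b)$, the six inverse-pair-product terms $\log(a)\log(b)/(144\log c\log d)$, and the four cubic Bernoulli-type corrections $-\log(a)^{3}/(720\log b\log c\log d)$; (iii) the four sawtooth corrections $-\tfrac{7}{8}B^{*}_{1}(\{\log(x)/\log(\cdot)\})$; (iv) the six single-cosine series indexed by the unordered pairs from $\{a,b,c,d\}$; and (v) the twelve sine series with double-sine kernels together with the four triple-cosine-in-numerator series, one per pivot. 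Each term is obtained from its generic counterpart by the mechanical replacement $a\mapsto 2$, $b\mapsto 3$, $c\mapsto 5$, $d\mapsto 7$.

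The only non-routine point is the interpretation of the conditionally convergent trigonometric series. As stipulated after the theorem, each such series is to be read as an Abel-type limit $R\to\infty$ with truncation indices $\lfloor R\log a\rfloor$, $\lfloor R\log b\rfloor$, $\lfloor R\log c\rfloor$, $\lfloor R\log d\rfloor$ applied to the corresponding pivot sum; in the specialization these become $\lfloor R\log 2\rfloor$, $\lfloor R\log 3\rfloor$, $\lfloor R\log 5\rfloor$, $\lfloor R\log 7\rfloor$, and no additional analytic argument is required. The denominators $\sin(\pi k\log p/\log q)$ never vanish for $k\in\mathbb{N}$, since $\log p/\log q\notin\mathbb{Q}$ for distinct primes $p,q\in\{2,3,5,7\}$ (otherwise $p^{n}=q^{m}$ would hold), so every summand is well-defined. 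The genuine obstacle lies entirely in establishing the theorem itself; once that is in hand, the corollary follows by bookkeeping alone.
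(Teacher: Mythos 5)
Your proposal matches the paper exactly: the corollary is obtained there with no separate proof, simply by the substitution $a=2$, $b=3$, $c=5$, $d=7$ into the preceding theorem for $N_{a,b,c,d}(x)$, with the series interpreted via the same $\lfloor R\log(\cdot)\rfloor$ truncation convention. Your added checks (the hypotheses $2<3<5<7$, $\gcd=1$, and the non-vanishing of the $\sin$ denominators because $\log p/\log q$ is irrational for distinct primes) are correct and only make the bookkeeping more explicit than the paper's.
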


\noindent This formula converges very rapidly.\\
\noindent Every series is interpreted as mentioned above.

\newpage

\noindent Using this formula for the $7$-Smooth Numbers Counting Function $N_{2,3,5,7}(x)$, we get the following table:

\begin{table}[htbp]
\begin{center}
\begin{tabular}{| c | c | c | c |}
\hline
$x$&$N_{2,3,5,7}(x)$&$\text{Formula for }N_{2,3,5,7}(x)$&$\text{Number of terms $R$}$\\
\hline
\bfseries $1$&$1$&$1.030388812940249824617233653730019551$&$R=3$\\
\hline
\bfseries $10$&$10$&$10.01263249440259984789405319823431872556$&$R=3$\\
\hline
\bfseries $10^{2}$&$46$&$46.03668521491726375130238293886497852216$&$R=20$\\
\hline
\bfseries $10^{3}$&$141$&$141.01285390547424275647701138240776403195$&$R=80$\\
\hline
\bfseries $10^{4}$&$338$&$338.0186997720522261698185344005048234745$&$R=80$\\
\hline
\bfseries $10^{5}$&$694$&$694.00540895426731024839939099335158382934$&$R=100$\\
\hline
\bfseries $10^{6}$&$1273$&$1273.02115574787663113791230619711970129327$&$R=1500$\\
\hline
\bfseries $10^{7}$&$2155$&$2155.01133325568473975698180880511876853632$&$R=1500$\\
\hline
\bfseries $10^{8}$&$3427$&$3427.01611847162744035197962908126411814549$&$R=1500$\\
\hline
\bfseries $10^{9}$&$5194$&$5194.03771424320772544603355297308020543638$&$R=1600$\\
\hline
\bfseries $10^{10}$&$7575$&$7575.01767118495435682818874877606239707862$&$R=9000$\\
\hline
\end{tabular}
\caption{Values of $N_{2,3,5,7}(x)$}\end{center}
\end{table}

\section{The Formula for the Distribution of all Smooth\\
Numbers}

\noindent Let $a_{1},a_{2},a_{3},\ldots,a_{n}\in\mathbb{N}$ such that $a_{1}<a_{2}<a_{3}<\ldots<a_{n}$ and $\gcd(a_{1},a_{2},a_{3},\ldots,a_{n})=1$.\\
\noindent For $x\in\mathbb{R}^{+}_{0}$, we define the function $N_{a_{1},a_{2},a_{3},\ldots,a_{n}}(x)$ by
\begin{displaymath}
\begin{split}
N_{a_{1},a_{2},a_{3},\ldots,a_{n}}(x):&=\sum_{\substack{a_{1}^{q_{1}}a_{2}^{q_{2}}a_{3}^{q_{3}}\cdots a_{n}^{q_{n}}\leq x\\q_{1}\in\mathbb{N}_{0},q_{2}\in\mathbb{N}_{0},q_{3}\in\mathbb{N}_{0},\ldots,q_{n}\in\mathbb{N}_{0}}}1.
\end{split}
\end{displaymath}

\noindent We define also
\begin{displaymath}
\begin{split}
S_{a_{1},a_{2},a_{3},\ldots,a_{n}}:&=\left\{a_{1}^{q_{1}}a_{2}^{q_{2}}a_{3}^{q_{3}}\cdots a_{n}^{q_{n}}:q_{1}\in\mathbb{N}_{0},q_{2}\in\mathbb{N}_{0},q_{3}\in\mathbb{N}_{0},\ldots,q_{n}\in\mathbb{N}_{0}\right\},\\
\chi_{S_{a_{1},a_{2},a_{3},\ldots,a_{n}}}(x):&=\begin{cases}1&\text{if $x\in S_{a_{1},a_{2},a_{3},\ldots,a_{n}}$}\\0&\text{if $x\notin S_{a_{1},a_{2},a_{3},\ldots,a_{n}}$}\end{cases}.
\end{split}
\end{displaymath}

\noindent Thus, we have that
\begin{displaymath}
\begin{split}
N_{a_{1},a_{2},a_{3},\ldots,a_{n}}(x)&=\sum_{k_{1}=0}^{\left\lfloor\log_{a_{1}}(x)\right\rfloor}\sum_{k_{2}=0}^{\left\lfloor\log_{a_{2}}\left(\frac{x}{a_{1}^{k_{1}}}\right)\right\rfloor}\ldots\sum_{k_{n-1}=0}^{\left\lfloor\log_{a_{n-1}}\left(\frac{x}{a_{1}^{k_{1}}a_{2}^{k_{2}}\cdots a_{n-2}^{k_{n-2}}}\right)\right\rfloor}\left(\left\lfloor\log_{a_{n}}\left(\frac{x}{a_{1}^{k_{1}}a_{2}^{k_{2}}a_{3}^{k_{3}}\cdots a_{n-1}^{k_{n-1}}}\right)\right\rfloor+1\right).
\end{split}
\end{displaymath}

\noindent Expressions of this form for $N_{a_{1},a_{2},a_{3},\ldots,a_{n}}(x)$ are called "Klauder-Ness Expressions" \cite{15,16}.

\noindent We have the following

\begin{theorem}(Formula for $N_{a_{1},a_{2},a_{3},\ldots,a_{n}}(x)$)\\
\noindent For every real number $x\geq1$, we have that
\begin{displaymath}
\begin{split}
N_{a_{1},a_{2},a_{3},\ldots,a_{n}}(x)&=\Res_{s=0}\left(\frac{x^{s}}{s\prod_{k=1}^{n}\left(1-\frac{1}{a_{k}^{s}}\right)}\right)-\frac{1}{2^{n-1}}\sum_{k=1}^{n}B^{*}_{1}\left(\left\{\frac{\log(x)}{\log(a_{k})}\right\}\right)\\
&\quad+\frac{1}{2^{n-1}\pi}\sum_{m=1}^{n}\sum_{r=1}^{n-1}\sum_{\substack{
i_{1}<i_{2}<i_{3}<\ldots<i_{r}\\
\{i_{1},i_{2},i_{3},\ldots,i_{r}\}\\
\subset\{a_{1},a_{2},a_{3},\ldots,\widehat{a_{m}},\ldots,a_{n}\}
}}\sum_{k=1}^{\infty}\frac{\sin\left(2\pi k\frac{\log(x)}{\log(a_{m})}-\frac{\pi r}{2}\right)}{k}\prod_{l=1}^{r}\cot\left(\frac{\pi k\log(i_{l})}{\log(a_{m})}\right)\\
&\quad+\frac{1}{2}\chi_{S_{a_{1},a_{2},a_{3},\ldots,a_{n}}}(x),
\end{split}
\end{displaymath}
\noindent where the series are to be interpreted as meaning\begin{displaymath}
\begin{split}
&\sum_{k=1}^{\infty}\frac{\sin\left(2\pi k\frac{\log(x)}{\log(a_{m})}-\frac{\pi r}{2}\right)}{k}\prod_{l=1}^{r}\cot\left(\frac{\pi k\log(i_{l})}{\log(a_{m})}\right)\\
=&\lim_{R\rightarrow\infty}\left(\sum_{k=1}^{\left\lfloor R\log(a_{m})\right\rfloor}\frac{\sin\left(2\pi k\frac{\log(x)}{\log(a_{m})}-\frac{\pi r}{2}\right)}{k}\prod_{l=1}^{r}\cot\left(\frac{\pi k\log(i_{l})}{\log(a_{m})}\right)\right),
\end{split}
\end{displaymath}
\noindent when $R\rightarrow\infty$ in an appropriate manner.
\end{theorem}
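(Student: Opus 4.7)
The plan is to realize $N_{a_{1},\ldots,a_{n}}(x)$ by Mellin--Perron summation applied to the Dirichlet generating series
\[
F(s)\;=\;\sum_{q_{1},\ldots,q_{n}\geq 0}\bigl(a_{1}^{q_{1}}\cdots a_{n}^{q_{n}}\bigr)^{-s}\;=\;\prod_{k=1}^{n}\frac{1}{1-a_{k}^{-s}},
\]
which converges absolutely for $\Re(s)>0$. Perron's formula with the standard half-weight at the endpoint (this accounts for the term $\tfrac12\chi_{S_{a_{1},\ldots,a_{n}}}(x)$) gives
\[
N_{a_{1},\ldots,a_{n}}(x)-\tfrac12\chi_{S_{a_{1},\ldots,a_{n}}}(x)\;=\;\frac{1}{2\pi i}\int_{c-i\infty}^{c+i\infty} F(s)\,\frac{x^{s}}{s}\,ds
\]
for any $c>0$. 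The strategy is to shift the contour to $\Re(s)=-\infty$ and collect residues, in the spirit of the proofs of Theorems~1, 3 and 7 above.

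The integrand has a pole of order $n+1$ at $s=0$ whose residue is, by definition, the leading term $\Res_{s=0}\bigl(x^{s}/[s\prod(1-a_{k}^{-s})]\bigr)$ of the theorem; expanding $1/(1-a_{k}^{-s})=1/(s\log a_{k})+1/2+s\log(a_{k})/12+O(s^{2})$ and $x^{s}=\sum_{j}s^{j}\log^{j}(x)/j!$ recovers the polynomial parts appearing in the special cases. The remaining poles are simple and lie at $s_{k}^{(m)}=2\pi ik/\log(a_{m})$ for $k\in\mathbb{Z}\setminus\{0\}$, $m\in\{1,\ldots,n\}$. Since $\frac{d}{ds}(1-a_{m}^{-s})$ equals $\log(a_{m})$ at $s=s_{k}^{(m)}$, one has
\[
\Res_{s=s_{k}^{(m)}}\bigl(F(s)x^{s}/s\bigr)\;=\;\frac{e^{2\pi ik\log(x)/\log(a_{m})}}{2\pi i k\,\prod_{j\neq m}\bigl(1-e^{-2\pi ik\log(a_{j})/\log(a_{m})}\bigr)}.
\]

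To put this into the shape asserted in the theorem, I would use the elementary identity $1/(1-e^{-2i\theta})=\tfrac12(1-i\cot\theta)$ to expand, with $\theta_{j}=\pi k\log(a_{j})/\log(a_{m})$,
\[
\prod_{j\neq m}\frac{1}{1-e^{-2i\theta_{j}}}\;=\;\frac{1}{2^{n-1}}\sum_{r=0}^{n-1}(-i)^{r}\sum_{\substack{S\subset\{a_{j}:j\neq m\}\\|S|=r}}\prod_{l\in S}\cot\theta_{l}.
\]
Pairing the residues at $s_{k}^{(m)}$ and $s_{-k}^{(m)}$ and using the identity $(-i)^{r}e^{i\alpha}-i^{r}e^{-i\alpha}=2i\sin(\alpha-\pi r/2)$ converts each pair into a real term whose total is
\[
\frac{1}{2^{n-1}\pi}\sum_{m,r,S}\sum_{k=1}^{\infty}\frac{\sin\bigl(2\pi k\log(x)/\log(a_{m})-\pi r/2\bigr)}{k}\prod_{l\in S}\cot\!\Bigl(\frac{\pi k\log(i_{l})}{\log(a_{m})}\Bigr).
\]
The $r=0$ slice of this sum collapses, via the sawtooth Fourier expansion $\sum_{k\geq 1}\sin(2\pi kt)/(\pi k)=-B_{1}^{*}(\{t\})$, to exactly $-\tfrac{1}{2^{n-1}}\sum_{m}B_{1}^{*}(\{\log(x)/\log(a_{m})\})$, while the $r\geq 1$ slice is the main iterated sum displayed in the statement.

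The principal technical hurdle will be to justify the contour shift together with the conditional convergence of the resulting series. The imaginary poles $s_{k}^{(m)}$ accumulate along $i\mathbb{R}$ and can come arbitrarily close to one another (and coincide exactly when some $\log(a_{j})/\log(a_{m})$ is rational). To deal with this, I would truncate the integral at height $\pm iT$ along a rectangular contour avoiding every pole, take $R=T/(2\pi)$ so that each inner $k$-sum stops at $\lfloor R\log(a_{m})\rfloor$ --- this is precisely the ``appropriate manner'' of summation specified in the statement --- and use classical growth estimates on $F(s)$ along the horizontal and far-left segments to show that those pieces vanish as $T\to\infty$. What remains is exactly the sum of residues computed above, which reassembles into the announced formula.
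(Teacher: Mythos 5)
Your proposal is correct and follows essentially the same route as the paper: the Dirichlet series $\prod_{k}(1-a_{k}^{-s})^{-1}$, Perron's formula with the half-weight at the jump accounting for $\tfrac12\chi_{S}(x)$, the residue at $s=0$ for the polynomial part, and the paired residues at $s=\pm 2\pi ik/\log(a_{m})$ summed with the symmetric truncation $\lfloor R\log(a_{m})\rfloor$. The one (welcome) difference is in how the residue pairs are converted into the cotangent sums: the paper starts from the claimed trigonometric expression, expands everything out, and invokes an inductive lemma ($S_{1}=2^{n}$, $S_{2}=2^{n}\prod_{k}x_{k}$), whereas you derive it directly from $1/(1-e^{-2i\theta})=\tfrac12(1-i\cot\theta)$ and multiply out the $n-1$ factors, which is cleaner and also makes transparent that the $r=0$ slice collapses to the $-\tfrac{1}{2^{n-1}}\sum_{m}B_{1}^{*}$ correction.
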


\noindent This formula converges again very rapidly.\\

\begin{proof}
We have that
\begin{displaymath}
\begin{split}
\sum_{k=1}^{\infty}\frac{\chi_{S_{a_{1},a_{2},a_{3},\ldots,a_{n}}}(k)}{k^{s}}&=\prod_{k=1}^{n}\left(\sum_{m=0}^{\infty}\frac{1}{a_{k}^{ms}}\right)\\
&=\prod_{k=1}^{n}\frac{1}{1-e^{-\log(a_{k})s}}.
\end{split}
\end{displaymath}
Therefore, by Perron's formula, we get that
\begin{displaymath}
\begin{split}
N_{a_{1},a_{2},a_{3},\ldots,a_{n}}(x)&=\frac{1}{2\pi i}\int_{\gamma}\left(\prod_{k=1}^{n}\frac{1}{1-e^{-\log(a_{k})s}}\right)\frac{x^{s}}{s}ds\\
&=\Res_{s=0}\left(\frac{x^{s}}{s\prod_{k=1}^{n}\left(1-\frac{1}{a_{k}^{s}}\right)}\right)\\
&\quad+\sum_{m=1}^{n}\sum_{k=1}^{\infty}\left(\Res_{s=\frac{2\pi ik}{\log(a_{m})}}\left(\frac{x^{s}}{s\prod_{k=1}^{n}\left(1-\frac{1}{a_{k}^{s}}\right)}\right)+\Res_{s=-\frac{2\pi ik}{\log(a_{m})}}\left(\frac{x^{s}}{s\prod_{k=1}^{n}\left(1-\frac{1}{a_{k}^{s}}\right)}\right)\right)\\
&\quad+\frac{1}{2}\chi_{S_{a_{1},a_{2},a_{3},\ldots,a_{n}}}(x),
\end{split}
\end{displaymath}
where $\gamma=\text{line from $1-i\infty$ to $1+i\infty$}$.\\
Using the relation
\begin{displaymath}
\begin{split}
\lim_{s\rightarrow\pm\frac{2\pi ik}{\log(a_{m})}}\left(\frac{s\mp\frac{2\pi ik}{\log(a_{m})}}{1-e^{-\log(a_{m})s}}\right)&=\frac{1}{\log(a_{m})}\;\;\forall k\in\mathbb{N},
\end{split}
\end{displaymath}
we can compute the "$a_{m}$ -Residues" to
\begin{displaymath}
\begin{split}
\Res_{s=\frac{2\pi ik}{\log(a_{m})}}\left(\frac{x^{s}}{s\prod_{k=1}^{n}\left(1-\frac{1}{a_{k}^{s}}\right)}\right)&=-\frac{i\prod_{\substack{l=1\\l\neq m}}^{n}a_{l}^{\frac{2\pi ik}{\log(a_{m})}}x^{\frac{2\pi ik}{\log(a_{m})}}}{2\pi k\prod_{\substack{l=1\\l\neq m}}^{n}\left(a_{l}^{\frac{2\pi ik}{\log(a_{m})}}-1\right)}\;\;\text{for all $k\in\mathbb{N}$}\\
\Res_{s=-\frac{2\pi ik}{\log(a_{m})}}\left(\frac{x^{s}}{s\prod_{k=1}^{n}\left(1-\frac{1}{a_{k}^{s}}\right)}\right)&=-\frac{(-1)^{n}ix^{-\frac{2\pi ik}{\log(a_{m})}}}{2\pi k\prod_{\substack{l=1\\l\neq m}}^{n}\left(a_{l}^{\frac{2\pi ik}{\log(a_{m})}}-1\right)}\;\;\text{for all $k\in\mathbb{N}$}.
\end{split}
\end{displaymath}
Using the relations
\begin{displaymath}
\begin{split}
\sin\left(2\pi k\frac{\log(x)}{\log(a_{m})}\right)&=\frac{1}{2}ix^{-\frac{2\pi ik}{\log(a_{m})}}-\frac{1}{2}ix^{\frac{2\pi ik}{\log(a_{m})}}\\
\cos\left(2\pi k\frac{\log(x)}{\log(a_{m})}\right)&=\frac{1}{2}x^{-\frac{2\pi ik}{\log(a_{m})}}+\frac{1}{2}x^{\frac{2\pi ik}{\log(a_{m})}}\\
\cot\left(\frac{\pi k\log(i_{l})}{\log(a_{m})}\right)&=i\frac{i_{l}^{\frac{2\pi ik}{\log(a_{m})}}+1}{i_{l}^{\frac{2\pi ik}{\log(a_{m})}}-1}
\end{split}
\end{displaymath}
\begin{displaymath}
\begin{split}
\sin\left(2\pi k\frac{\log(x)}{\log(a_{m})}-\frac{\pi r}{2}\right)&=\begin{cases}(-1)^{\frac{r}{2}}\sin\left(2\pi k\frac{\log(x)}{\log(a_{m})}\right),&\text{if $r=\text{even}$}\\(-1)^{\frac{r+1}{2}}\cos\left(2\pi k\frac{\log(x)}{\log(a_{m})}\right),&\text{if $r=\text{odd}$}
\end{cases}\\
&=\begin{cases}(-1)^{\frac{r}{2}}\left(\frac{1}{2}ix^{-\frac{2\pi ik}{\log(a_{m})}}-\frac{1}{2}ix^{\frac{2\pi ik}{\log(a_{m})}}\right),&\text{if $r=\text{even}$}\\(-1)^{\frac{r+1}{2}}\left(\frac{1}{2}x^{-\frac{2\pi ik}{\log(a_{m})}}+\frac{1}{2}x^{\frac{2\pi ik}{\log(a_{m})}}\right),&\text{if $r=\text{odd}$}\end{cases}
\end{split}
\end{displaymath}
and
\begin{displaymath}
\begin{split}
\sin\left(2\pi k\frac{\log(x)}{\log(a_{m})}-\frac{\pi r}{2}\right)&=(-1)^{\frac{1}{2}(r\;\text{mod $4$}+r\;\text{mod $2$})}\left(\frac{1}{2}i^{(r+1)\text{mod $2$}}x^{-\frac{2\pi ik}{\log(a_{m})}}+(-1)^{r+1}\frac{1}{2}i^{(r+1)\text{mod $2$}}x^{\frac{2\pi ik}{\log(a_{m})}}\right),
\end{split}
\end{displaymath}
we establish (by expanding everything out) that
\begin{displaymath}
\begin{split}
&\sum_{r=1}^{n-1}\sum_{\substack{
i_{1}<i_{2}<i_{3}<\ldots<i_{r}\\
\{i_{1},i_{2},i_{3},\ldots,i_{r}\}\\
\subset\{a_{1},a_{2},a_{3},\ldots,\widehat{a_{m}},\ldots,a_{n}\}
}}\sin\left(2\pi k\frac{\log(x)}{\log(a_{m})}-\frac{\pi r}{2}\right)\prod_{l=1}^{r}\cot\left(\frac{\pi k\log(i_{l})}{\log(a_{m})}\right)+\sin\left(2\pi k\frac{\log(x)}{\log(a_{m})}\right)\\
&=\sum_{r=1}^{n-1}\sum_{\substack{
i_{1}<i_{2}<i_{3}<\ldots<i_{r}\\
\{i_{1},i_{2},i_{3},\ldots,i_{r}\}\\
\subset\{a_{1},a_{2},a_{3},\ldots,\widehat{a_{m}},\ldots,a_{n}\}
}}(-1)^{\frac{1}{2}(r\;\text{mod $4$}+r\;\text{mod $2$})}\left(\frac{1}{2}i^{(r+1)\text{mod $2$}}x^{-\frac{2\pi ik}{\log(a_{m})}}+(-1)^{r+1}\frac{1}{2}i^{(r+1)\text{mod $2$}}x^{\frac{2\pi ik}{\log(a_{m})}}\right)\\
&\quad\cdot\prod_{l=1}^{r}\left(i\frac{i_{l}^{\frac{2\pi ik}{\log(a_{m})}}+1}{i_{l}^{\frac{2\pi ik}{\log(a_{m})}}-1}\right)+\left(\frac{1}{2}ix^{-\frac{2\pi ik}{\log(a_{m})}}-\frac{1}{2}ix^{\frac{2\pi ik}{\log(a_{m})}}\right)\\
&=\sum_{r=1}^{n-1}\sum_{\substack{
i_{1}<i_{2}<i_{3}<\ldots<i_{r}\\
\{i_{1},i_{2},i_{3},\ldots,i_{r}\}\\
\subset\{a_{1},a_{2},a_{3},\ldots,\widehat{a_{m}},\ldots,a_{n}\}
}}(-1)^{\frac{1}{2}(r\;\text{mod $4$}+r\;\text{mod $2$})}\frac{1}{2}i^{r+(r+1)\text{mod $2$}}\prod_{l=1}^{r}\left(\frac{i_{l}^{\frac{2\pi ik}{\log(a_{m})}}+1}{i_{l}^{\frac{2\pi ik}{\log(a_{m})}}-1}\right)x^{-\frac{2\pi ik}{\log(a_{m})}}+\frac{1}{2}ix^{-\frac{2\pi ik}{\log(a_{m})}}\\
&\quad+\sum_{r=1}^{n-1}\sum_{\substack{
i_{1}<i_{2}<i_{3}<\ldots<i_{r}\\
\{i_{1},i_{2},i_{3},\ldots,i_{r}\}\\
\subset\{a_{1},a_{2},a_{3},\ldots,\widehat{a_{m}},\ldots,a_{n}\}
}}(-1)^{r+1+\frac{1}{2}(r\;\text{mod $4$}+r\;\text{mod $2$})}\frac{1}{2}i^{r+(r+1)\text{mod $2$}}\prod_{l=1}^{r}\left(\frac{i_{l}^{\frac{2\pi ik}{\log(a_{m})}}+1}{i_{l}^{\frac{2\pi ik}{\log(a_{m})}}-1}\right)x^{\frac{2\pi ik}{\log(a_{m})}}-\frac{1}{2}ix^{\frac{2\pi ik}{\log(a_{m})}}\\
&=(-1)^{n-1}\frac{1}{2}ix^{-\frac{2\pi ik}{\log(a_{m})}}\left(\frac{\sum_{\substack{\{\epsilon_{1},\epsilon_{2},\epsilon_{3},\ldots,\epsilon_{n-1}\}\\
\subset\{\pm1,\pm1,\pm1,\ldots,\pm1\}}}\prod_{\substack{l=1\\l\neq m}}^{n}\epsilon_{k}\left(a_{l}^{\frac{2\pi ik}{\log(a_{m})}}+\epsilon_{k}\right)}{\prod_{\substack{l=1\\l\neq m}}^{n}\left(a_{l}^{\frac{2\pi ik}{\log(a_{m})}}-1\right)}\right)\\
&\quad-\frac{1}{2}ix^{\frac{2\pi ik}{\log(a_{m})}}\left(\frac{\sum_{\substack{\{\epsilon_{1},\epsilon_{2},\epsilon_{3},\ldots,\epsilon_{n-1}\}\\
\subset\{\pm1,\pm1,\pm1,\ldots,\pm1\}}}\prod_{\substack{l=1\\l\neq m}}^{n}\left(a_{l}^{\frac{2\pi ik}{\log(a_{m})}}+\epsilon_{k}\right)}{\prod_{\substack{l=1\\l\neq m}}^{n}\left(a_{l}^{\frac{2\pi ik}{\log(a_{m})}}-1\right)}\right)\\
\end{split}
\end{displaymath}
\begin{displaymath}
\begin{split}
&=(-1)^{n-1}\frac{1}{2}ix^{-\frac{2\pi ik}{\log(a_{m})}}\left(\frac{S_{1}\left(a_{1}^{\frac{2\pi ik}{\log(a_{m})}},a_{2}^{\frac{2\pi ik}{\log(a_{m})}},a_{3}^{\frac{2\pi ik}{\log(a_{m})}},\ldots,\widehat{a_{m}^{\frac{2\pi ik}{\log(a_{m})}}},\ldots,a_{n}^{\frac{2\pi ik}{\log(a_{m})}}\right)}{\prod_{\substack{l=1\\l\neq m}}^{n}\left(a_{l}^{\frac{2\pi ik}{\log(a_{m})}}-1\right)}\right)\\
&\quad-\frac{1}{2}ix^{\frac{2\pi ik}{\log(a_{m})}}\left(\frac{S_{2}\left(a_{1}^{\frac{2\pi ik}{\log(a_{m})}},a_{2}^{\frac{2\pi ik}{\log(a_{m})}},a_{3}^{\frac{2\pi ik}{\log(a_{m})}},\ldots,\widehat{a_{m}^{\frac{2\pi ik}{\log(a_{m})}}},\ldots,a_{n}^{\frac{2\pi ik}{\log(a_{m})}}\right)}{\prod_{\substack{l=1\\l\neq m}}^{n}\left(a_{l}^{\frac{2\pi ik}{\log(a_{m})}}-1\right)}\right)\\
&=(-1)^{n-1}\frac{1}{2}ix^{-\frac{2\pi ik}{\log(a_{m})}}\left(\frac{2^{n-1}}{\prod_{\substack{l=1\\l\neq m}}^{n}\left(a_{l}^{\frac{2\pi ik}{\log(a_{m})}}-1\right)}\right)-\frac{1}{2}ix^{\frac{2\pi ik}{\log(a_{m})}}\left(\frac{2^{n-1}\prod_{\substack{l=1\\l\neq m}}^{n}a_{l}^{\frac{2\pi ik}{\log(a_{m})}}}{\prod_{\substack{l=1\\l\neq m}}^{n}\left(a_{l}^{\frac{2\pi ik}{\log(a_{m})}}-1\right)}\right)\\
&=(-1)^{n+1}\frac{2^{n-2}i\left(x^{-\frac{2\pi ik}{\log(a_{m})}}+(-1)^{n}\prod_{\substack{l=1\\l\neq m}}^{n}a_{l}^{\frac{2\pi ik}{\log(a_{m})}}x^{\frac{2\pi ik}{\log(a_{m})}}\right)}{\prod_{\substack{l=1\\l\neq m}}^{n}\left(a_{l}^{\frac{2\pi ik}{\log(a_{m})}}-1\right)}.\hspace{12cm}
\end{split}
\end{displaymath}
In the above calculation, we have used the two algebraic identities
\begin{displaymath}
\begin{split}
S_{1}(x_{1},x_{2},x_{3},\ldots,x_{n})&=2^{n},\\
S_{2}(x_{1},x_{2},x_{3},\ldots,x_{n})&=2^{n}\prod_{k=1}^{n}x_{k},
\end{split}
\end{displaymath}
where
\begin{displaymath}
\begin{split}
S_{1}(x_{1},x_{2},x_{3},\ldots,x_{n})&=\sum_{\substack{\{\epsilon_{1},\epsilon_{2},\epsilon_{3},\ldots,\epsilon_{n}\}\\
\subset\{\pm1,\pm1,\pm1,\ldots,\pm1\}}}\prod_{k=1}^{n}\epsilon_{k}(x_{k}+\epsilon_{k})\\
S_{2}(x_{1},x_{2},x_{3},\ldots,x_{n})&=\sum_{\substack{\{\epsilon_{1},\epsilon_{2},\epsilon_{3},\ldots,\epsilon_{n}\}\\
\subset\{\pm1,\pm1,\pm1,\ldots,\pm1\}}}\prod_{k=1}^{n}(x_{k}+\epsilon_{k}).
\end{split}
\end{displaymath}
These two identities follow by induction, because for $n=1$, we have that
\begin{displaymath}
\begin{split}
S_{1}(x_{1})&=(x_{1}+1)-(x_{1}-1)=2\\
S_{2}(x_{1})&=(x_{1}+1)+(x_{1}-1)=2x_{1}.
\end{split}
\end{displaymath}
These two identities are exactly the claimed formulas for $S_{1}(x_{1})$ and $S_{2}(x_{1})$.\\
Supposing now that the statement is also true for $S_{1}(x_{1},x_{2},x_{3},\ldots,x_{n-1})$ and $S_{2}(x_{1},x_{2},x_{3},\ldots,x_{n-1})$, we prove by induction
\begin{displaymath}
\begin{split}
S_{1}(x_{1},x_{2},x_{3},\ldots,x_{n-1},x_{n})&=(x_{n}+1)S_{1}(x_{1},x_{2},x_{3},\ldots,x_{n-1})-(x_{n}-1)S_{1}(x_{1},x_{2},x_{3},\ldots,x_{n-1})\\
&=\left(x_{n}+1-x_{n}+1\right)S_{1}(x_{1},x_{2},x_{3},\ldots,x_{n-1})\\
&=2S_{1}(x_{1},x_{2},x_{3},\ldots,x_{n-1})\\
&=2\cdot2^{n-1}\\
&=2^{n},\\
S_{2}(x_{1},x_{2},x_{3},\ldots,x_{n-1},x_{n})&=(x_{n}+1)S_{2}(x_{1},x_{2},x_{3},\ldots,x_{n-1})+(x_{n}-1)S_{2}(x_{1},x_{2},x_{3},\ldots,x_{n-1})\\
&=\left(x_{n}+1+x_{n}-1\right)S_{2}(x_{1},x_{2},x_{3},\ldots,x_{n-1})\\
&=2x_{n}S_{2}(x_{1},x_{2},x_{3},\ldots,x_{n-1})\\
&=2x_{n}2^{n-1}\prod_{k=1}^{n-1}x_{k}\\
&=2^{n}\prod_{k=1}^{n}x_{k}.
\end{split}
\end{displaymath}
These are the claimed statements for $S_{1}(x_{1},x_{2},x_{3},\ldots,x_{n-1},x_{n})$ and $S_{2}(x_{1},x_{2},x_{3},\ldots,x_{n-1},x_{n})$. Therefore, the inductive proof is finished.

\vspace{0.4cm}

\noindent The above established identity implies that
\begin{displaymath}
\begin{split}
&\Res_{s=\frac{2\pi ik}{\log(a_{m})}}\left(\frac{x^{s}}{s\prod_{k=1}^{n}\left(1-\frac{1}{a_{k}^{s}}\right)}\right)+\Res_{s=-\frac{2\pi ik}{\log(a_{m})}}\left(\frac{x^{s}}{s\prod_{k=1}^{n}\left(1-\frac{1}{a_{k}^{s}}\right)}\right)\\
&=(-1)^{n+1}\frac{i\left(x^{-\frac{2\pi ik}{\log(a_{m})}}+(-1)^{n}\prod_{\substack{l=1\\l\neq m}}^{n}a_{l}^{\frac{2\pi ik}{\log(a_{m})}}x^{\frac{2\pi ik}{\log(a_{m})}}\right)}{2\pi k\prod_{\substack{l=1\\l\neq m}}^{n}\left(a_{l}^{\frac{2\pi ik}{\log(a_{m})}}-1\right)}\\
&=\frac{1}{2^{n-1}\pi k}\left(\sum_{r=1}^{n-1}\sum_{\substack{
i_{1}<i_{2}<i_{3}<\ldots<i_{r}\\
\{i_{1},i_{2},i_{3},\ldots,i_{r}\}\\
\subset\{a_{1},a_{2},a_{3},\ldots,\widehat{a_{m}},\ldots,a_{n}\}
}}\sin\left(2\pi k\frac{\log(x)}{\log(a_{m})}-\frac{\pi r}{2}\right)\prod_{l=1}^{r}\cot\left(\frac{\pi k\log(i_{l})}{\log(a_{m})}\right)+\sin\left(2\pi k\frac{\log(x)}{\log(a_{m})}\right)\right)\\
&\hspace{6cm}\text{for all $k\in\mathbb{N}$ and for all $1\leq m\leq n$}.
\end{split}
\end{displaymath}
Summing up all the Residues, we get our formula for $N_{a_{1},a_{2},a_{3},\ldots,a_{n}}(x)$.
\end{proof}

\begin{remark}
The first few identities of the family of identities, which we encountered in the above proof, are
\begin{displaymath}
\begin{split}
S_{1}(x_{1})&=(x_{1}+1)-(x_{1}-1)=2\\
\\
S_{2}(x_{1})&=(x_{1}+1)+(x_{1}-1)=2x_{1}\\
\\
S_{1}(x_{1},x_{2})&=(x_{1}+1)(x_{2}+1)-(x_{1}-1)(x_{2}+1)-(x_{1}+1)(x_{2}-1)+(x_{1}-1)(x_{2}-1)=4\\
\\
S_{2}(x_{1},x_{2})&=(x_{1}+1)(x_{2}+1)+(x_{1}-1)(x_{2}+1)+(x_{1}+1)(x_{2}-1)+(x_{1}-1)(x_{2}-1)=4x_{1}x_{2}\\
\\
S_{1}(x_{1},x_{2},x_{3})&=(x_{1}+1)(x_{2}+1)(x_{3}+1)-(x_{1}-1)(x_{2}+1)(x_{3}+1)-(x_{1}+1)(x_{2}-1)(x_{3}+1)\\
&\quad-(x_{1}+1)(x_{2}+1)(x_{3}-1)+(x_{1}+1)(x_{2}-1)(x_{3}-1)+(x_{1}-1)(x_{2}+1)(x_{3}-1)\\
&\quad+(x_{1}-1)(x_{2}-1)(x_{3}+1)-(x_{1}-1)(x_{2}-1)(x_{3}-1)=8\\
\\
S_{2}(x_{1},x_{2},x_{3})&=(x_{1}+1)(x_{2}+1)(x_{3}+1)+(x_{1}-1)(x_{2}+1)(x_{3}+1)+(x_{1}+1)(x_{2}-1)(x_{3}+1)\\
&\quad+(x_{1}+1)(x_{2}+1)(x_{3}-1)+
(x_{1}+1)(x_{2}-1)(x_{3}-1)+(x_{1}-1)(x_{2}+1)(x_{3}-1)\\
&\quad+(x_{1}-1)(x_{2}-1)(x_{3}+1)+(x_{1}-1)(x_{2}-1)(x_{3}-1)=8x_{1}x_{2}x_{3}.
\end{split}
\end{displaymath}
And so on.
\end{remark}

\vspace{0.4cm}

\noindent Setting $a_{1}=2$, $a_{2}=3$, $a_{3}=5$, $a_{4}=7$, \ldots, $a_{k}=p_{k}=\text{$k$-th prime number}$, \ldots, $a_{n}=p_{n}=\text{$n$-th prime number}$ in the above theorem, we get for the sequence
\begin{displaymath}
\begin{split}
S_{2,3,5,7,\ldots,p_{n}}:&=\left\{2^{q_{1}}3^{q_{2}}5^{q_{3}}7^{q_{4}}\cdots p_{n}^{q_{n}}:q_{1}\in\mathbb{N}_{0},q_{2}\in\mathbb{N}_{0},q_{3}\in\mathbb{N}_{0},\ldots,q_{n}\in\mathbb{N}_{0}\right\},
\end{split}
\end{displaymath}
\noindent of $p_{n}$-smooth numbers \cite{1,2}, immediately the following

\begin{corollary}(Formula for the $p_{n}$-Smooth Numbers Counting Function $N_{2,3,5,7,\ldots,p_{n}}(x)$)\\
\noindent For every real number $x\geq1$, we have that
\begin{displaymath}
\begin{split}
N_{2,3,5,7,\ldots,p_{n}}(x)&=\Res_{s=0}\left(\frac{x^{s}}{s\prod_{k=1}^{n}\left(1-\frac{1}{p_{k}^{s}}\right)}\right)-\frac{1}{2^{n-1}}\sum_{k=1}^{n}B^{*}_{1}\left(\left\{\frac{\log(x)}{\log(p_{k})}\right\}\right)\\
&\quad+\frac{1}{2^{n-1}\pi}\sum_{m=1}^{n}\sum_{r=1}^{n-1}\sum_{\substack{
i_{1}<i_{2}<i_{3}<\ldots<i_{r}\\
\{i_{1},i_{2},i_{3},\ldots,i_{r}\}\\
\subset\{2,3,5,7,\ldots,\widehat{p_{m}},\ldots,p_{n}\}}}\sum_{k=1}^{\infty}\frac{\sin\left(2\pi k\frac{\log(x)}{\log(p_{m})}-\frac{\pi r}{2}\right)}{k}\prod_{l=1}^{r}\cot\left(\frac{\pi k\log(i_{l})}{\log(p_{m})}\right)\\
&\quad+\frac{1}{2}\chi_{S_{2,3,5,7,\ldots,p_{n}}}(x),
\end{split}
\end{displaymath}
\noindent where the series are to be interpreted as meaning\begin{displaymath}
\begin{split}
&\sum_{k=1}^{\infty}\frac{\sin\left(2\pi k\frac{\log(x)}{\log(p_{m})}-\frac{\pi r}{2}\right)}{k}\prod_{l=1}^{r}\cot\left(\frac{\pi k\log(i_{l})}{\log(p_{m})}\right)\\
=&\lim_{R\rightarrow\infty}\left(\sum_{k=1}^{\left\lfloor R\log(p_{m})\right\rfloor}\frac{\sin\left(2\pi k\frac{\log(x)}{\log(p_{m})}-\frac{\pi r}{2}\right)}{k}\prod_{l=1}^{r}\cot\left(\frac{\pi k\log(i_{l})}{\log(p_{m})}\right)\right),
\end{split}
\end{displaymath}
\noindent when $R\rightarrow\infty$ in an appropriate manner.
\end{corollary}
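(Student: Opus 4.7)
The plan is to obtain this corollary as an immediate specialization of the preceding Theorem by setting $a_{k}=p_{k}$ (the $k$-th prime) for each $k\in\{1,2,3,\ldots,n\}$. Since the theorem has already been proved via the Perron-formula / residue argument, no new analytic work is required; the task reduces to a verification of hypotheses and a substitution.

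First I would check the hypotheses of the theorem. The prime sequence satisfies $2<3<5<7<\ldots<p_{n}$, so the ordering requirement $a_{1}<a_{2}<a_{3}<\ldots<a_{n}$ is automatic. Moreover, distinct primes are pairwise coprime, so $\gcd(2,3,5,7,\ldots,p_{n})=1$, fulfilling the coprimality condition. Second, I would observe that the set $S_{2,3,5,7,\ldots,p_{n}}$ defined in this corollary is literally the set $S_{a_{1},a_{2},\ldots,a_{n}}$ of the theorem under the identification $a_{k}=p_{k}$, so $\chi_{S_{2,3,5,7,\ldots,p_{n}}}(x)=\chi_{S_{a_{1},a_{2},\ldots,a_{n}}}(x)$ coincide term-for-term after relabeling.

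Having verified this, I would substitute $a_{k}\mapsto p_{k}$ throughout the formula: the residue term at $s=0$ becomes $\Res_{s=0}\bigl(x^{s}/(s\prod_{k=1}^{n}(1-p_{k}^{-s}))\bigr)$, each fractional-part argument $\log(x)/\log(a_{k})$ becomes $\log(x)/\log(p_{k})$, and the hatted index $\widehat{a_{m}}$ becomes $\widehat{p_{m}}$ in the subset-selection sum. The conditional convergence convention carries over verbatim, with $a_{m}$ replaced by $p_{m}$ in the floor $\lfloor R\log(a_{m})\rfloor$, so the $R\to\infty$ interpretation requires no modification.

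No step here is expected to present a genuine obstacle: this is a pure renaming, and the corollary is what the theorem says when applied to the first $n$ primes. The only thing worth a second glance is to make sure the inner double sum over $(m,r,\{i_{1},\ldots,i_{r}\})$ is indexed correctly after the substitution, i.e., that the subsets $\{i_{1}<i_{2}<\ldots<i_{r}\}$ are drawn from $\{2,3,5,7,\ldots,\widehat{p_{m}},\ldots,p_{n}\}$ rather than from $\{p_{1},\ldots,\widehat{p_{m}},\ldots,p_{n}\}$ written with generic indices; since $p_{1}=2$, $p_{2}=3$, and so on, these two descriptions coincide, completing the proof.
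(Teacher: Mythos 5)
Your proposal matches the paper exactly: the paper derives this corollary as an immediate specialization of the preceding theorem by setting $a_{k}=p_{k}$, with no additional argument given. Your verification of the ordering and coprimality hypotheses is a small but correct addition; nothing more is needed.
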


\noindent This formula converges also very rapidly.\\

\noindent Therefore, we have

\begin{corollary}(The Hardy-Littlewood formula for $N_{a,b}(x)$ and $N_{2,3}(x)$)\cite{11,12}\\
\noindent For every real number $x\geq1$, we have that
\begin{displaymath}
\begin{split}
N_{a,b}(x)&=\frac{\log(x)^{2}}{2\log(a)\log(b)}+\frac{\log(x)}{2\log(a)}+\frac{\log(x)}{2\log(b)}+\frac{1}{4}+\frac{\log(a)}{12\log(b)}+\frac{\log(b)}{12\log(a)}\\
&\quad-\frac{1}{2}B^{*}_{1}\left(\left\{\frac{\log(x)}{\log(a)}\right\}\right)-\frac{1}{2}B^{*}_{1}\left(\left\{\frac{\log(x)}{\log(b)}\right\}\right)-\frac{1}{2\pi}\sum_{k=1}^{\infty}\frac{\cos\left(\frac{\pi k\log(b)}{\log(a)}\right)\cos\left(2\pi k\frac{\log(x)}{\log(a)}\right)}{k\sin\left(\frac{\pi k\log(b)}{\log(a)}\right)}\\
&\quad-\frac{1}{2\pi}\sum_{k=1}^{\infty}\frac{\cos\left(\frac{\pi k\log(a)}{\log(b)}\right)\cos\left(2\pi k\frac{\log(x)}{\log(b)}\right)}{k\sin\left(\frac{\pi k\log(a)}{\log(b)}\right)}+\frac{1}{2}\chi_{S_{a,b}}(x)
\end{split}
\end{displaymath}

\noindent and

\begin{displaymath}
\begin{split}
N_{2,3}(x)&=\frac{\log(x)^{2}}{2\log(2)\log(3)}+\frac{\log(x)}{2\log(2)}+\frac{\log(x)}{2\log(3)}+\frac{1}{4}+\frac{\log(2)}{12\log(3)}+\frac{\log(3)}{12\log(2)}\\
&\quad-\frac{1}{2}B^{*}_{1}\left(\left\{\frac{\log(x)}{\log(2)}\right\}\right)-\frac{1}{2}B^{*}_{1}\left(\left\{\frac{\log(x)}{\log(3)}\right\}\right)-\frac{1}{2\pi}\sum_{k=1}^{\infty}\frac{\cos\left(\frac{\pi k\log(3)}{\log(2)}\right)\cos\left(2\pi k\frac{\log(x)}{\log(2)}\right)}{k\sin\left(\frac{\pi k\log(3)}{\log(2)}\right)}\\
&\quad-\frac{1}{2\pi}\sum_{k=1}^{\infty}\frac{\cos\left(\frac{\pi k\log(2)}{\log(3)}\right)\cos\left(2\pi k\frac{\log(x)}{\log(3)}\right)}{k\sin\left(\frac{\pi k\log(2)}{\log(3)}\right)}
+\frac{1}{2}\chi_{S_{2,3}}(x).
\end{split}
\end{displaymath}
\end{corollary}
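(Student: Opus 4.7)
The plan is to specialize the general theorem for $N_{a_{1},a_{2},\ldots,a_{n}}(x)$ just proved to the case $n=2$, $a_{1}=a$, $a_{2}=b$, and then identify the three pieces of the stated identity: the polynomial part coming from $\Res_{s=0}$, the Bernoulli part, and the two cotangent-cosine series. The $N_{2,3}(x)$ formula will follow by substituting $a=2$, $b=3$.

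First I would compute the residue at $s=0$ of $\dfrac{x^{s}}{s(1-a^{-s})(1-b^{-s})}$. Writing
\begin{displaymath}
1-a^{-s}=s\log(a)\left(1-\frac{s\log(a)}{2}+\frac{s^{2}\log(a)^{2}}{6}+O(s^{3})\right)
\end{displaymath}
and the analogous expansion for $b$, the integrand has a triple pole at $s=0$ with leading term $\dfrac{1}{s^{3}\log(a)\log(b)}$. Multiplying $x^{s}=1+s\log(x)+\tfrac{s^{2}\log(x)^{2}}{2}+O(s^{3})$ by the reciprocals of the two $a$- and $b$-factors, each truncated to order $s^{2}$, and reading off the coefficient of $s^{2}$ gives
\begin{displaymath}
\Res_{s=0}\!\left(\frac{x^{s}}{s(1-a^{-s})(1-b^{-s})}\right)=\frac{\log(x)^{2}}{2\log(a)\log(b)}+\frac{\log(x)}{2\log(a)}+\frac{\log(x)}{2\log(b)}+\frac{1}{4}+\frac{\log(a)}{12\log(b)}+\frac{\log(b)}{12\log(a)},
\end{displaymath}
which is exactly the non-oscillatory part of the claimed corollary.

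Next I would reduce the triple sum in the general theorem to $n=2$. Only $r=1$ is allowed, and for each $m\in\{1,2\}$ the index set $\{i_{1}\}$ is forced to be $\{b\}$ when $m=1$ and $\{a\}$ when $m=2$. Using the elementary identity $\sin\!\left(\theta-\tfrac{\pi}{2}\right)=-\cos(\theta)$ and writing $\cot(\vartheta)=\cos(\vartheta)/\sin(\vartheta)$, the factor $\dfrac{1}{2^{n-1}\pi}=\dfrac{1}{2\pi}$ turns the reduced sum into precisely
\begin{displaymath}
-\frac{1}{2\pi}\sum_{k=1}^{\infty}\frac{\cos\!\left(\frac{\pi k\log(b)}{\log(a)}\right)\cos\!\left(2\pi k\frac{\log(x)}{\log(a)}\right)}{k\sin\!\left(\frac{\pi k\log(b)}{\log(a)}\right)}-\frac{1}{2\pi}\sum_{k=1}^{\infty}\frac{\cos\!\left(\frac{\pi k\log(a)}{\log(b)}\right)\cos\!\left(2\pi k\frac{\log(x)}{\log(b)}\right)}{k\sin\!\left(\frac{\pi k\log(a)}{\log(b)}\right)},
\end{displaymath}
with the Abel-style limit prescription inherited from the general theorem. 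The Bernoulli term $-\dfrac{1}{2^{n-1}}\sum_{k=1}^{2}B^{*}_{1}\!\left(\left\{\frac{\log(x)}{\log(a_{k})}\right\}\right)$ and the jump term $\tfrac{1}{2}\chi_{S_{a,b}}(x)$ match term-for-term, completing the first formula. Setting $a=2$, $b=3$ immediately produces the $N_{2,3}(x)$ version.

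The main obstacle is the book-keeping in the Laurent expansion around the triple pole: each factor must be expanded and inverted consistently up to order $s^{2}$, and the six polynomial monomials in the final residue have to be collected without error. The oscillatory-sum reduction, by contrast, is mechanical once one notices that for $n=2$ the only surviving exterior index is $r=1$ and the $\sin(\cdots-\pi/2)$ in the general formula is just a signed cosine.
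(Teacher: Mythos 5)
Your proposal is correct, but it takes a different route from the one printed in the paper. The paper proves this corollary from scratch: it reruns the Perron-formula argument for the Dirichlet series $\sum_k \chi_{S_{a,b}}(k)k^{-s}=\left(1-a^{-s}\right)^{-1}\left(1-b^{-s}\right)^{-1}$, computes the residues at $s=\pm\frac{2\pi ik}{\log(a)}$ and $s=\pm\frac{2\pi ik}{\log(b)}$ explicitly, and converts each conjugate pair into $\frac{1}{2\pi k}\left(-\cot\left(\frac{\pi k\log(b)}{\log(a)}\right)\cos\left(2\pi k\frac{\log(x)}{\log(a)}\right)+\sin\left(2\pi k\frac{\log(x)}{\log(a)}\right)\right)$ via the identity $\cot\left(\frac{\pi k\log(b)}{\log(a)}\right)=i\,\frac{b^{2\pi ik/\log(a)}+1}{b^{2\pi ik/\log(a)}-1}$; the pure $\sin$ contributions then resum to the $-\frac{1}{2}B^{*}_{1}$ terms. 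You instead specialize the already-proved general theorem for $N_{a_{1},\ldots,a_{n}}(x)$ to $n=2$, which collapses the triple sum to the single $r=1$ term for each of $m=1,2$, turns $\sin\left(\theta-\frac{\pi}{2}\right)\cot(\vartheta)$ into $-\cos(\theta)\cos(\vartheta)/\sin(\vartheta)$, and leaves only the evaluation of $\Res_{s=0}$ — which you carry out correctly, including the $\frac{1}{12}$ coefficients coming from inverting $1-a^{-s}=s\log(a)\left(1-\frac{s\log(a)}{2}+\frac{s^{2}\log(a)^{2}}{6}+O(s^{3})\right)$. Both arguments rest on the same Perron/residue machinery, but yours is more economical given that the general theorem is already established two pages earlier (and is arguably what the paper's own "Therefore, we have" transition suggests), whereas the paper's self-contained computation has the virtue of exhibiting the individual residues and the cotangent identity explicitly, which it then reuses as the template for the $N_{a,b,c}$ and $N_{a,b,c,d}$ cases. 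Your bookkeeping checks out: the coefficient of $s^{2}$ in $\left(1+\frac{s\log(a)}{2}+\frac{s^{2}\log(a)^{2}}{12}\right)\left(1+\frac{s\log(b)}{2}+\frac{s^{2}\log(b)^{2}}{12}\right)\left(1+s\log(x)+\frac{s^{2}\log(x)^{2}}{2}\right)$, divided by $\log(a)\log(b)$, reproduces all six polynomial monomials of the statement.
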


\begin{proof} The proof that we give here is Hardy's proof \cite{11} of the formula for $N_{a,b}(x)$.\\
We have that
\begin{displaymath}
\begin{split}
\sum_{k=1}^{\infty}\frac{\chi_{S_{a,b}}(k)}{k^{s}}&=\left(\sum_{m_{1}=0}^{\infty}\frac{1}{a^{m_{1}s}}\right)\left(\sum_{m_{2}=0}^{\infty}\frac{1}{b^{m_{2}s}}\right)\\
&=\frac{1}{\left(1-e^{-\log(a)s}\right)\left(1-e^{-\log(b)s}\right)}.
\end{split}
\end{displaymath}
Therefore, by Perron's formula, we get that
\begin{displaymath}
\begin{split}
N_{a,b}(x)&=\frac{1}{2\pi i}\int_{\gamma}\frac{x^{s}}{s\left(1-e^{-\log(a)s}\right)\left(1-e^{-\log(b)s}\right)}ds\\
&=\Res_{s=0}\left(\frac{x^{s}}{s\left(1-e^{-\log(a)s}\right)\left(1-e^{-\log(b)s}\right)}\right)+\sum_{k=1}^{\infty}\Res_{s=\frac{2\pi ik}{\log(a)}}\left(\frac{x^{s}}{s\left(1-e^{-\log(a)s}\right)\left(1-e^{-\log(b)s}\right)}\right)\\
&\quad+\sum_{k=1}^{\infty}\Res_{s=-\frac{2\pi ik}{\log(a)}}\left(\frac{x^{s}}{s\left(1-e^{-\log(a)s}\right)\left(1-e^{-\log(b)s}\right)}\right)\\
&\quad+\sum_{k=1}^{\infty}\Res_{s=\frac{2\pi ik}{\log(b)}}\left(\frac{x^{s}}{s\left(1-e^{-\log(a)s}\right)\left(1-e^{-\log(b)s}\right)}\right)\\
&\quad+\sum_{k=1}^{\infty}\Res_{s=-\frac{2\pi ik}{\log(b)}}\left(\frac{x^{s}}{s\left(1-e^{-\log(a)s}\right)\left(1-e^{-\log(b)s}\right)}\right)+\frac{1}{2}\chi_{S_{a,b}}(x),
\end{split}
\end{displaymath}
where $\gamma=\text{line from $1-i\infty$ to $1+i\infty$}$.\\
Moreover, we have that
\begin{displaymath}
\begin{split}
\Res_{s=0}\left(\frac{x^{s}}{s\left(1-e^{-\log(a)s}\right)\left(1-e^{-\log(b)s}\right)}\right)&=\frac{\log(x)^{2}}{2\log(a)\log(b)}+\frac{\log(x)}{2\log(a)}+\frac{\log(x)}{2\log(b)}\\
&\quad+\frac{1}{4}+\frac{\log(a)}{12\log(b)}+\frac{\log(b)}{12\log(a)}
\end{split}
\end{displaymath}
and that
\begin{displaymath}
\begin{split}
\Res_{s=\frac{2\pi ik}{\log(a)}}\left(\frac{x^{s}}{s\left(1-e^{-\log(a)s}\right)\left(1-e^{-\log(b)s}\right)}\right)&=-\frac{ib^{\frac{2\pi ik}{\log(a)}}x^{\frac{2\pi ik}{\log(a)}}}{2\pi k\left(b^{\frac{2\pi ik}{\log(a)}}-1\right)}\;\;\text{for all $k\in\mathbb{N}$}\\
\Res_{s=-\frac{2\pi ik}{\log(a)}}\left(\frac{x^{s}}{s\left(1-e^{-\log(a)s}\right)\left(1-e^{-\log(b)s}\right)}\right)&=-\frac{ix^{-\frac{2\pi ik}{\log(a)}}}{2\pi k\left(b^{\frac{2\pi ik}{\log(a)}}-1\right)}\;\;\text{for all $k\in\mathbb{N}$}
\end{split}
\end{displaymath}
\begin{displaymath}
\begin{split}
\Res_{s=\frac{2\pi ik}{\log(b)}}\left(\frac{x^{s}}{s\left(1-e^{-\log(a)s}\right)\left(1-e^{-\log(b)s}\right)}\right)&=-\frac{ia^{\frac{2\pi ik}{\log(b)}}x^{\frac{2\pi ik}{\log(b)}}}{2\pi k\left(a^{\frac{2\pi ik}{\log(b)}}-1\right)}\;\;\text{for all $k\in\mathbb{N}$}\\
\Res_{s=-\frac{2\pi ik}{\log(b)}}\left(\frac{x^{s}}{s\left(1-e^{-\log(a)s}\right)\left(1-e^{-\log(b)s}\right)}\right)&=-\frac{ix^{-\frac{2\pi ik}{\log(b)}}}{2\pi k\left(a^{\frac{2\pi ik}{\log(b)}}-1\right)}\;\;\text{for all $k\in\mathbb{N}$}.
\end{split}
\end{displaymath}
Using the relations
\begin{displaymath}
\begin{split}
\sin\left(2\pi k\frac{\log(x)}{\log(a)}\right)&=\frac{1}{2}ix^{-\frac{2\pi ik}{\log(a)}}-\frac{1}{2}ix^{\frac{2\pi ik}{\log(a)}}\\
\cos\left(2\pi k\frac{\log(x)}{\log(a)}\right)&=\frac{1}{2}x^{-\frac{2\pi ik}{\log(a)}}+\frac{1}{2}x^{\frac{2\pi ik}{\log(a)}}\\
\cot\left(\frac{\pi k\log(b)}{\log(a)}\right)&=i\frac{b^{\frac{2\pi ik}{\log(a)}}+1}{b^{\frac{2\pi ik}{\log(a)}}-1},
\end{split}
\end{displaymath}
we establish (by expanding everything out) the following identity
\begin{displaymath}
\begin{split}
&-\cot\left(\frac{\pi k\log(b)}{\log(a)}\right)\cos\left(2\pi k\frac{\log(x)}{\log(a)}\right)+\sin\left(2\pi k\frac{\log(x)}{\log(a)}\right)\\
&=-\left(i\frac{b^{\frac{2\pi ik}{\log(a)}}+1}{b^{\frac{2\pi ik}{\log(a)}}-1}\right)\left(\frac{1}{2}x^{-\frac{2\pi ik}{\log(a)}}+\frac{1}{2}x^{\frac{2\pi ik}{\log(a)}}\right)+\left(\frac{1}{2}ix^{-\frac{2\pi ik}{\log(a)}}-\frac{1}{2}ix^{\frac{2\pi ik}{\log(a)}}\right)\\
&=-\frac{i\left(x^{-\frac{2\pi ik}{\log(a)}}+b^{\frac{2\pi ik}{\log(a)}}x^{\frac{2\pi ik}{\log(a)}}\right)}{b^{\frac{2\pi ik}{\log(a)}}-1}.
\end{split}
\end{displaymath}
Therefore, for the first Residues (the "$a$ -Residues"), we have that
\begin{displaymath}
\begin{split}
&\Res_{s=\frac{2\pi ik}{\log(a)}}\left(\frac{x^{s}}{s\left(1-e^{-\log(a)s}\right)\left(1-e^{-\log(b)s}\right)}\right)+\Res_{s=-\frac{2\pi ik}{\log(a)}}\left(\frac{x^{s}}{s\left(1-e^{-\log(a)s}\right)\left(1-e^{-\log(b)s}\right)}\right)\\&=-\frac{i\left(x^{-\frac{2\pi ik}{\log(a)}}+b^{\frac{2\pi ik}{\log(a)}}x^{\frac{2\pi ik}{\log(a)}}\right)}{2\pi k\left(b^{\frac{2\pi ik}{\log(a)}}-1\right)}\\
&=\frac{1}{2\pi k}\left(-\cot\left(\frac{\pi k\log(b)}{\log(a)}\right)\cos\left(2\pi k\frac{\log(x)}{\log(a)}\right)+\sin\left(2\pi k\frac{\log(x)}{\log(a)}\right)\right)\;\;\text{for all $k\in\mathbb{N}$}.
\end{split}
\end{displaymath}
Exchanging $a$ and $b$ ("permuting $a$ and $b$"), we get also the other Residues (the "$b$ -Residues"), namely
\begin{displaymath}
\begin{split}
&\Res_{s=\frac{2\pi ik}{\log(b)}}\left(\frac{x^{s}}{s\left(1-e^{-\log(a)s}\right)\left(1-e^{-\log(b)s}\right)}\right)+\Res_{s=-\frac{2\pi ik}{\log(b)}}\left(\frac{x^{s}}{s\left(1-e^{-\log(a)s}\right)\left(1-e^{-\log(b)s}\right)}\right)\\&=-\frac{i\left(x^{-\frac{2\pi ik}{\log(b)}}+a^{\frac{2\pi ik}{\log(b)}}x^{\frac{2\pi ik}{\log(b)}}\right)}{2\pi k\left(a^{\frac{2\pi ik}{\log(b)}}-1\right)}\\
&=\frac{1}{2\pi k}\left(-\cot\left(\frac{\pi k\log(a)}{\log(b)}\right)\cos\left(2\pi k\frac{\log(x)}{\log(b)}\right)+\sin\left(2\pi k\frac{\log(x)}{\log(b)}\right)\right)\;\;\text{for all $k\in\mathbb{N}$}.
\end{split}
\end{displaymath}
Summing everything up, we get our formula for $N_{a,b}(x)$. Setting $a=2$ and $b=3$, we get also the formula for $N_{2,3}(x)$.
\end{proof}

\begin{corollary}(The Formulas for $N_{a,b,c}(x)$ and $N_{2,3,5}(x)$)\\
\noindent For every real number $x\geq1$, we have that
\begin{displaymath}
\begin{split}
N_{a,b,c}(x)&=\frac{\log(x)^{3}}{6\log(a)\log(b)\log(c)}+\frac{\log(x)^{2}}{4\log(a)\log(b)}+\frac{\log(x)^{2}}{4\log(a)\log(c)}+\frac{\log(x)^{2}}{4\log(b)\log(c)}+\frac{\log(x)}{4\log(a)}\\
&\quad+\frac{\log(x)}{4\log(b)}+\frac{\log(x)}{4\log(c)}+\frac{\log(a)\log(x)}{12\log(b)\log(c)}+\frac{\log(b)\log(x)}{12\log(a)\log(c)}+\frac{\log(c)\log(x)}{12\log(a)\log(b)}\\
&\quad+\frac{\log(a)}{24\log(b)}+\frac{\log(a)}{24\log(c)}+\frac{\log(b)}{24\log(a)}+\frac{\log(b)}{24\log(c)}+\frac{\log(c)}{24\log(a)}+\frac{\log(c)}{24\log(b)}+\frac{1}{8}\\
&\quad-\frac{1}{4}B^{*}_{1}\left(\left\{\frac{\log(x)}{\log(a)}\right\}\right)-\frac{1}{4}B^{*}_{1}\left(\left\{\frac{\log(x)}{\log(b)}\right\}\right)-\frac{1}{4}B^{*}_{1}\left(\left\{\frac{\log(x)}{\log(c)}\right\}\right)\\
&\quad-\frac{1}{4\pi}\sum_{k=1}^{\infty}\frac{\cos\left(\frac{\pi k\log(b)}{\log(a)}\right)\cos\left(2\pi k\frac{\log(x)}{\log(a)}\right)}{k\sin\left(\frac{\pi k\log(b)}{\log(a)}\right)}-\frac{1}{4\pi}\sum_{k=1}^{\infty}\frac{\cos\left(\frac{\pi k\log(a)}{\log(b)}\right)\cos\left(2\pi k\frac{\log(x)}{\log(b)}\right)}{k\sin\left(\frac{\pi k\log(a)}{\log(b)}\right)}\\
&\quad-\frac{1}{4\pi}\sum_{k=1}^{\infty}\frac{\cos\left(\frac{\pi k\log(c)}{\log(b)}\right)\cos\left(2\pi k\frac{\log(x)}{\log(b)}\right)}{k\sin\left(\frac{\pi k\log(c)}{\log(b)}\right)}-\frac{1}{4\pi}\sum_{k=1}^{\infty}\frac{\cos\left(\frac{\pi k\log(b)}{\log(c)}\right)\cos\left(2\pi k\frac{\log(x)}{\log(c)}\right)}{k\sin\left(\frac{\pi k\log(b)}{\log(c)}\right)}\\
&\quad-\frac{1}{4\pi}\sum_{k=1}^{\infty}\frac{\cos\left(\frac{\pi k\log(a)}{\log(c)}\right)\cos\left(2\pi k\frac{\log(x)}{\log(c)}\right)}{k\sin\left(\frac{\pi k\log(a)}{\log(c)}\right)}-\frac{1}{4\pi}\sum_{k=1}^{\infty}\frac{\cos\left(\frac{\pi k\log(c)}{\log(a)}\right)\cos\left(2\pi k\frac{\log(x)}{\log(a)}\right)}{k\sin\left(\frac{\pi k\log(c)}{\log(a)}\right)}\\
&\quad-\frac{1}{4\pi}\sum_{k=1}^{\infty}\frac{\cos\left(\frac{\pi k\log(b)}{\log(a)}\right)\cos\left(\frac{\pi k\log(c)}{\log(a)}\right)\sin\left(2\pi k\frac{\log(x)}{\log(a)}\right)}{k\sin\left(\frac{\pi k\log(b)}{\log(a)}\right)\sin\left(\frac{\pi k\log(c)}{\log(a)}\right)}\\
&\quad-\frac{1}{4\pi}\sum_{k=1}^{\infty}\frac{\cos\left(\frac{\pi k\log(a)}{\log(b)}\right)\cos\left(\frac{\pi k\log(c)}{\log(b)}\right)\sin\left(2\pi k\frac{\log(x)}{\log(b)}\right)}{k\sin\left(\frac{\pi k\log(a)}{\log(b)}\right)\sin\left(\frac{\pi k\log(c)}{\log(b)}\right)}\\
\end{split}
\end{displaymath}
\begin{displaymath}
\begin{split}
\hspace{-1.35cm}&\quad-\frac{1}{4\pi}\sum_{k=1}^{\infty}\frac{\cos\left(\frac{\pi k\log(a)}{\log(c)}\right)\cos\left(\frac{\pi k\log(b)}{\log(c)}\right)\sin\left(2\pi k\frac{\log(x)}{\log(c)}\right)}{k\sin\left(\frac{\pi k\log(a)}{\log(c)}\right)\sin\left(\frac{\pi k\log(b)}{\log(c)}\right)}+\frac{1}{2}\chi_{S_{a,b,c}}(x)
\end{split}
\end{displaymath}

\noindent and

\begin{displaymath}
\begin{split}
N_{2,3,5}(x)&=\frac{\log(x)^{3}}{6\log(2)\log(3)\log(5)}+\frac{\log(x)^{2}}{4\log(2)\log(3)}+\frac{\log(x)^{2}}{4\log(2)\log(5)}+\frac{\log(x)^{2}}{4\log(3)\log(5)}+\frac{\log(x)}{4\log(2)}\\
&\quad+\frac{\log(x)}{4\log(3)}+\frac{\log(x)}{4\log(5)}+\frac{\log(2)\log(x)}{12\log(3)\log(5)}+\frac{\log(3)\log(x)}{12\log(2)\log(5)}+\frac{\log(5)\log(x)}{12\log(2)\log(3)}\\
&\quad+\frac{\log(2)}{24\log(3)}+\frac{\log(2)}{24\log(5)}+\frac{\log(3)}{24\log(2)}+\frac{\log(3)}{24\log(5)}+\frac{\log(5)}{24\log(2)}+\frac{\log(5)}{24\log(3)}+\frac{1}{8}\\
&\quad-\frac{1}{4}B^{*}_{1}\left(\left\{\frac{\log(x)}{\log(2)}\right\}\right)-\frac{1}{4}B^{*}_{1}\left(\left\{\frac{\log(x)}{\log(3)}\right\}\right)-\frac{1}{4}B^{*}_{1}\left(\left\{\frac{\log(x)}{\log(5)}\right\}\right)\\
&\quad-\frac{1}{4\pi}\sum_{k=1}^{\infty}\frac{\cos\left(\frac{\pi k\log(3)}{\log(2)}\right)\cos\left(2\pi k\frac{\log(x)}{\log(2)}\right)}{k\sin\left(\frac{\pi k\log(3)}{\log(2)}\right)}-\frac{1}{4\pi}\sum_{k=1}^{\infty}\frac{\cos\left(\frac{\pi k\log(2)}{\log(3)}\right)\cos\left(2\pi k\frac{\log(x)}{\log(3)}\right)}{k\sin\left(\frac{\pi k\log(2)}{\log(3)}\right)}\\
&\quad-\frac{1}{4\pi}\sum_{k=1}^{\infty}\frac{\cos\left(\frac{\pi k\log(5)}{\log(3)}\right)\cos\left(2\pi k\frac{\log(x)}{\log(3)}\right)}{k\sin\left(\frac{\pi k\log(5)}{\log(3)}\right)}-\frac{1}{4\pi}\sum_{k=1}^{\infty}\frac{\cos\left(\frac{\pi k\log(3)}{\log(5)}\right)\cos\left(2\pi k\frac{\log(x)}{\log(5)}\right)}{k\sin\left(\frac{\pi k\log(3)}{\log(5)}\right)}\\
&\quad-\frac{1}{4\pi}\sum_{k=1}^{\infty}\frac{\cos\left(\frac{\pi k\log(2)}{\log(5)}\right)\cos\left(2\pi k\frac{\log(x)}{\log(5)}\right)}{k\sin\left(\frac{\pi k\log(2)}{\log(5)}\right)}-\frac{1}{4\pi}\sum_{k=1}^{\infty}\frac{\cos\left(\frac{\pi k\log(5)}{\log(2)}\right)\cos\left(2\pi k\frac{\log(x)}{\log(2)}\right)}{k\sin\left(\frac{\pi k\log(5)}{\log(2)}\right)}\\
&\quad-\frac{1}{4\pi}\sum_{k=1}^{\infty}\frac{\cos\left(\frac{\pi k\log(3)}{\log(2)}\right)\cos\left(\frac{\pi k\log(5)}{\log(2)}\right)\sin\left(2\pi k\frac{\log(x)}{\log(2)}\right)}{k\sin\left(\frac{\pi k\log(3)}{\log(2)}\right)\sin\left(\frac{\pi k\log(5)}{\log(2)}\right)}\\
&\quad-\frac{1}{4\pi}\sum_{k=1}^{\infty}\frac{\cos\left(\frac{\pi k\log(2)}{\log(3)}\right)\cos\left(\frac{\pi k\log(5)}{\log(3)}\right)\sin\left(2\pi k\frac{\log(x)}{\log(3)}\right)}{k\sin\left(\frac{\pi k\log(2)}{\log(3)}\right)\sin\left(\frac{\pi k\log(5)}{\log(3)}\right)}\\
&\quad-\frac{1}{4\pi}\sum_{k=1}^{\infty}\frac{\cos\left(\frac{\pi k\log(2)}{\log(5)}\right)\cos\left(\frac{\pi k\log(3)}{\log(5)}\right)\sin\left(2\pi k\frac{\log(x)}{\log(5)}\right)}{k\sin\left(\frac{\pi k\log(2)}{\log(5)}\right)\sin\left(\frac{\pi k\log(3)}{\log(5)}\right)}+\frac{1}{2}\chi_{S_{2,3,5}}(x).
\end{split}
\end{displaymath}
\end{corollary}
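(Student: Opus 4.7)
The plan is to specialize the general Theorem for $N_{a_1,a_2,a_3,\ldots,a_n}(x)$ to the case $n=3$, $(a_1,a_2,a_3)=(a,b,c)$, and then expand the two ingredients that appear on the right-hand side (the residue at $s=0$ and the combinatorial double sum over subsets) into the explicit symmetric form claimed in the corollary. The formula for $N_{2,3,5}(x)$ follows immediately by setting $a=2$, $b=3$, $c=5$.

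The first step is to unpack the residue
\[
\Res_{s=0}\left(\frac{x^{s}}{s(1-e^{-\alpha s})(1-e^{-\beta s})(1-e^{-\gamma s})}\right),\qquad \alpha=\log(a),\ \beta=\log(b),\ \gamma=\log(c),
\]
which has a pole of order $4$ at the origin. Using the Bernoulli-type expansion $\frac{1}{1-e^{-t}}=\frac{1}{t}+\frac{1}{2}+\frac{t}{12}+O(t^{3})$ for each of the three factors, extracting the $s^{-1}$ coefficient of the Laurent series reduces to extracting the $s^{3}$ coefficient of $x^{s}\prod_{k}(1+a_{k}s/2+a_{k}^{2}s^{2}/12)$ and dividing by $\alpha\beta\gamma$. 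Collecting the symmetric contributions in $(\alpha,\beta,\gamma)$ and using the factorization $(\alpha+\beta+\gamma)(\alpha\beta+\beta\gamma+\gamma\alpha)=3\alpha\beta\gamma+\sum_{i\neq j}\alpha_{i}^{2}\alpha_{j}$ should reproduce exactly the polynomial part of the corollary, including the constant $1/8$ and the six cross-ratio terms $\log(a)/24\log(b)$, etc.

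The second step is to rewrite the triple sum over $m\in\{1,2,3\}$ and $r\in\{1,\ldots,n-1\}=\{1,2\}$ appearing in the general theorem. For $r=1$, each choice of $m$ produces two terms indexed by the single remaining base $i_{1}\in\{a,b,c\}\setminus\{a_{m}\}$; using $\sin(\theta-\pi/2)=-\cos\theta$ together with $\cot=\cos/\sin$ converts each summand into
\[
-\frac{\cos(\pi k\log(i_{1})/\log(a_{m}))\cos(2\pi k\log(x)/\log(a_{m}))}{k\sin(\pi k\log(i_{1})/\log(a_{m}))},
\]
accounting for the six single-cosine sums in the corollary. For $r=2$, the only subset is the pair of remaining bases, and $\sin(\theta-\pi)=-\sin\theta$ turns the summand into the three triple-product sums of the claimed shape. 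The prefactor $1/(2^{n-1}\pi)=1/(4\pi)$ and the coefficient $1/2^{n-1}=1/4$ of the $B^{*}_{1}$ terms both match automatically.

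The main obstacle is the bookkeeping in the residue computation: one must track coefficients up to $s^{3}$ in a product of three Bernoulli-type series, identify the $L^{3}$, $L^{2}$, $L^{1}$ and $L^{0}$ coefficients (with $L=\log(x)$), and verify that each of them, after division by $\alpha\beta\gamma$, matches the symmetric patterns $\tfrac{1}{6\alpha\beta\gamma}$, $\tfrac{1}{4}\sum_{i<j}\tfrac{1}{\alpha_{i}\alpha_{j}}$, $\tfrac{1}{4}\sum_{i}\tfrac{1}{\alpha_{i}}+\tfrac{1}{12}\sum_{i}\tfrac{\alpha_{i}^{2}}{\alpha_{1}\alpha_{2}\alpha_{3}}$, and $\tfrac{1}{8}+\tfrac{1}{24}\sum_{i\neq j}\tfrac{\alpha_{i}}{\alpha_{j}}$ listed in the corollary. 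The trigonometric half of the argument is, by contrast, a purely mechanical expansion of the sum that already appears in the general theorem.
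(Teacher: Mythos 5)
Your proposal is correct, but it reaches the corollary by a different route than the paper does. The paper proves this corollary from scratch: it writes the Dirichlet series $\sum_k \chi_{S_{a,b,c}}(k)k^{-s}$ as $\prod(1-e^{-\log(\cdot)s})^{-1}$, applies Perron's formula, lists the residues at $s=0$ and at $s=\pm 2\pi ik/\log(a)$, $\pm 2\pi ik/\log(b)$, $\pm 2\pi ik/\log(c)$ separately, and then uses the identities $\sin(2\pi k\log x/\log a)=\tfrac{i}{2}(x^{-2\pi ik/\log a}-x^{2\pi ik/\log a})$, $\cos(\cdot)=\tfrac12(x^{-\cdot}+x^{\cdot})$, $\cot(\pi k\log b/\log a)=i\frac{b^{2\pi ik/\log a}+1}{b^{2\pi ik/\log a}-1}$ to pair each residue at $+2\pi ik/\log(a)$ with its conjugate and recognize the cotangent combination; permuting $a,b,c$ gives the rest. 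You instead specialize the already-proved general theorem for $N_{a_1,\ldots,a_n}(x)$ to $n=3$, which correctly delivers the $B_1^{*}$ coefficient $1/2^{n-1}=1/4$, the prefactor $1/(4\pi)$, and the oscillatory sums once you apply $\sin(\theta-\pi/2)=-\cos\theta$ for $r=1$ (six single-cotangent sums) and $\sin(\theta-\pi)=-\sin\theta$ for $r=2$ (three double-cotangent sums); the only substantive work left is evaluating the order-$4$ residue at $s=0$ via $\frac{1}{1-e^{-t}}=\frac1t+\frac12+\frac{t}{12}+O(t^3)$, and your bookkeeping of the $L^3,L^2,L^1,L^0$ coefficients does reproduce the stated polynomial (e.g. the $s^3$-coefficient $\sum_{i\neq j}\alpha_i^2\alpha_j/24+\alpha\beta\gamma/8$ divided by $\alpha\beta\gamma$ gives $\tfrac18$ plus the six $\tfrac{\log a}{24\log b}$-type terms). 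What your approach buys is economy and the reuse of proven machinery; what the paper's direct proof buys is a self-contained derivation in which the pairing of conjugate residues into real trigonometric sums is made completely explicit for the three-base case, without relying on the heavier combinatorial identity ($S_1=2^n$, $S_2=2^n\prod x_k$) underlying the general theorem. One small point to keep in mind: the specialized series inherit the summation convention $\lim_{R\to\infty}\sum_{k\le\lfloor R\log(a_m)\rfloor}$ from the general theorem, so you should state that the nine sums in the corollary are to be interpreted in that sense, as the paper does elsewhere.
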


\begin{proof}
We have that
\begin{displaymath}
\begin{split}
\sum_{k=1}^{\infty}\frac{\chi_{S_{a,b,c}}(k)}{k^{s}}&=\left(\sum_{m_{1}=0}^{\infty}\frac{1}{a^{m_{1}s}}\right)\left(\sum_{m_{2}=0}^{\infty}\frac{1}{b^{m_{2}s}}\right)\left(\sum_{m_{3}=0}^{\infty}\frac{1}{c^{m_{3}s}}\right)\\
&=\frac{1}{\left(1-e^{-\log(a)s}\right)\left(1-e^{-\log(b)s}\right)\left(1-e^{-\log(c)s}\right)}.
\end{split}
\end{displaymath}
Therefore, by Perron's formula, we get that
\begin{displaymath}
\begin{split}
N_{a,b,c}(x)&=\frac{1}{2\pi i}\int_{\gamma}\frac{x^{s}}{s\left(1-e^{-\log(a)s}\right)\left(1-e^{-\log(b)s}\right)\left(1-e^{-\log(c)s}\right)}ds\\
&=\Res_{s=0}\left(\frac{x^{s}}{s\left(1-e^{-\log(a)s}\right)\left(1-e^{-\log(b)s}\right)\left(1-e^{-\log(c)s}\right)}\right)\\
&\quad+\sum_{k=1}^{\infty}\Res_{s=\frac{2\pi ik}{\log(a)}}\left(\frac{x^{s}}{s\left(1-e^{-\log(a)s}\right)\left(1-e^{-\log(b)s}\right)\left(1-e^{-\log(c)s}\right)}\right)\\
&\quad+\sum_{k=1}^{\infty}\Res_{s=-\frac{2\pi ik}{\log(a)}}\left(\frac{x^{s}}{s\left(1-e^{-\log(a)s}\right)\left(1-e^{-\log(b)s}\right)\left(1-e^{-\log(c)s}\right)}\right)\\
&\quad+\sum_{k=1}^{\infty}\Res_{s=\frac{2\pi ik}{\log(b)}}\left(\frac{x^{s}}{s\left(1-e^{-\log(a)s}\right)\left(1-e^{-\log(b)s}\right)\left(1-e^{-\log(c)s}\right)}\right)\\
&\quad+\sum_{k=1}^{\infty}\Res_{s=-\frac{2\pi ik}{\log(b)}}\left(\frac{x^{s}}{s\left(1-e^{-\log(a)s}\right)\left(1-e^{-\log(b)s}\right)\left(1-e^{-\log(c)s}\right)}\right)\\
&\quad+\sum_{k=1}^{\infty}\Res_{s=\frac{2\pi ik}{\log(c)}}\left(\frac{x^{s}}{s\left(1-e^{-\log(a)s}\right)\left(1-e^{-\log(b)s}\right)\left(1-e^{-\log(c)s}\right)}\right)\\
&\quad+\sum_{k=1}^{\infty}\Res_{s=-\frac{2\pi ik}{\log(c)}}\left(\frac{x^{s}}{s\left(1-e^{-\log(a)s}\right)\left(1-e^{-\log(b)s}\right)\left(1-e^{-\log(c)s}\right)}\right)+\frac{1}{2}\chi_{S_{a,b,c}}(x),
\end{split}
\end{displaymath}
where $\gamma=\text{line from $1-i\infty$ to $1+i\infty$}$.\\
Furthermore, we have that
\begin{displaymath}
\begin{split}
&\Res_{s=0}\left(\frac{x^{s}}{s\left(1-e^{-\log(a)s}\right)\left(1-e^{-\log(b)s}\right)\left(1-e^{-\log(c)s}\right)}\right)\\
&=\frac{\log(x)^{3}}{6\log(a)\log(b)\log(c)}+\frac{\log(x)^{2}}{4\log(a)\log(b)}+\frac{\log(x)^{2}}{4\log(a)\log(c)}+\frac{\log(x)^{2}}{4\log(b)\log(c)}+\frac{\log(x)}{4\log(a)}\\
&\quad+\frac{\log(x)}{4\log(b)}+\frac{\log(x)}{4\log(c)}+\frac{\log(a)\log(x)}{12\log(b)\log(c)}+\frac{\log(b)\log(x)}{12\log(a)\log(c)}+\frac{\log(c)\log(x)}{12\log(a)\log(b)}\\
&\quad+\frac{\log(a)}{24\log(b)}+\frac{\log(a)}{24\log(c)}+\frac{\log(b)}{24\log(a)}+\frac{\log(b)}{24\log(c)}+\frac{\log(c)}{24\log(a)}+\frac{\log(c)}{24\log(b)}+\frac{1}{8}
\end{split}
\end{displaymath}
and that
\begin{displaymath}
\begin{split}
\Res_{s=\frac{2\pi ik}{\log(a)}}\left(\frac{x^{s}}{s\left(1-e^{-\log(a)s}\right)\left(1-e^{-\log(b)s}\right)\left(1-e^{-\log(c)s}\right)}\right)&=-\frac{ib^{\frac{2\pi ik}{\log(a)}}c^{\frac{2\pi ik}{\log(a)}}x^{\frac{2\pi ik}{\log(a)}}}{2\pi k\left(b^{\frac{2\pi ik}{\log(a)}}-1\right)\left(c^{\frac{2\pi ik}{\log(a)}}-1\right)}\;\;\forall k\in\mathbb{N}\\
\Res_{s=-\frac{2\pi ik}{\log(a)}}\left(\frac{x^{s}}{s\left(1-e^{-\log(a)s}\right)\left(1-e^{-\log(b)s}\right)\left(1-e^{-\log(c)s}\right)}\right)&=\frac{ix^{-\frac{2\pi ik}{\log(a)}}}{2\pi k\left(b^{\frac{2\pi ik}{\log(a)}}-1\right)\left(c^{\frac{2\pi ik}{\log(a)}}-1\right)}\;\;\forall k\in\mathbb{N}.
\end{split}
\end{displaymath}
Exactly similar expressions hold also for the other Residues under exchanging $a$ with $b$, and $a$ with $c$ ("permuting  $a,b,c$").
Using again the relations
\begin{displaymath}
\begin{split}
\sin\left(2\pi k\frac{\log(x)}{\log(a)}\right)&=\frac{1}{2}ix^{-\frac{2\pi ik}{\log(a)}}-\frac{1}{2}ix^{\frac{2\pi ik}{\log(a)}}\\
\cos\left(2\pi k\frac{\log(x)}{\log(a)}\right)&=\frac{1}{2}x^{-\frac{2\pi ik}{\log(a)}}+\frac{1}{2}x^{\frac{2\pi ik}{\log(a)}}\\
\cot\left(\frac{\pi k\log(b)}{\log(a)}\right)&=i\frac{b^{\frac{2\pi ik}{\log(a)}}+1}{b^{\frac{2\pi ik}{\log(a)}}-1},
\end{split}
\end{displaymath}
we establish (by expanding everything out) the following identity
\begin{displaymath}
\begin{split}
&-\cot\left(\frac{\pi k\log(b)}{\log(a)}\right)\cot\left(\frac{\pi k\log(c)}{\log(a)}\right)\sin\left(2\pi k\frac{\log(x)}{\log(a)}\right)-\cot\left(\frac{\pi k\log(b)}{\log(a)}\right)\cos\left(2\pi k\frac{\log(x)}{\log(a)}\right)\\
&-\cot\left(\frac{\pi k\log(c)}{\log(a)}\right)\cos\left(2\pi k\frac{\log(x)}{\log(a)}\right)+\sin\left(2\pi k\frac{\log(x)}{\log(a)}\right)\\
&=-\left(i\frac{b^{\frac{2\pi ik}{\log(a)}}+1}{b^{\frac{2\pi ik}{\log(a)}}-1}\right)\left(i\frac{c^{\frac{2\pi ik}{\log(a)}}+1}{c^{\frac{2\pi ik}{\log(a)}}-1}\right)\left(\frac{1}{2}ix^{-\frac{2\pi ik}{\log(a)}}-\frac{1}{2}ix^{\frac{2\pi ik}{\log(a)}}\right)-\left(i\frac{b^{\frac{2\pi ik}{\log(a)}}+1}{b^{\frac{2\pi ik}{\log(a)}}-1}\right)\left(\frac{1}{2}x^{-\frac{2\pi ik}{\log(a)}}+\frac{1}{2}x^{\frac{2\pi ik}{\log(a)}}\right)\\
&-\left(i\frac{c^{\frac{2\pi ik}{\log(a)}}+1}{c^{\frac{2\pi ik}{\log(a)}}-1}\right)\left(\frac{1}{2}x^{-\frac{2\pi ik}{\log(a)}}+\frac{1}{2}x^{\frac{2\pi ik}{\log(a)}}\right)+\left(\frac{1}{2}ix^{-\frac{2\pi ik}{\log(a)}}-\frac{1}{2}ix^{\frac{2\pi ik}{\log(a)}}\right)\\
&=\frac{2i\left(x^{-\frac{2\pi ik}{\log(a)}}-b^{\frac{2\pi ik}{\log(a)}}c^{\frac{2\pi ik}{\log(a)}}x^{\frac{2\pi ik}{\log(a)}}\right)}{\left(b^{\frac{2\pi ik}{\log(a)}}-1\right)\left(c^{\frac{2\pi ik}{\log(a)}}-1\right)}.
\end{split}
\end{displaymath}
Therefore, for the first Residues (the "$a$ -Residues"), we have that
\begin{displaymath}
\begin{split}
&\Res_{s=\frac{2\pi ik}{\log(a)}}\left(\frac{x^{s}}{s\left(1-e^{-\log(a)s}\right)\left(1-e^{-\log(b)s}\right)\left(1-e^{-\log(c)s}\right)}\right)\\
&+\Res_{s=-\frac{2\pi ik}{\log(a)}}\left(\frac{x^{s}}{s\left(1-e^{-\log(a)s}\right)\left(1-e^{-\log(b)s}\right)\left(1-e^{-\log(c)s}\right)}\right)\\
&=\frac{i\left(x^{-\frac{2\pi ik}{\log(a)}}-b^{\frac{2\pi ik}{\log(a)}}c^{\frac{2\pi ik}{\log(a)}}x^{\frac{2\pi ik}{\log(a)}}\right)}{2\pi k\left(b^{\frac{2\pi ik}{\log(a)}}-1\right)\left(c^{\frac{2\pi ik}{\log(a)}}-1\right)}\\
&=\frac{1}{4\pi k}\bigg(-\cot\left(\frac{\pi k\log(b)}{\log(a)}\right)\cot\left(\frac{\pi k\log(c)}{\log(a)}\right)\sin\left(2\pi k\frac{\log(x)}{\log(a)}\right)-\cot\left(\frac{\pi k\log(b)}{\log(a)}\right)\cos\left(2\pi k\frac{\log(x)}{\log(a)}\right)\\
&\hspace{1.6cm}-\cot\left(\frac{\pi k\log(c)}{\log(a)}\right)\cos\left(2\pi k\frac{\log(x)}{\log(a)}\right)+\sin\left(2\pi k\frac{\log(x)}{\log(a)}\right)\bigg)\;\;\forall k\in\mathbb{N}.
\end{split}
\end{displaymath}
Exchanging $a$ with $b$, and $a$ with $c$ ("permuting $a,b,c$"), we get also the other Residues (the "$b$ -Residues" and the "$c$ -Residues"), which have exactly the same structure.
Summing everything up, we get the formula for $N_{a,b,c}(x)$. Setting $a=2$, $b=3$ and $c=5$, we get also the formula for $N_{2,3,5}(x)$.
\end{proof}

\begin{corollary}(The Formulas for $N_{a,b,c,d}(x)$ and $N_{2,3,5,7}(x)$)\\
\noindent For every real number $x\geq1$, we have that
\begin{displaymath}
\begin{split}
N_{a,b,c,d}(x)&=\frac{\log(x)^{4}}{24\log(a)\log(b)\log(c)\log(d)}+\frac{\log(x)^{3}}{12\log(a)\log(b)\log(c)}+\frac{\log(x)^{3}}{12\log(a)\log(b)\log(d)}\\
&\quad+\frac{\log(x)^{3}}{12\log(a)\log(c)\log(d)}+\frac{\log(x)^{3}}{12\log(b)\log(c)\log(d)}+\frac{\log(a)\log(x)^{2}}{24\log(b)\log(c)\log(d)}\\
&\quad+\frac{\log(b)\log(x)^{2}}{24\log(a)\log(c)\log(d)}+\frac{\log(c)\log(x)^{2}}{24\log(a)\log(b)\log(d)}+\frac{\log(d)\log(x)^{2}}{24\log(a)\log(b)\log(c)}\\
&\quad+\frac{\log(x)^{2}}{8\log(a)\log(b)}+\frac{\log(x)^{2}}{8\log(a)\log(c)}+\frac{\log(x)^{2}}{8\log(a)\log(d)}+\frac{\log(x)^{2}}{8\log(b)\log(c)}+\frac{\log(x)^{2}}{8\log(b)\log(d)}\\
&\quad+\frac{\log(x)^{2}}{8\log(c)\log(d)}+\frac{\log(x)}{8\log(a)}+\frac{\log(x)}{8\log(b)}+\frac{\log(x)}{8\log(c)}+\frac{\log(x)}{8\log(d)}+\frac{\log(a)\log(x)}{24\log(b)\log(c)}\\
&\quad+\frac{\log(a)\log(x)}{24\log(b)\log(d)}+\frac{\log(a)\log(x)}{24\log(c)\log(d)}+\frac{\log(b)\log(x)}{24\log(a)\log(c)}+\frac{\log(b)\log(x)}{24\log(a)\log(d)}+\frac{\log(b)\log(x)}{24\log(c)\log(d)}\\
&\quad+\frac{\log(c)\log(x)}{24\log(a)\log(b)}+\frac{\log(c)\log(x)}{24\log(a)\log(d)}+\frac{\log(c)\log(x)}{24\log(b)\log(d)}+\frac{\log(d)\log(x)}{24\log(a)\log(b)}+\frac{\log(d)\log(x)}{24\log(a)\log(c)}\\
&\quad+\frac{\log(d)\log(x)}{24\log(b)\log(c)}+\frac{1}{16}+\frac{\log(a)}{48\log(b)}+\frac{\log(a)}{48\log(c)}+\frac{\log(a)}{48\log(d)}+\frac{\log(b)}{48\log(a)}+\frac{\log(b)}{48\log(c)}\\
&\quad+\frac{\log(b)}{48\log(d)}+\frac{\log(c)}{48\log(a)}+\frac{\log(c)}{48\log(b)}+\frac{\log(c)}{48\log(d)}+\frac{\log(d)}{48\log(a)}+\frac{\log(d)}{48\log(b)}+\frac{\log(d)}{48\log(c)}\\
&\quad+\frac{\log(a)\log(b)}{144\log(c)\log(d)}+\frac{\log(a)\log(c)}{144\log(b)\log(d)}+\frac{\log(a)\log(d)}{144\log(b)\log(c)}+\frac{\log(b)\log(c)}{144\log(a)\log(d)}\\
&\quad+\frac{\log(b)\log(d)}{144\log(a)\log(c)}+\frac{\log(c)\log(d)}{144\log(a)\log(b)}-\frac{\log(a)^{3}}{720\log(b)\log(c)\log(d)}-\frac{\log(b)^{3}}{720\log(a)\log(c)\log(d)}\\
&\quad-\frac{\log(c)^{3}}{720\log(a)\log(b)\log(d)}-\frac{\log(d)^{3}}{720\log(a)\log(b)\log(c)}-\frac{1}{8}B^{*}_{1}\left(\left\{\frac{\log(x)}{\log(a)}\right\}\right)\\
&\quad-\frac{1}{8}B^{*}_{1}\left(\left\{\frac{\log(x)}{\log(b)}\right\}\right)-\frac{1}{8}B^{*}_{1}\left(\left\{\frac{\log(x)}{\log(c)}\right\}\right)-\frac{1}{8}B^{*}_{1}\left(\left\{\frac{\log(x)}{\log(d)}\right\}\right)\\
&\quad-\frac{1}{8\pi}\sum_{k=1}^{\infty}\frac{\cos\left(\frac{\pi k\log(b)}{\log(a)}\right)\cos\left(2\pi k\frac{\log(x)}{\log(a)}\right)}{k\sin\left(\frac{\pi k\log(b)}{\log(a)}\right)}-\frac{1}{8\pi}\sum_{k=1}^{\infty}\frac{\cos\left(\frac{\pi k\log(a)}{\log(b)}\right)\cos\left(2\pi k\frac{\log(x)}{\log(b)}\right)}{k\sin\left(\frac{\pi k\log(a)}{\log(b)}\right)}\\
&\quad-\frac{1}{8\pi}\sum_{k=1}^{\infty}\frac{\cos\left(\frac{\pi k\log(c)}{\log(a)}\right)\cos\left(2\pi k\frac{\log(x)}{\log(a)}\right)}{k\sin\left(\frac{\pi k\log(c)}{\log(a)}\right)}-\frac{1}{8\pi}\sum_{k=1}^{\infty}\frac{\cos\left(\frac{\pi k\log(a)}{\log(c)}\right)\cos\left(2\pi k\frac{\log(x)}{\log(c)}\right)}{k\sin\left(\frac{\pi k\log(a)}{\log(c)}\right)}\\
\end{split}
\end{displaymath}
\begin{displaymath}
\begin{split}
\hspace{1.38cm}&\quad-\frac{1}{8\pi}\sum_{k=1}^{\infty}\frac{\cos\left(\frac{\pi k\log(d)}{\log(a)}\right)\cos\left(2\pi k\frac{\log(x)}{\log(a)}\right)}{k\sin\left(\frac{\pi k\log(d)}{\log(a)}\right)}-\frac{1}{8\pi}\sum_{k=1}^{\infty}\frac{\cos\left(\frac{\pi k\log(a)}{\log(d)}\right)\cos\left(2\pi k\frac{\log(x)}{\log(d)}\right)}{k\sin\left(\frac{\pi k\log(a)}{\log(d)}\right)}\\
&\quad-\frac{1}{8\pi}\sum_{k=1}^{\infty}\frac{\cos\left(\frac{\pi k\log(c)}{\log(b)}\right)\cos\left(2\pi k\frac{\log(x)}{\log(b)}\right)}{k\sin\left(\frac{\pi k\log(c)}{\log(b)}\right)}-\frac{1}{8\pi}\sum_{k=1}^{\infty}\frac{\cos\left(\frac{\pi k\log(b)}{\log(c)}\right)\cos\left(2\pi k\frac{\log(x)}{\log(c)}\right)}{k\sin\left(\frac{\pi k\log(b)}{\log(c)}\right)}\\
&\quad-\frac{1}{8\pi}\sum_{k=1}^{\infty}\frac{\cos\left(\frac{\pi k\log(d)}{\log(b)}\right)\cos\left(2\pi k\frac{\log(x)}{\log(b)}\right)}{k\sin\left(\frac{\pi k\log(d)}{\log(b)}\right)}-\frac{1}{8\pi}\sum_{k=1}^{\infty}\frac{\cos\left(\frac{\pi k\log(b)}{\log(d)}\right)\cos\left(2\pi k\frac{\log(x)}{\log(d)}\right)}{k\sin\left(\frac{\pi k\log(b)}{\log(d)}\right)}\\
&\quad-\frac{1}{8\pi}\sum_{k=1}^{\infty}\frac{\cos\left(\frac{\pi k\log(d)}{\log(c)}\right)\cos\left(2\pi k\frac{\log(x)}{\log(c)}\right)}{k\sin\left(\frac{\pi k\log(d)}{\log(c)}\right)}-\frac{1}{8\pi}\sum_{k=1}^{\infty}\frac{\cos\left(\frac{\pi k\log(c)}{\log(d)}\right)\cos\left(2\pi k\frac{\log(x)}{\log(d)}\right)}{k\sin\left(\frac{\pi k\log(c)}{\log(d)}\right)}\\
&\quad-\frac{1}{8\pi}\sum_{k=1}^{\infty}\frac{\cos\left(\frac{\pi k\log(b)}{\log(a)}\right)\cos\left(\frac{\pi k\log(c)}{\log(a)}\right)\sin\left(2\pi k\frac{\log(x)}{\log(a)}\right)}{k\sin\left(\frac{\pi k\log(b)}{\log(a)}\right)\sin\left(\frac{\pi k\log(c)}{\log(a)}\right)}\\
&\quad-\frac{1}{8\pi}\sum_{k=1}^{\infty}\frac{\cos\left(\frac{\pi k\log(c)}{\log(a)}\right)\cos\left(\frac{\pi k\log(d)}{\log(a)}\right)\sin\left(2\pi k\frac{\log(x)}{\log(a)}\right)}{k\sin\left(\frac{\pi k\log(c)}{\log(a)}\right)\sin\left(\frac{\pi k\log(d)}{\log(a)}\right)}\\
&\quad-\frac{1}{8\pi}\sum_{k=1}^{\infty}\frac{\cos\left(\frac{\pi k\log(b)}{\log(a)}\right)\cos\left(\frac{\pi k\log(d)}{\log(a)}\right)\sin\left(2\pi k\frac{\log(x)}{\log(a)}\right)}{k\sin\left(\frac{\pi k\log(b)}{\log(a)}\right)\sin\left(\frac{\pi k\log(d)}{\log(a)}\right)}\\
&\quad-\frac{1}{8\pi}\sum_{k=1}^{\infty}\frac{\cos\left(\frac{\pi k\log(a)}{\log(b)}\right)\cos\left(\frac{\pi k\log(c)}{\log(b)}\right)\sin\left(2\pi k\frac{\log(x)}{\log(b)}\right)}{k\sin\left(\frac{\pi k\log(a)}{\log(b)}\right)\sin\left(\frac{\pi k\log(c)}{\log(b)}\right)}\\
&\quad-\frac{1}{8\pi}\sum_{k=1}^{\infty}\frac{\cos\left(\frac{\pi k\log(c)}{\log(b)}\right)\cos\left(\frac{\pi k\log(d)}{\log(b)}\right)\sin\left(2\pi k\frac{\log(x)}{\log(b)}\right)}{k\sin\left(\frac{\pi k\log(c)}{\log(b)}\right)\sin\left(\frac{\pi k\log(d)}{\log(b)}\right)}\\
&\quad-\frac{1}{8\pi}\sum_{k=1}^{\infty}\frac{\cos\left(\frac{\pi k\log(a)}{\log(b)}\right)\cos\left(\frac{\pi k\log(d)}{\log(b)}\right)\sin\left(2\pi k\frac{\log(x)}{\log(b)}\right)}{k\sin\left(\frac{\pi k\log(a)}{\log(b)}\right)\sin\left(\frac{\pi k\log(d)}{\log(b)}\right)}\\
&\quad-\frac{1}{8\pi}\sum_{k=1}^{\infty}\frac{\cos\left(\frac{\pi k\log(a)}{\log(c)}\right)\cos\left(\frac{\pi k\log(b)}{\log(c)}\right)\sin\left(2\pi k\frac{\log(x)}{\log(c)}\right)}{k\sin\left(\frac{\pi k\log(a)}{\log(c)}\right)\sin\left(\frac{\pi k\log(b)}{\log(c)}\right)}\\
&\quad-\frac{1}{8\pi}\sum_{k=1}^{\infty}\frac{\cos\left(\frac{\pi k\log(b)}{\log(c)}\right)\cos\left(\frac{\pi k\log(d)}{\log(c)}\right)\sin\left(2\pi k\frac{\log(x)}{\log(c)}\right)}{k\sin\left(\frac{\pi k\log(b)}{\log(c)}\right)\sin\left(\frac{\pi k\log(d)}{\log(c)}\right)}\\
\end{split}
\end{displaymath}
\begin{displaymath}
\begin{split}
\hspace{1.72cm}&\quad-\frac{1}{8\pi}\sum_{k=1}^{\infty}\frac{\cos\left(\frac{\pi k\log(a)}{\log(c)}\right)\cos\left(\frac{\pi k\log(d)}{\log(c)}\right)\sin\left(2\pi k\frac{\log(x)}{\log(c)}\right)}{k\sin\left(\frac{\pi k\log(a)}{\log(c)}\right)\sin\left(\frac{\pi k\log(d)}{\log(c)}\right)}\\
&\quad-\frac{1}{8\pi}\sum_{k=1}^{\infty}\frac{\cos\left(\frac{\pi k\log(a)}{\log(d)}\right)\cos\left(\frac{\pi k\log(b)}{\log(d)}\right)\sin\left(2\pi k\frac{\log(x)}{\log(d)}\right)}{k\sin\left(\frac{\pi k\log(a)}{\log(d)}\right)\sin\left(\frac{\pi k\log(b)}{\log(d)}\right)}\\
&\quad-\frac{1}{8\pi}\sum_{k=1}^{\infty}\frac{\cos\left(\frac{\pi k\log(b)}{\log(d)}\right)\cos\left(\frac{\pi k\log(c)}{\log(d)}\right)\sin\left(2\pi k\frac{\log(x)}{\log(d)}\right)}{k\sin\left(\frac{\pi k\log(b)}{\log(d)}\right)\sin\left(\frac{\pi k\log(c)}{\log(d)}\right)}\\
&\quad-\frac{1}{8\pi}\sum_{k=1}^{\infty}\frac{\cos\left(\frac{\pi k\log(a)}{\log(d)}\right)\cos\left(\frac{\pi k\log(c)}{\log(d)}\right)\sin\left(2\pi k\frac{\log(x)}{\log(d)}\right)}{k\sin\left(\frac{\pi k\log(a)}{\log(d)}\right)\sin\left(\frac{\pi k\log(c)}{\log(d)}\right)}\\
&\quad+\frac{1}{8\pi}\sum_{k=1}^{\infty}\frac{\cos\left(\frac{\pi k\log(b)}{\log(a)}\right)\cos\left(\frac{\pi k\log(c)}{\log(a)}\right)\cos\left(\frac{\pi k\log(d)}{\log(a)}\right)\cos\left(2\pi k\frac{\log(x)}{\log(a)}\right)}{k\sin\left(\frac{\pi k\log(b)}{\log(a)}\right)\sin\left(\frac{\pi k\log(c)}{\log(a)}\right)\sin\left(\frac{\pi k\log(d)}{\log(a)}\right)}\\
&\quad+\frac{1}{8\pi}\sum_{k=1}^{\infty}\frac{\cos\left(\frac{\pi k\log(a)}{\log(b)}\right)\cos\left(\frac{\pi k\log(c)}{\log(b)}\right)\cos\left(\frac{\pi k\log(d)}{\log(b)}\right)\cos\left(2\pi k\frac{\log(x)}{\log(b)}\right)}{k\sin\left(\frac{\pi k\log(a)}{\log(b)}\right)\sin\left(\frac{\pi k\log(c)}{\log(b)}\right)\sin\left(\frac{\pi k\log(d)}{\log(b)}\right)}\\
&\quad+\frac{1}{8\pi}\sum_{k=1}^{\infty}\frac{\cos\left(\frac{\pi k\log(a)}{\log(c)}\right)\cos\left(\frac{\pi k\log(b)}{\log(c)}\right)\cos\left(\frac{\pi k\log(d)}{\log(c)}\right)\cos\left(2\pi k\frac{\log(x)}{\log(c)}\right)}{k\sin\left(\frac{\pi k\log(a)}{\log(c)}\right)\sin\left(\frac{\pi k\log(b)}{\log(c)}\right)\sin\left(\frac{\pi k\log(d)}{\log(c)}\right)}\\
&\quad+\frac{1}{8\pi}\sum_{k=1}^{\infty}\frac{\cos\left(\frac{\pi k\log(a)}{\log(d)}\right)\cos\left(\frac{\pi k\log(b)}{\log(d)}\right)\cos\left(\frac{\pi k\log(c)}{\log(d)}\right)\cos\left(2\pi k\frac{\log(x)}{\log(d)}\right)}{k\sin\left(\frac{\pi k\log(a)}{\log(d)}\right)\sin\left(\frac{\pi k\log(b)}{\log(d)}\right)\sin\left(\frac{\pi k\log(c)}{\log(d)}\right)}+\frac{1}{2}\chi_{S_{a,b,c,d}}(x)
\end{split}
\end{displaymath}

\noindent and

\begin{displaymath}
\begin{split}
N_{2,3,5,7}(x)&=\frac{\log(x)^{4}}{24\log(2)\log(3)\log(5)\log(7)}+\frac{\log(x)^{3}}{12\log(2)\log(3)\log(5)}+\frac{\log(x)^{3}}{12\log(2)\log(3)\log(7)}\\
&\quad+\frac{\log(x)^{3}}{12\log(2)\log(5)\log(7)}+\frac{\log(x)^{3}}{12\log(3)\log(5)\log(7)}+\frac{\log(2)\log(x)^{2}}{24\log(3)\log(5)\log(7)}\\
&\quad+\frac{\log(3)\log(x)^{2}}{24\log(2)\log(5)\log(7)}+\frac{\log(5)\log(x)^{2}}{24\log(2)\log(3)\log(7)}+\frac{\log(7)\log(x)^{2}}{24\log(2)\log(3)\log(5)}\\
&\quad+\frac{\log(x)^{2}}{8\log(2)\log(3)}+\frac{\log(x)^{2}}{8\log(2)\log(5)}+\frac{\log(x)^{2}}{8\log(2)\log(7)}+\frac{\log(x)^{2}}{8\log(3)\log(5)}+\frac{\log(x)^{2}}{8\log(3)\log(7)}\\
&\quad+\frac{\log(x)^{2}}{8\log(5)\log(7)}+\frac{\log(x)}{8\log(2)}+\frac{\log(x)}{8\log(3)}+\frac{\log(x)}{8\log(5)}+\frac{\log(x)}{8\log(7)}+\frac{\log(2)\log(x)}{24\log(3)\log(5)}\\
\end{split}
\end{displaymath}
\begin{displaymath}
\begin{split}
\hspace{1.75cm}&\quad+\frac{\log(2)\log(x)}{24\log(3)\log(7)}+\frac{\log(2)\log(x)}{24\log(5)\log(7)}+\frac{\log(3)\log(x)}{24\log(2)\log(5)}+\frac{\log(3)\log(x)}{24\log(2)\log(7)}+\frac{\log(3)\log(x)}{24\log(5)\log(7)}\\
&\quad+\frac{\log(5)\log(x)}{24\log(2)\log(3)}+\frac{\log(5)\log(x)}{24\log(2)\log(7)}+\frac{\log(5)\log(x)}{24\log(3)\log(7)}+\frac{\log(7)\log(x)}{24\log(2)\log(3)}+\frac{\log(7)\log(x)}{24\log(2)\log(5)}\\
&\quad+\frac{\log(7)\log(x)}{24\log(3)\log(5)}+\frac{1}{16}+\frac{\log(2)}{48\log(3)}+\frac{\log(2)}{48\log(5)}+\frac{\log(2)}{48\log(7)}+\frac{\log(3)}{48\log(2)}+\frac{\log(3)}{48\log(5)}\\
&\quad+\frac{\log(3)}{48\log(7)}+\frac{\log(5)}{48\log(2)}+\frac{\log(5)}{48\log(3)}+\frac{\log(5)}{48\log(7)}+\frac{\log(7)}{48\log(2)}+\frac{\log(7)}{48\log(3)}+\frac{\log(7)}{48\log(5)}\\
&\quad+\frac{\log(2)\log(3)}{144\log(5)\log(7)}+\frac{\log(2)\log(5)}{144\log(3)\log(7)}+\frac{\log(2)\log(7)}{144\log(3)\log(5)}+\frac{\log(3)\log(5)}{144\log(2)\log(7)}\\
&\quad+\frac{\log(3)\log(7)}{144\log(2)\log(5)}+\frac{\log(5)\log(7)}{144\log(2)\log(3)}-\frac{\log(2)^{3}}{720\log(3)\log(5)\log(7)}-\frac{\log(3)^{3}}{720\log(2)\log(5)\log(7)}\\
&\quad-\frac{\log(5)^{3}}{720\log(2)\log(3)\log(7)}-\frac{\log(7)^{3}}{720\log(2)\log(3)\log(5)}-\frac{1}{8}B^{*}_{1}\left(\left\{\frac{\log(x)}{\log(2)}\right\}\right)\\
&\quad-\frac{1}{8}B^{*}_{1}\left(\left\{\frac{\log(x)}{\log(3)}\right\}\right)-\frac{1}{8}B^{*}_{1}\left(\left\{\frac{\log(x)}{\log(5)}\right\}\right)-\frac{1}{8}B^{*}_{1}\left(\left\{\frac{\log(x)}{\log(7)}\right\}\right)\\
&\quad-\frac{1}{8\pi}\sum_{k=1}^{\infty}\frac{\cos\left(\frac{\pi k\log(3)}{\log(2)}\right)\cos\left(2\pi k\frac{\log(x)}{\log(2)}\right)}{k\sin\left(\frac{\pi k\log(3)}{\log(2)}\right)}-\frac{1}{8\pi}\sum_{k=1}^{\infty}\frac{\cos\left(\frac{\pi k\log(2)}{\log(3)}\right)\cos\left(2\pi k\frac{\log(x)}{\log(3)}\right)}{k\sin\left(\frac{\pi k\log(2)}{\log(3)}\right)}\\
&\quad-\frac{1}{8\pi}\sum_{k=1}^{\infty}\frac{\cos\left(\frac{\pi k\log(5)}{\log(2)}\right)\cos\left(2\pi k\frac{\log(x)}{\log(2)}\right)}{k\sin\left(\frac{\pi k\log(5)}{\log(2)}\right)}-\frac{1}{8\pi}\sum_{k=1}^{\infty}\frac{\cos\left(\frac{\pi k\log(2)}{\log(5)}\right)\cos\left(2\pi k\frac{\log(x)}{\log(5)}\right)}{k\sin\left(\frac{\pi k\log(2)}{\log(5)}\right)}\\
&\quad-\frac{1}{8\pi}\sum_{k=1}^{\infty}\frac{\cos\left(\frac{\pi k\log(7)}{\log(2)}\right)\cos\left(2\pi k\frac{\log(x)}{\log(2)}\right)}{k\sin\left(\frac{\pi k\log(7)}{\log(2)}\right)}-\frac{1}{8\pi}\sum_{k=1}^{\infty}\frac{\cos\left(\frac{\pi k\log(2)}{\log(7)}\right)\cos\left(2\pi k\frac{\log(x)}{\log(7)}\right)}{k\sin\left(\frac{\pi k\log(2)}{\log(7)}\right)}\\
&\quad-\frac{1}{8\pi}\sum_{k=1}^{\infty}\frac{\cos\left(\frac{\pi k\log(5)}{\log(3)}\right)\cos\left(2\pi k\frac{\log(x)}{\log(3)}\right)}{k\sin\left(\frac{\pi k\log(5)}{\log(3)}\right)}-\frac{1}{8\pi}\sum_{k=1}^{\infty}\frac{\cos\left(\frac{\pi k\log(3)}{\log(5)}\right)\cos\left(2\pi k\frac{\log(x)}{\log(5)}\right)}{k\sin\left(\frac{\pi k\log(3)}{\log(5)}\right)}\\
&\quad-\frac{1}{8\pi}\sum_{k=1}^{\infty}\frac{\cos\left(\frac{\pi k\log(7)}{\log(3)}\right)\cos\left(2\pi k\frac{\log(x)}{\log(3)}\right)}{k\sin\left(\frac{\pi k\log(7)}{\log(3)}\right)}-\frac{1}{8\pi}\sum_{k=1}^{\infty}\frac{\cos\left(\frac{\pi k\log(3)}{\log(7)}\right)\cos\left(2\pi k\frac{\log(x)}{\log(7)}\right)}{k\sin\left(\frac{\pi k\log(3)}{\log(7)}\right)}\\
&\quad-\frac{1}{8\pi}\sum_{k=1}^{\infty}\frac{\cos\left(\frac{\pi k\log(7)}{\log(5)}\right)\cos\left(2\pi k\frac{\log(x)}{\log(5)}\right)}{k\sin\left(\frac{\pi k\log(7)}{\log(5)}\right)}-\frac{1}{8\pi}\sum_{k=1}^{\infty}\frac{\cos\left(\frac{\pi k\log(5)}{\log(7)}\right)\cos\left(2\pi k\frac{\log(x)}{\log(7)}\right)}{k\sin\left(\frac{\pi k\log(5)}{\log(7)}\right)}\\
&\quad-\frac{1}{8\pi}\sum_{k=1}^{\infty}\frac{\cos\left(\frac{\pi k\log(3)}{\log(2)}\right)\cos\left(\frac{\pi k\log(5)}{\log(2)}\right)\sin\left(2\pi k\frac{\log(x)}{\log(2)}\right)}{k\sin\left(\frac{\pi k\log(3)}{\log(2)}\right)\sin\left(\frac{\pi k\log(5)}{\log(2)}\right)}\\
\end{split}
\end{displaymath}
\begin{displaymath}
\begin{split}
&\quad-\frac{1}{8\pi}\sum_{k=1}^{\infty}\frac{\cos\left(\frac{\pi k\log(5)}{\log(2)}\right)\cos\left(\frac{\pi k\log(7)}{\log(2)}\right)\sin\left(2\pi k\frac{\log(x)}{\log(2)}\right)}{k\sin\left(\frac{\pi k\log(5)}{\log(2)}\right)\sin\left(\frac{\pi k\log(7)}{\log(2)}\right)}\\
&\quad-\frac{1}{8\pi}\sum_{k=1}^{\infty}\frac{\cos\left(\frac{\pi k\log(3)}{\log(2)}\right)\cos\left(\frac{\pi k\log(7)}{\log(2)}\right)\sin\left(2\pi k\frac{\log(x)}{\log(2)}\right)}{k\sin\left(\frac{\pi k\log(3)}{\log(2)}\right)\sin\left(\frac{\pi k\log(7)}{\log(2)}\right)}\hspace{3.18cm}\\
&\quad-\frac{1}{8\pi}\sum_{k=1}^{\infty}\frac{\cos\left(\frac{\pi k\log(2)}{\log(3)}\right)\cos\left(\frac{\pi k\log(5)}{\log(3)}\right)\sin\left(2\pi k\frac{\log(x)}{\log(3)}\right)}{k\sin\left(\frac{\pi k\log(2)}{\log(3)}\right)\sin\left(\frac{\pi k\log(5)}{\log(3)}\right)}\\
&\quad-\frac{1}{8\pi}\sum_{k=1}^{\infty}\frac{\cos\left(\frac{\pi k\log(5)}{\log(3)}\right)\cos\left(\frac{\pi k\log(7)}{\log(3)}\right)\sin\left(2\pi k\frac{\log(x)}{\log(3)}\right)}{k\sin\left(\frac{\pi k\log(5)}{\log(3)}\right)\sin\left(\frac{\pi k\log(7)}{\log(3)}\right)}\\
&\quad-\frac{1}{8\pi}\sum_{k=1}^{\infty}\frac{\cos\left(\frac{\pi k\log(2)}{\log(3)}\right)\cos\left(\frac{\pi k\log(7)}{\log(3)}\right)\sin\left(2\pi k\frac{\log(x)}{\log(3)}\right)}{k\sin\left(\frac{\pi k\log(2)}{\log(3)}\right)\sin\left(\frac{\pi k\log(7)}{\log(3)}\right)}\\
&\quad-\frac{1}{8\pi}\sum_{k=1}^{\infty}\frac{\cos\left(\frac{\pi k\log(2)}{\log(5)}\right)\cos\left(\frac{\pi k\log(3)}{\log(5)}\right)\sin\left(2\pi k\frac{\log(x)}{\log(5)}\right)}{k\sin\left(\frac{\pi k\log(2)}{\log(5)}\right)\sin\left(\frac{\pi k\log(3)}{\log(5)}\right)}\\
&\quad-\frac{1}{8\pi}\sum_{k=1}^{\infty}\frac{\cos\left(\frac{\pi k\log(3)}{\log(5)}\right)\cos\left(\frac{\pi k\log(7)}{\log(5)}\right)\sin\left(2\pi k\frac{\log(x)}{\log(5)}\right)}{k\sin\left(\frac{\pi k\log(3)}{\log(5)}\right)\sin\left(\frac{\pi k\log(7)}{\log(5)}\right)}\\
&\quad-\frac{1}{8\pi}\sum_{k=1}^{\infty}\frac{\cos\left(\frac{\pi k\log(2)}{\log(5)}\right)\cos\left(\frac{\pi k\log(7)}{\log(5)}\right)\sin\left(2\pi k\frac{\log(x)}{\log(5)}\right)}{k\sin\left(\frac{\pi k\log(2)}{\log(5)}\right)\sin\left(\frac{\pi k\log(7)}{\log(5)}\right)}\\
&\quad-\frac{1}{8\pi}\sum_{k=1}^{\infty}\frac{\cos\left(\frac{\pi k\log(2)}{\log(7)}\right)\cos\left(\frac{\pi k\log(3)}{\log(7)}\right)\sin\left(2\pi k\frac{\log(x)}{\log(7)}\right)}{k\sin\left(\frac{\pi k\log(2)}{\log(7)}\right)\sin\left(\frac{\pi k\log(3)}{\log(7)}\right)}\\
&\quad-\frac{1}{8\pi}\sum_{k=1}^{\infty}\frac{\cos\left(\frac{\pi k\log(3)}{\log(7)}\right)\cos\left(\frac{\pi k\log(5)}{\log(7)}\right)\sin\left(2\pi k\frac{\log(x)}{\log(7)}\right)}{k\sin\left(\frac{\pi k\log(3)}{\log(7)}\right)\sin\left(\frac{\pi k\log(5)}{\log(7)}\right)}\\
&\quad-\frac{1}{8\pi}\sum_{k=1}^{\infty}\frac{\cos\left(\frac{\pi k\log(2)}{\log(7)}\right)\cos\left(\frac{\pi k\log(5)}{\log(7)}\right)\sin\left(2\pi k\frac{\log(x)}{\log(7)}\right)}{k\sin\left(\frac{\pi k\log(2)}{\log(7)}\right)\sin\left(\frac{\pi k\log(5)}{\log(7)}\right)}\\
&\quad+\frac{1}{8\pi}\sum_{k=1}^{\infty}\frac{\cos\left(\frac{\pi k\log(3)}{\log(2)}\right)\cos\left(\frac{\pi k\log(5)}{\log(2)}\right)\cos\left(\frac{\pi k\log(7)}{\log(2)}\right)\cos\left(2\pi k\frac{\log(x)}{\log(2)}\right)}{k\sin\left(\frac{\pi k\log(3)}{\log(2)}\right)\sin\left(\frac{\pi k\log(5)}{\log(2)}\right)\sin\left(\frac{\pi k\log(7)}{\log(2)}\right)}\\
\end{split}
\end{displaymath}
\begin{displaymath}
\begin{split}
\hspace{1.75cm}&\quad+\frac{1}{8\pi}\sum_{k=1}^{\infty}\frac{\cos\left(\frac{\pi k\log(2)}{\log(3)}\right)\cos\left(\frac{\pi k\log(5)}{\log(3)}\right)\cos\left(\frac{\pi k\log(7)}{\log(3)}\right)\cos\left(2\pi k\frac{\log(x)}{\log(3)}\right)}{k\sin\left(\frac{\pi k\log(2)}{\log(3)}\right)\sin\left(\frac{\pi k\log(5)}{\log(3)}\right)\sin\left(\frac{\pi k\log(7)}{\log(3)}\right)}\\
&\quad+\frac{1}{8\pi}\sum_{k=1}^{\infty}\frac{\cos\left(\frac{\pi k\log(2)}{\log(5)}\right)\cos\left(\frac{\pi k\log(3)}{\log(5)}\right)\cos\left(\frac{\pi k\log(7)}{\log(5)}\right)\cos\left(2\pi k\frac{\log(x)}{\log(5)}\right)}{k\sin\left(\frac{\pi k\log(2)}{\log(5)}\right)\sin\left(\frac{\pi k\log(3)}{\log(5)}\right)\sin\left(\frac{\pi k\log(7)}{\log(5)}\right)}\\
&\quad+\frac{1}{8\pi}\sum_{k=1}^{\infty}\frac{\cos\left(\frac{\pi k\log(2)}{\log(7)}\right)\cos\left(\frac{\pi k\log(3)}{\log(7)}\right)\cos\left(\frac{\pi k\log(5)}{\log(7)}\right)\cos\left(2\pi k\frac{\log(x)}{\log(7)}\right)}{k\sin\left(\frac{\pi k\log(2)}{\log(7)}\right)\sin\left(\frac{\pi k\log(3)}{\log(7)}\right)\sin\left(\frac{\pi k\log(5)}{\log(7)}\right)}+\frac{1}{2}\chi_{S_{2,3,5,7}}(x).
\end{split}
\end{displaymath}
\end{corollary}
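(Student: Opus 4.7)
The plan is to mirror the proofs given earlier for $N_{a,b}(x)$ and $N_{a,b,c}(x)$, specializing the general $n$-fold residue calculation of Theorem~5 to the case $n=4$. First I would apply Perron's formula to the Dirichlet series
\begin{displaymath}
\sum_{k=1}^{\infty}\frac{\chi_{S_{a,b,c,d}}(k)}{k^{s}}=\prod_{t\in\{a,b,c,d\}}\frac{1}{1-e^{-\log(t)s}},
\end{displaymath}
obtained by expanding each factor as a geometric series in $t^{-s}$. This gives
\begin{displaymath}
N_{a,b,c,d}(x)=\frac{1}{2\pi i}\int_{\gamma}\frac{x^{s}\,ds}{s\prod_{t\in\{a,b,c,d\}}(1-e^{-\log(t)s})}+\frac{1}{2}\chi_{S_{a,b,c,d}}(x)
\end{displaymath}
with $\gamma$ the vertical line from $1-i\infty$ to $1+i\infty$. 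I would then shift $\gamma$ to the left and sum all residues; the poles are at $s=0$ (of order four) and at $s=\pm 2\pi ik/\log(t)$ (simple) for each $k\in\mathbb{N}$ and each $t\in\{a,b,c,d\}$.

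Second, I would evaluate the residue at $s=0$ by Laurent-expanding
\begin{displaymath}
\frac{1}{1-e^{-\log(t)s}}=\frac{1}{\log(t)s}+\frac{1}{2}+\frac{\log(t)s}{12}-\frac{\log(t)^{3}s^{3}}{720}+O(s^{5})
\end{displaymath}
for each $t$, multiplying the four resulting series together with $x^{s}=\sum_{k=0}^{\infty}\log(x)^{k}s^{k}/k!$, and reading off the coefficient of $s^{-1}$. This is where every polynomial-in-$\log(x)$ term and every exotic constant in the statement (the leading $\tfrac{1}{16}$; the single-ratio constants $\log(t)/(48\log(u))$; the double-ratio constants $\log(t)\log(u)/(144\log(v)\log(w))$; and the cubic constants $-\log(t)^{3}/(720\log(u)\log(v)\log(w))$, which trace back to the Bernoulli number $B_{4}=-\tfrac{1}{30}$) must all emerge by routine computation.

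Third, for the oscillatory residues at $s=\pm 2\pi ik/\log(t_{m})$, I would use
\begin{displaymath}
\lim_{s\to\pm 2\pi ik/\log(t_{m})}\frac{s\mp 2\pi ik/\log(t_{m})}{1-e^{-\log(t_{m})s}}=\frac{1}{\log(t_{m})}
\end{displaymath}
to express each residue as a rational function in $x^{\pm 2\pi ik/\log(t_{m})}$ and in $t^{2\pi ik/\log(t_{m})}$ for $t\neq t_{m}$. Pairing the $+$ and $-$ residues and translating by
\begin{displaymath}
\cot\!\left(\frac{\pi k\log(t)}{\log(t_{m})}\right)=i\,\frac{t^{2\pi ik/\log(t_{m})}+1}{t^{2\pi ik/\log(t_{m})}-1}
\end{displaymath}
together with the standard expressions of $\sin$ and $\cos$ in $x^{\pm 2\pi ik/\log(t_{m})}$ produces, for each $t_{m}$, an expression of the shape $\tfrac{1}{8\pi k}(\sin_{x}+\text{terms involving up to three cotangents})$. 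The plain $\sin(2\pi k\log(x)/\log(t_{m}))/k$ summands, when summed over $k$, yield the four $-\tfrac{1}{8}B^{*}_{1}$ boundary corrections via the classical Fourier identity $\sum_{k=1}^{\infty}\sin(2\pi ky)/(\pi k)=B^{*}_{1}(\{y\})$, while the remaining terms furnish the single-cosine pair sums, the sine sums with two cotangent factors, and the cosine sums with three cotangent factors that appear in the statement.

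The hard part will be this final algebraic identity. For $n=3$ the corresponding identity was a $2^{2}=4$-term expansion handled directly in the preceding corollary; for $n=4$ there are $2^{3}=8$ sign patterns to track, with delicate cancellations of signs against the three different subset sizes $r=1,2,3$. The cleanest route is to recognize the sum as the specialization to $n=4$ of the general identity proved in Theorem~5 using the two combinatorial lemmas $S_{1}(x_{1},\ldots,x_{n})=2^{n}$ and $S_{2}(x_{1},\ldots,x_{n})=2^{n}\prod x_{k}$. Summing the residue at $s=0$ with all paired oscillatory residues and the Perron $\tfrac{1}{2}\chi_{S_{a,b,c,d}}(x)$ correction then gives the stated formula; setting $(a,b,c,d)=(2,3,5,7)$ at the end yields the companion formula for $N_{2,3,5,7}(x)$.
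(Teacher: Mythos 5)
Your proposal follows essentially the same route as the paper's proof: Perron's formula applied to the four-fold product $\prod_{t\in\{a,b,c,d\}}(1-e^{-\log(t)s})^{-1}$, the order-four pole at $s=0$ handled by multiplying the Laurent expansions $\frac{1}{1-e^{-\log(t)s}}=\frac{1}{\log(t)s}+\frac{1}{2}+\frac{\log(t)s}{12}-\frac{\log(t)^{3}s^{3}}{720}+\cdots$ against $x^{s}$, and the paired residues at $s=\pm\frac{2\pi ik}{\log(t_{m})}$ converted via the cotangent identity into the eight-term trigonometric combination (the paper expands this $n=4$ identity out directly rather than invoking Theorem 5, but that is only a cosmetic difference from your suggested shortcut). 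One minor slip: the classical Fourier identity should read $\sum_{k\geq1}\frac{\sin(2\pi ky)}{\pi k}=-B^{*}_{1}(\{y\})$, which, combined with the $+\sin\left(2\pi k\frac{\log(x)}{\log(t_{m})}\right)$ summand carrying the prefactor $\frac{1}{8\pi k}$, does indeed yield the $-\frac{1}{8}B^{*}_{1}$ corrections you state.
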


\begin{proof}
\noindent We have that
\begin{displaymath}
\begin{split}
\sum_{k=1}^{\infty}\frac{\chi_{S_{a,b,c,d}}(k)}{k^{s}}&=\left(\sum_{m_{1}=0}^{\infty}\frac{1}{a^{m_{1}s}}\right)\left(\sum_{m_{2}=0}^{\infty}\frac{1}{b^{m_{2}s}}\right)\left(\sum_{m_{3}=0}^{\infty}\frac{1}{c^{m_{3}s}}\right)\left(\sum_{m_{4}=0}^{\infty}\frac{1}{d^{m_{4}s}}\right)\\
&=\frac{1}{\left(1-e^{-\log(a)s}\right)\left(1-e^{-\log(b)s}\right)\left(1-e^{-\log(c)s}\right)\left(1-e^{-\log(d)s}\right)}.
\end{split}
\end{displaymath}
Therefore, by Perron's formula, we get that
\begin{displaymath}
\begin{split}
N_{a,b,c,d}(x)&=\frac{1}{2\pi i}\int_{\gamma}\frac{x^{s}}{s\left(1-e^{-\log(a)s}\right)\left(1-e^{-\log(b)s}\right)\left(1-e^{-\log(c)s}\right)\left(1-e^{-\log(d)s}\right)}ds\\
&=\Res_{s=0}\left(\frac{x^{s}}{s\left(1-e^{-\log(a)s}\right)\left(1-e^{-\log(b)s}\right)\left(1-e^{-\log(c)s}\right)\left(1-e^{-\log(d)s}\right)}\right)\\
&\quad+\sum_{k=1}^{\infty}\Res_{s=\frac{2\pi ik}{\log(a)}}\left(\frac{x^{s}}{s\left(1-e^{-\log(a)s}\right)\left(1-e^{-\log(b)s}\right)\left(1-e^{-\log(c)s}\right)\left(1-e^{-\log(d)s}\right)}\right)\\
&\quad+\sum_{k=1}^{\infty}\Res_{s=-\frac{2\pi ik}{\log(a)}}\left(\frac{x^{s}}{s\left(1-e^{-\log(a)s}\right)\left(1-e^{-\log(b)s}\right)\left(1-e^{-\log(c)s}\right)\left(1-e^{-\log(d)s}\right)}\right)\\
&\quad+\sum_{k=1}^{\infty}\Res_{s=\frac{2\pi ik}{\log(b)}}\left(\frac{x^{s}}{s\left(1-e^{-\log(a)s}\right)\left(1-e^{-\log(b)s}\right)\left(1-e^{-\log(c)s}\right)\left(1-e^{-\log(d)s}\right)}\right)\\
&\quad+\sum_{k=1}^{\infty}\Res_{s=-\frac{2\pi ik}{\log(b)}}\left(\frac{x^{s}}{s\left(1-e^{-\log(a)s}\right)\left(1-e^{-\log(b)s}\right)\left(1-e^{-\log(c)s}\right)\left(1-e^{-\log(d)s}\right)}\right)\\
&\quad+\sum_{k=1}^{\infty}\Res_{s=\frac{2\pi ik}{\log(c)}}\left(\frac{x^{s}}{s\left(1-e^{-\log(a)s}\right)\left(1-e^{-\log(b)s}\right)\left(1-e^{-\log(c)s}\right)\left(1-e^{-\log(d)s}\right)}\right)\\
&\quad+\sum_{k=1}^{\infty}\Res_{s=-\frac{2\pi ik}{\log(c)}}\left(\frac{x^{s}}{s\left(1-e^{-\log(a)s}\right)\left(1-e^{-\log(b)s}\right)\left(1-e^{-\log(c)s}\right)\left(1-e^{-\log(d)s}\right)}\right)\\
&\quad+\sum_{k=1}^{\infty}\Res_{s=\frac{2\pi ik}{\log(d)}}\left(\frac{x^{s}}{s\left(1-e^{-\log(a)s}\right)\left(1-e^{-\log(b)s}\right)\left(1-e^{-\log(c)s}\right)\left(1-e^{-\log(d)s}\right)}\right)\\
\end{split}
\end{displaymath}
\begin{displaymath}
\begin{split}
\hspace{1.7cm}&\quad+\sum_{k=1}^{\infty}\Res_{s=-\frac{2\pi ik}{\log(d)}}\left(\frac{x^{s}}{s\left(1-e^{-\log(a)s}\right)\left(1-e^{-\log(b)s}\right)\left(1-e^{-\log(c)s}\right)\left(1-e^{-\log(d)s}\right)}\right)\\
&\quad+\frac{1}{2}\chi_{S_{a,b,c,d}}(x),
\end{split}
\end{displaymath}
where $\gamma=\text{line from $1-i\infty$ to $1+i\infty$}$.\\
For the Residues, we have that
\begin{displaymath}
\begin{split}
&\Res_{s=0}\left(\frac{x^{s}}{s\left(1-e^{-\log(a)s}\right)\left(1-e^{-\log(b)s}\right)\left(1-e^{-\log(c)s}\right)\left(1-e^{-\log(d)s}\right)}\right)\\
&=\frac{\log(x)^{4}}{24\log(a)\log(b)\log(c)\log(d)}+\frac{\log(x)^{3}}{12\log(a)\log(b)\log(c)}+\frac{\log(x)^{3}}{12\log(a)\log(b)\log(d)}\\
&\quad+\frac{\log(x)^{3}}{12\log(a)\log(c)\log(d)}+\frac{\log(x)^{3}}{12\log(b)\log(c)\log(d)}+\frac{\log(a)\log(x)^{2}}{24\log(b)\log(c)\log(d)}\\
&\quad+\frac{\log(b)\log(x)^{2}}{24\log(a)\log(c)\log(d)}+\frac{\log(c)\log(x)^{2}}{24\log(a)\log(b)\log(d)}+\frac{\log(d)\log(x)^{2}}{24\log(a)\log(b)\log(c)}\\
&\quad+\frac{\log(x)^{2}}{8\log(a)\log(b)}+\frac{\log(x)^{2}}{8\log(a)\log(c)}+\frac{\log(x)^{2}}{8\log(a)\log(d)}+\frac{\log(x)^{2}}{8\log(b)\log(c)}+\frac{\log(x)^{2}}{8\log(b)\log(d)}\\
&\quad+\frac{\log(x)^{2}}{8\log(c)\log(d)}+\frac{\log(x)}{8\log(a)}+\frac{\log(x)}{8\log(b)}+\frac{\log(x)}{8\log(c)}+\frac{\log(x)}{8\log(d)}+\frac{\log(a)\log(x)}{24\log(b)\log(c)}\\
&\quad+\frac{\log(a)\log(x)}{24\log(b)\log(d)}+\frac{\log(a)\log(x)}{24\log(c)\log(d)}+\frac{\log(b)\log(x)}{24\log(a)\log(c)}+\frac{\log(b)\log(x)}{24\log(a)\log(d)}+\frac{\log(b)\log(x)}{24\log(c)\log(d)}\\
&\quad+\frac{\log(c)\log(x)}{24\log(a)\log(b)}+\frac{\log(c)\log(x)}{24\log(a)\log(d)}+\frac{\log(c)\log(x)}{24\log(b)\log(d)}+\frac{\log(d)\log(x)}{24\log(a)\log(b)}+\frac{\log(d)\log(x)}{24\log(a)\log(c)}\\
&\quad+\frac{\log(d)\log(x)}{24\log(b)\log(c)}+\frac{1}{16}+\frac{\log(a)}{48\log(b)}+\frac{\log(a)}{48\log(c)}+\frac{\log(a)}{48\log(d)}+\frac{\log(b)}{48\log(a)}+\frac{\log(b)}{48\log(c)}\\
&\quad+\frac{\log(b)}{48\log(d)}+\frac{\log(c)}{48\log(a)}+\frac{\log(c)}{48\log(b)}+\frac{\log(c)}{48\log(d)}+\frac{\log(d)}{48\log(a)}+\frac{\log(d)}{48\log(b)}+\frac{\log(d)}{48\log(c)}\\
&\quad+\frac{\log(a)\log(b)}{144\log(c)\log(d)}+\frac{\log(a)\log(c)}{144\log(b)\log(d)}+\frac{\log(a)\log(d)}{144\log(b)\log(c)}+\frac{\log(b)\log(c)}{144\log(a)\log(d)}\\
&\quad+\frac{\log(b)\log(d)}{144\log(a)\log(c)}+\frac{\log(c)\log(d)}{144\log(a)\log(b)}-\frac{\log(a)^{3}}{720\log(b)\log(c)\log(d)}-\frac{\log(b)^{3}}{720\log(a)\log(c)\log(d)}\\
&\quad-\frac{\log(c)^{3}}{720\log(a)\log(b)\log(d)}-\frac{\log(d)^{3}}{720\log(a)\log(b)\log(c)}
\end{split}
\end{displaymath}
and that
\begin{displaymath}
\begin{split}
&\Res_{s=\frac{2\pi ik}{\log(a)}}\left(\frac{x^{s}}{s\left(1-e^{-\log(a)s}\right)\left(1-e^{-\log(b)s}\right)\left(1-e^{-\log(c)s}\right)\left(1-e^{-\log(d)s}\right)}\right)\\
&=-\frac{ib^{\frac{2\pi ik}{\log(a)}}c^{\frac{2\pi ik}{\log(a)}}d^{\frac{2\pi ik}{\log(a)}}x^{\frac{2\pi ik}{\log(a)}}}{2\pi k\left(b^{\frac{2\pi ik}{\log(a)}}-1\right)\left(c^{\frac{2\pi ik}{\log(a)}}-1\right)\left(d^{\frac{2\pi ik}{\log(a)}}-1\right)}\;\;\forall k\in\mathbb{N}
\end{split}
\end{displaymath}
\begin{displaymath}
\begin{split}
&\Res_{s=-\frac{2\pi ik}{\log(a)}}\left(\frac{x^{s}}{s\left(1-e^{-\log(a)s}\right)\left(1-e^{-\log(b)s}\right)\left(1-e^{-\log(c)s}\right)\left(1-e^{-\log(d)s}\right)}\right)\\
&=-\frac{ix^{-\frac{2\pi ik}{\log(a)}}}{2\pi k\left(b^{\frac{2\pi ik}{\log(a)}}-1\right)\left(c^{\frac{2\pi ik}{\log(a)}}-1\right)\left(d^{\frac{2\pi ik}{\log(a)}}-1\right)}\;\;\forall k\in\mathbb{N}.
\end{split}
\end{displaymath}
Exactly similar relations hold also for the other Residues under exchanging $a$ with $b$, $a$ with $c$, and $a$ with $d$ ("permuting $a,b,c,d$").
Using the relations
\begin{displaymath}
\begin{split}
\sin\left(2\pi k\frac{\log(x)}{\log(a)}\right)&=\frac{1}{2}ix^{-\frac{2\pi ik}{\log(a)}}-\frac{1}{2}ix^{\frac{2\pi ik}{\log(a)}}\\
\cos\left(2\pi k\frac{\log(x)}{\log(a)}\right)&=\frac{1}{2}x^{-\frac{2\pi ik}{\log(a)}}+\frac{1}{2}x^{\frac{2\pi ik}{\log(a)}}\\
\cot\left(\frac{\pi k\log(b)}{\log(a)}\right)&=i\frac{b^{\frac{2\pi ik}{\log(a)}}+1}{b^{\frac{2\pi ik}{\log(a)}}-1},
\end{split}
\end{displaymath}
we establish (by expanding everything out) the following identity
\begin{displaymath}
\begin{split}
&\cot\left(\frac{\pi k\log(b)}{\log(a)}\right)\cot\left(\frac{\pi k\log(c)}{\log(a)}\right)\cot\left(\frac{\pi k\log(d)}{\log(a)}\right)\cos\left(2\pi k\frac{\log(x)}{\log(a)}\right)\\
&-\cot\left(\frac{\pi k\log(b)}{\log(a)}\right)\cot\left(\frac{\pi k\log(c)}{\log(a)}\right)\sin\left(2\pi k\frac{\log(x)}{\log(a)}\right)\\
&-\cot\left(\frac{\pi k\log(c)}{\log(a)}\right)\cot\left(\frac{\pi k\log(d)}{\log(a)}\right)\sin\left(2\pi k\frac{\log(x)}{\log(a)}\right)\\
&-\cot\left(\frac{\pi k\log(b)}{\log(a)}\right)\cot\left(\frac{\pi k\log(d)}{\log(a)}\right)\sin\left(2\pi k\frac{\log(x)}{\log(a)}\right)\\
&-\cot\left(\frac{\pi k\log(b)}{\log(a)}\right)\cos\left(2\pi k\frac{\log(x)}{\log(a)}\right)-\cot\left(\frac{\pi k\log(c)}{\log(a)}\right)\cos\left(2\pi k\frac{\log(x)}{\log(a)}\right)\\
&-\cot\left(\frac{\pi k\log(d)}{\log(a)}\right)\cos\left(2\pi k\frac{\log(x)}{\log(a)}\right)+\sin\left(2\pi k\frac{\log(x)}{\log(a)}\right)\\
&=\left(i\frac{b^{\frac{2\pi ik}{\log(a)}}+1}{b^{\frac{2\pi ik}{\log(a)}}-1}\right)\left(i\frac{c^{\frac{2\pi ik}{\log(a)}}+1}{c^{\frac{2\pi ik}{\log(a)}}-1}\right)\left(i\frac{d^{\frac{2\pi ik}{\log(a)}}+1}{d^{\frac{2\pi ik}{\log(a)}}-1}\right)\left(\frac{1}{2}x^{-\frac{2\pi ik}{\log(a)}}+\frac{1}{2}x^{\frac{2\pi ik}{\log(a)}}\right)\\
&-\left(i\frac{b^{\frac{2\pi ik}{\log(a)}}+1}{b^{\frac{2\pi ik}{\log(a)}}-1}\right)\left(i\frac{c^{\frac{2\pi ik}{\log(a)}}+1}{c^{\frac{2\pi ik}{\log(a)}}-1}\right)\left(\frac{1}{2}ix^{-\frac{2\pi ik}{\log(a)}}-\frac{1}{2}ix^{\frac{2\pi ik}{\log(a)}}\right)\\
&-\left(i\frac{c^{\frac{2\pi ik}{\log(a)}}+1}{c^{\frac{2\pi ik}{\log(a)}}-1}\right)\left(i\frac{d^{\frac{2\pi ik}{\log(a)}}+1}{d^{\frac{2\pi ik}{\log(a)}}-1}\right)\left(\frac{1}{2}ix^{-\frac{2\pi ik}{\log(a)}}-\frac{1}{2}ix^{\frac{2\pi ik}{\log(a)}}\right)\\
&-\left(i\frac{b^{\frac{2\pi ik}{\log(a)}}+1}{b^{\frac{2\pi ik}{\log(a)}}-1}\right)\left(i\frac{d^{\frac{2\pi ik}{\log(a)}}+1}{d^{\frac{2\pi ik}{\log(a)}}-1}\right)\left(\frac{1}{2}ix^{-\frac{2\pi ik}{\log(a)}}-\frac{1}{2}ix^{\frac{2\pi ik}{\log(a)}}\right)\\
\end{split}
\end{displaymath}
\begin{displaymath}
\begin{split}
\hspace{1.08cm}&-\left(i\frac{b^{\frac{2\pi ik}{\log(a)}}+1}{b^{\frac{2\pi ik}{\log(a)}}-1}\right)\left(\frac{1}{2}x^{-\frac{2\pi ik}{\log(a)}}+\frac{1}{2}x^{\frac{2\pi ik}{\log(a)}}\right)-\left(i\frac{c^{\frac{2\pi ik}{\log(a)}}+1}{c^{\frac{2\pi ik}{\log(a)}}-1}\right)\left(\frac{1}{2}x^{-\frac{2\pi ik}{\log(a)}}+\frac{1}{2}x^{\frac{2\pi ik}{\log(a)}}\right)\\
&-\left(i\frac{d^{\frac{2\pi ik}{\log(a)}}+1}{d^{\frac{2\pi ik}{\log(a)}}-1}\right)\left(\frac{1}{2}x^{-\frac{2\pi ik}{\log(a)}}+\frac{1}{2}x^{\frac{2\pi ik}{\log(a)}}\right)+\left(\frac{1}{2}ix^{-\frac{2\pi ik}{\log(a)}}-\frac{1}{2}ix^{\frac{2\pi ik}{\log(a)}}\right)\\
&=-\frac{4i\left(x^{-\frac{2\pi ik}{\log(a)}}+b^{\frac{2\pi ik}{\log(a)}}c^{\frac{2\pi ik}{\log(a)}}d^{\frac{2\pi ik}{\log(a)}}x^{\frac{2\pi ik}{\log(a)}}\right)}{\left(b^{\frac{2\pi ik}{\log(a)}}-1\right)\left(c^{\frac{2\pi ik}{\log(a)}}-1\right)\left(d^{\frac{2\pi ik}{\log(a)}}-1\right)}.
\end{split}
\end{displaymath}
This shows that for the first Residues (the "$a$ -Residues"), we have that
\begin{displaymath}
\begin{split}
&\Res_{s=\frac{2\pi ik}{\log(a)}}\left(\frac{x^{s}}{s\left(1-e^{-\log(a)s}\right)\left(1-e^{-\log(b)s}\right)\left(1-e^{-\log(c)s}\right)\left(1-e^{-\log(d)s}\right)}\right)\\
&+\Res_{s=-\frac{2\pi ik}{\log(a)}}\left(\frac{x^{s}}{s\left(1-e^{-\log(a)s}\right)\left(1-e^{-\log(b)s}\right)\left(1-e^{-\log(c)s}\right)\left(1-e^{-\log(d)s}\right)}\right)\\
&=-\frac{i\left(x^{-\frac{2\pi ik}{\log(a)}}+b^{\frac{2\pi ik}{\log(a)}}c^{\frac{2\pi ik}{\log(a)}}d^{\frac{2\pi ik}{\log(a)}}x^{\frac{2\pi ik}{\log(a)}}\right)}{2\pi k\left(b^{\frac{2\pi ik}{\log(a)}}-1\right)\left(c^{\frac{2\pi ik}{\log(a)}}-1\right)\left(d^{\frac{2\pi ik}{\log(a)}}-1\right)}\\
&=\frac{1}{8\pi k}\bigg(\cot\left(\frac{\pi k\log(b)}{\log(a)}\right)\cot\left(\frac{\pi k\log(c)}{\log(a)}\right)\cot\left(\frac{\pi k\log(d)}{\log(a)}\right)\cos\left(2\pi k\frac{\log(x)}{\log(a)}\right)\\
&\hspace{1.6cm}-\cot\left(\frac{\pi k\log(b)}{\log(a)}\right)\cot\left(\frac{\pi k\log(c)}{\log(a)}\right)\sin\left(2\pi k\frac{\log(x)}{\log(a)}\right)\\
&\hspace{1.6cm}-\cot\left(\frac{\pi k\log(c)}{\log(a)}\right)\cot\left(\frac{\pi k\log(d)}{\log(a)}\right)\sin\left(2\pi k\frac{\log(x)}{\log(a)}\right)\\
&\hspace{1.6cm}-\cot\left(\frac{\pi k\log(b)}{\log(a)}\right)\cot\left(\frac{\pi k\log(d)}{\log(a)}\right)\sin\left(2\pi k\frac{\log(x)}{\log(a)}\right)\\
&\hspace{1.6cm}-\cot\left(\frac{\pi k\log(b)}{\log(a)}\right)\cos\left(2\pi k\frac{\log(x)}{\log(a)}\right)-\cot\left(\frac{\pi k\log(c)}{\log(a)}\right)\cos\left(2\pi k\frac{\log(x)}{\log(a)}\right)\\
&\hspace{1.6cm}-\cot\left(\frac{\pi k\log(d)}{\log(a)}\right)\cos\left(2\pi k\frac{\log(x)}{\log(a)}\right)+\sin\left(2\pi k\frac{\log(x)}{\log(a)}\right)\bigg)\;\;\forall k\in\mathbb{N}.
\end{split}
\end{displaymath}
By exchanging the variable $a$ with all other variables $b$, $c$ and $d$ ("permuting $a,b,c,d$"), we get all four Residues (the "$a,b,c,d$ -Residues"), which have all the same structure. Summing everything up, we get our formula for $N_{a,b,c,d}(x)$. Setting $a=2$, $b=3$, $c=5$ and $d=7$, we get also the formula for $N_{2,3,5,7}(x)$.
\end{proof}

\noindent And so on.\\
These formulas are exactly equivalent to the previous mentioned formulas.

\section{The Formula for the Counting Function of the Natural Numbers of the Form $a^{p^{2}}b^{q^{2}}$}

\noindent Let $a,b\in\mathbb{N}$ such that $a<b$ and $\gcd(a,b)=1$.\\
\noindent For $x\in\mathbb{R}_{0}^{+}$, we define the function $N^{(2)}_{a,b}(x)$ by
\begin{displaymath}
\begin{split}
N^{(2)}_{a,b}(x):&=\sum_{\substack{a^{p^{2}}b^{q^{2}}\leq x\\p\in\mathbb{N}_{0},q\in\mathbb{N}_{0}}}1.
\end{split}
\end{displaymath}

\noindent Moreover, we define\begin{displaymath}
\begin{split}
S^{(2)}_{a,b}:&=\left\{a^{p^{2}}b^{q^{2}}:p\in\mathbb{N}_{0},q\in\mathbb{N}_{0}\right\},\\
\chi_{S^{(2)}_{a,b}}(x):&=\begin{cases}1&\text{if $x\in S^{(2)}_{a,b}$}\\0&\text{if $x\notin S^{(2)}_{a,b}$}\end{cases}.
\end{split}
\end{displaymath}

\noindent We have that
\begin{displaymath}
\begin{split}
N^{(2)}_{a,b}(x)&=1+\sum_{k=0}^{\left\lfloor\sqrt{\log_{b}(x)}\right\rfloor}\left\lfloor\sqrt{\log_{a}\left(\frac{x}{b^{k^{2}}}\right)}\right\rfloor+\left\lfloor\sqrt{\log_{b}(x)}\right\rfloor.
\end{split}
\end{displaymath}

\noindent We have also the following
\begin{theorem}(Formula for $N^{(2)}_{a,b}(x)$)\\
\noindent For every real number $x>1$, we have that
\begin{displaymath}
\begin{split}
N^{(2)}_{a,b}(x)&=\frac{\pi\log(x)}{4\sqrt{\log(a)\log(b)}}+\frac{1}{2}\sqrt{\frac{\log(x)}{\log(a)}}+\frac{1}{2}\sqrt{\frac{\log(x)}{\log(b)}}+\frac{1}{4}-\frac{1}{2}B^{*}_{1}\left(\left\{\sqrt{\frac{\log(x)}{\log(a)}}\right\}\right)\\
&\quad-\frac{1}{2}B^{*}_{1}\left(\left\{\sqrt{\frac{\log(x)}{\log(b)}}\right\}\right)+\sqrt{\log(x)}\sum_{n=1}^{\infty}\sum_{m=1}^{\infty}\frac{J_{1}\left(2\pi\sqrt{\frac{n^{2}\log(a)+m^{2}\log(b)}{\log(a)\log(b)}\log(x)}\right)}{\sqrt{n^{2}\log(a)+m^{2}\log(b)}}\\
&\quad+\frac{1}{2}\sqrt{\frac{\log(x)}{\log(a)}}\sum_{k=1}^{\infty}\frac{J_{1}\left(2\pi k\sqrt{\frac{\log(x)}{\log(b)}}\right)}{k}+\frac{1}{2}\sqrt{\frac{\log(x)}{\log(b)}}\sum_{k=1}^{\infty}\frac{J_{1}\left(2\pi k\sqrt{\frac{\log(x)}{\log(a)}}\right)}{k}+\frac{1}{2}\chi_{S^{(2)}_{a,b}}(x).
\end{split}
\end{displaymath}
\end{theorem}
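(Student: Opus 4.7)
The plan is to interpret $N^{(2)}_{a,b}(x)$ as a lattice point count in a quarter of an ellipse and then apply the classical Hardy--Voronoi (Poisson-summation) identity for the number of lattice points in a disk, adapted to a general positive-definite binary quadratic form by a linear change of variables. Setting $T := \log(x)$, $\alpha := \log(a)$, $\beta := \log(b)$, the condition $a^{p^{2}}b^{q^{2}} \leq x$ becomes $\alpha p^{2} + \beta q^{2} \leq T$, so that $N^{(2)}_{a,b}(x)$ is exactly the number of pairs $(p,q) \in \mathbb{N}_{0}^{2}$ lying in the quarter of the ellipse $E_{T} := \{(u,v) \in \mathbb{R}^{2} : \alpha u^{2} + \beta v^{2} \leq T\}$.

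First I would perform a symmetry decomposition. Let $N_{E}(T) := \#(\mathbb{Z}^{2} \cap E_{T})$, let $X(T) := \lfloor\sqrt{T/\alpha}\rfloor$ and $Y(T) := \lfloor\sqrt{T/\beta}\rfloor$ count the positive-axis points, and let $I(T)$ denote the strictly interior first-quadrant count. The four reflections $(p,q) \mapsto (\pm p, \pm q)$ give $N_{E}(T) = 1 + 2X(T) + 2Y(T) + 4I(T)$, and combining this with $N^{(2)}_{a,b}(x) = 1 + X(T) + Y(T) + I(T)$ yields
\begin{displaymath}
N^{(2)}_{a,b}(x) \;=\; \tfrac{1}{4}N_{E}(T) + \tfrac{3}{4} + \tfrac{1}{2}X(T) + \tfrac{1}{2}Y(T).
\end{displaymath}
The axis counts $X(T)$ and $Y(T)$ are then treated via the identity $\lfloor y \rfloor = y - \tfrac{1}{2} - B^{*}_{1}(\{y\})$ under the averaged-value convention at integers; this already accounts for the terms $\tfrac{1}{2}\sqrt{\log(x)/\log(a)}$, $\tfrac{1}{2}\sqrt{\log(x)/\log(b)}$, the constant $\tfrac{1}{4}$, and the two $-\tfrac{1}{2}B^{*}_{1}$ terms appearing in the claimed formula.

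Next I would apply Poisson summation to $N_{E}(T)$. The linear change of variables $(u,v) = (p\sqrt{\alpha}, q\sqrt{\beta})$ turns $E_{T}$ into the disk $u^{2}+v^{2}\leq T$ and $\mathbb{Z}^{2}$ into the lattice $\sqrt{\alpha}\mathbb{Z} \times \sqrt{\beta}\mathbb{Z}$, whose dual is $\tfrac{1}{\sqrt{\alpha}}\mathbb{Z} \times \tfrac{1}{\sqrt{\beta}}\mathbb{Z}$. Since the Fourier transform of the indicator of the disk of radius $\sqrt{T}$ at a frequency of modulus $\rho$ equals $\sqrt{T}\,J_{1}(2\pi\sqrt{T}\,\rho)/\rho$, Poisson summation produces
\begin{displaymath}
N_{E}(T) \;=\; \frac{\pi T}{\sqrt{\alpha\beta}} \;+\; \sqrt{T}\sum_{(n,m)\neq(0,0)} \frac{J_{1}\bigl(2\pi\sqrt{T(n^{2}\beta+m^{2}\alpha)/(\alpha\beta)}\bigr)}{\sqrt{n^{2}\beta+m^{2}\alpha}}.
\end{displaymath}
Isolating the three cases $n=0$ with $m\neq 0$, $m=0$ with $n\neq 0$, and $n,m\neq 0$, and exploiting the $\pm$ symmetries, I obtain (after dividing by $4$ and relabelling $n\leftrightarrow m$) exactly the three Bessel-function series of the theorem: the single sum involving $J_{1}(2\pi k\sqrt{\log(x)/\log(b)})$, its counterpart with $a$ and $b$ interchanged, and the double sum over $n,m \geq 1$. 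The leading term $\tfrac{\pi T}{4\sqrt{\alpha\beta}}$ becomes the announced $\tfrac{\pi\log(x)}{4\sqrt{\log(a)\log(b)}}$, and the half-weight $\tfrac{1}{2}\chi_{S^{(2)}_{a,b}}(x)$ arises as the standard Perron-type correction accounting for the jump of $N^{(2)}_{a,b}$ at values of $x$ lying in $S^{(2)}_{a,b}$, equivalently when lattice points lie exactly on $\partial E_{T}$.

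The main obstacle will be the rigorous handling of the Hardy--Voronoi double sum, which is only conditionally convergent (the same phenomenon as in the Gauss circle problem). I would invoke the classical interpretation via spherical partial sums, grouping the terms by the increasing values of $n^{2}\beta+m^{2}\alpha \leq R$ and letting $R\to\infty$, and carefully track the bookkeeping for lattice points on $\partial E_{T}$ and on the coordinate axes so that the factor $\tfrac{1}{2}\chi_{S^{(2)}_{a,b}}(x)$ emerges with exactly the correct sign and coefficient.
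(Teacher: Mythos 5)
The paper never actually proves this theorem: it is stated at the end of Section 6 with no argument attached, and the residue/Perron machinery used for the other results is not available here because the relevant Dirichlet series $\bigl(\sum_{p\geq0}a^{-p^{2}s}\bigr)\bigl(\sum_{q\geq0}b^{-q^{2}s}\bigr)$ is a theta-type function with no useful meromorphic continuation. So your argument has to stand on its own, and it does. Writing $T=\log x$, $\alpha=\log a$, $\beta=\log b$, the decomposition $N^{(2)}_{a,b}(x)=\tfrac14N_{E}(T)+\tfrac34+\tfrac12X(T)+\tfrac12Y(T)$ is correct; the axis terms give exactly the block $\tfrac12\sqrt{T/\alpha}+\tfrac12\sqrt{T/\beta}+\tfrac14-\tfrac12B^{*}_{1}-\tfrac12B^{*}_{1}$ (note $\tfrac34-\tfrac14-\tfrac14=\tfrac14$); and in the Poisson summation the covolume $\sqrt{\alpha\beta}$ cancels against the dual-lattice norm $\sqrt{n^{2}/\alpha+m^{2}/\beta}$ to give $\tfrac{\pi T}{\sqrt{\alpha\beta}}+\sqrt{T}\sum_{(n,m)\neq(0,0)}J_{1}(\cdot)/\sqrt{n^{2}\beta+m^{2}\alpha}$, which after splitting off the axes, dividing by four and relabelling reproduces the three Bessel series with the stated coefficients. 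Two points should be nailed down to make this a complete proof. First, the Hardy identity for the disk holds only with spherical partial sums of the conditionally convergent series and with boundary lattice points weighted by $\tfrac12$; you flag this, and it is the analogue of the paper's own ``$R\rightarrow\infty$ in an appropriate manner'' caveats. Second, for the correction to come out as exactly $\tfrac12\chi_{S^{(2)}_{a,b}}(x)$ you need that for a given $T$ at most one point of $\mathbb{N}_{0}^{2}$ lies on $\alpha p^{2}+\beta q^{2}=T$ (in particular, an axis point and an interior first-quadrant point cannot occur simultaneously); this follows from the irrationality of $\log a/\log b$, itself a consequence of $\gcd(a,b)=1$ with $a,b\geq2$, since two distinct solutions would force $\log a/\log b=(q'^{2}-q^{2})/(p^{2}-p'^{2})\in\mathbb{Q}$. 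With that observation added, your proposal is a correct and essentially complete proof --- indeed the only one on record, since the paper omits it.
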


\noindent This formula converges very rapidly.\\

\noindent Setting $a=2$ and $b=3$, we get
\begin{corollary}(Formula for $N^{(2)}_{2,3}(x)$)\\
\noindent For every real number $x>1$, we have that
\begin{displaymath}
\begin{split}
N^{(2)}_{2,3}(x)&=\frac{\pi\log(x)}{4\sqrt{\log(2)\log(3)}}+\frac{1}{2}\sqrt{\frac{\log(x)}{\log(2)}}+\frac{1}{2}\sqrt{\frac{\log(x)}{\log(3)}}+\frac{1}{4}-\frac{1}{2}B^{*}_{1}\left(\left\{\sqrt{\frac{\log(x)}{\log(2)}}\right\}\right)\\
&\quad-\frac{1}{2}B^{*}_{1}\left(\left\{\sqrt{\frac{\log(x)}{\log(3)}}\right\}\right)+\sqrt{\log(x)}\sum_{n=1}^{\infty}\sum_{m=1}^{\infty}\frac{J_{1}\left(2\pi\sqrt{\frac{n^{2}\log(2)+m^{2}\log(3)}{\log(2)\log(3)}\log(x)}\right)}{\sqrt{n^{2}\log(2)+m^{2}\log(3)}}\\
&\quad+\frac{1}{2}\sqrt{\frac{\log(x)}{\log(2)}}\sum_{k=1}^{\infty}\frac{J_{1}\left(2\pi k\sqrt{\frac{\log(x)}{\log(3)}}\right)}{k}+\frac{1}{2}\sqrt{\frac{\log(x)}{\log(3)}}\sum_{k=1}^{\infty}\frac{J_{1}\left(2\pi k\sqrt{\frac{\log(x)}{\log(2)}}\right)}{k}+\frac{1}{2}\chi_{S^{(2)}_{2,3}}(x).
\end{split}
\end{displaymath}
\end{corollary}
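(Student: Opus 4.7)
Writing $t=\log x$, $\alpha=\log a$, $\beta=\log b$, I observe that $N^{(2)}_{a,b}(x)$ equals $\#\{(p,q)\in\mathbb{N}_0^2:\alpha p^2+\beta q^2\leq t\}$. The strategy is to reduce this quarter-plane count to a full $\mathbb{Z}^2$ lattice count in an ellipse, apply a Hardy--Voronoi Bessel expansion to the latter, and finally identify the residual discrete corrections with $\tfrac12\chi_{S^{(2)}_{a,b}}(x)$. Writing
\[
R(t):=\#\{(p,q)\in\mathbb{Z}^2:\alpha p^2+\beta q^2\leq t\},
\]
and separating in $\mathbb{Z}^2$ the origin, the four open half-axes, and the four open quadrants, I obtain
\[
N^{(2)}_{a,b}(x)=\frac{R(t)}{4}+\frac{1}{2}\bigl\lfloor\sqrt{t/\alpha}\bigr\rfloor+\frac{1}{2}\bigl\lfloor\sqrt{t/\beta}\bigr\rfloor+\frac{3}{4}.
\]

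For $R(t)$, I rescale by $(p,q)\mapsto(\sqrt\alpha\,p,\sqrt\beta\,q)$, which turns the ellipse into the Euclidean disk of radius $\sqrt t$, and apply Poisson summation. The Fourier transform of the disk indicator $\chi_{B_r}$ is $\vec k\mapsto (r/|\vec k|)\,J_1(2\pi r|\vec k|)$ at $\vec k\neq 0$, with value $\pi r^2$ at the origin, and the dual lattice of $(\sqrt\alpha\,\mathbb Z)\times(\sqrt\beta\,\mathbb Z)$ consists of the points $(m/\sqrt\alpha,\,n/\sqrt\beta)$ of squared dual norm $m^2/\alpha+n^2/\beta$. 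Since the indicator is discontinuous on its boundary, Poisson summation yields the Hardy--Voronoi identity with a $\tfrac12 B(t)$ boundary correction,
\[
R(t)=\frac{\pi t}{\sqrt{\alpha\beta}}+\sqrt t\!\!\sum_{(m,n)\in\mathbb{Z}^2\setminus\{(0,0)\}}\!\!\frac{J_1\!\bigl(2\pi\sqrt{t(n^2\alpha+m^2\beta)/(\alpha\beta)}\bigr)}{\sqrt{n^2\alpha+m^2\beta}}+\frac{1}{2}B(t),
\]
where $B(t):=\#\{(p,q)\in\mathbb{Z}^2:\alpha p^2+\beta q^2=t\}$. Grouping the dual indices by signs, pairs with $mn\neq 0$ contribute four times the first-quadrant sum, while pure-axis terms $(m,0)$ and $(0,n)$ each contribute twice, collapsing to the two single series $2\sqrt{t/\beta}\sum_{m\geq 1}J_1(2\pi m\sqrt{t/\alpha})/m$ and $2\sqrt{t/\alpha}\sum_{n\geq 1}J_1(2\pi n\sqrt{t/\beta})/n$. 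Division by $4$ then reproduces the polynomial main term $\pi t/(4\sqrt{\alpha\beta})$ together with the three Bessel blocks in the statement.

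Applying
\[
\lfloor y\rfloor=y-\tfrac12-B^{*}_{1}(\{y\})+\tfrac12\mathbf{1}_{\{y\in\mathbb{Z}\}}
\]
to $\lfloor\sqrt{t/\alpha}\rfloor$ and $\lfloor\sqrt{t/\beta}\rfloor$ yields the linear terms $\tfrac12\sqrt{t/\alpha}+\tfrac12\sqrt{t/\beta}$, the two $B^{*}_{1}$ corrections, and combines $\tfrac34-\tfrac14-\tfrac14=\tfrac14$. The leftover discrete residue
\[
\tfrac18 B(t)+\tfrac14\mathbf{1}_{\{\sqrt{t/\alpha}\in\mathbb{Z}\}}+\tfrac14\mathbf{1}_{\{\sqrt{t/\beta}\in\mathbb{Z}\}}
\]
must therefore equal $\tfrac12\chi_{S^{(2)}_{a,b}}(x)$, which I verify by case analysis using the uniqueness of the factorisation $x=a^{p^{2}}b^{q^{2}}$ guaranteed by $\gcd(a,b)=1$: for $p,q\geq 1$ one has $B(t)=4$ and neither indicator fires, giving $1/2$; for $p\geq 1,\,q=0$ one has $B(t)=2$ and only the $\alpha$-indicator fires, again $1/2$; symmetrically for $p=0,\,q\geq 1$; and the degenerate case $p=q=0$, i.e.\ $x=1$, is precisely what the hypothesis $x>1$ excludes.

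The main obstacle is the rigorous justification of the Bessel expansion for $R(t)$: the dual series is only conditionally convergent, as $J_1$ decays like $|\vec m|^{-1/2}$ and the summands like $|\vec m|^{-3/2}$ with oscillation, so the identity has to be read as the limit of symmetrically truncated partial sums, exactly in the spirit of the $\lim_{R\to\infty}$ conventions attached to the Hardy--Littlewood formulas earlier in the paper. This is the classical analytic framework of the Gauss circle problem adapted to the diagonal binary quadratic form $\alpha p^{2}+\beta q^{2}$; once the rescaling has reduced the ellipse to a Euclidean disk, the Hardy--Voronoi machinery applies, and everything else in the proof is bookkeeping of floor identities and the case split above.
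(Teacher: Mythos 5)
Your argument is correct in all its bookkeeping, but it is worth saying up front that the paper offers no proof to compare it with: Theorem 14 (the general formula for $N^{(2)}_{a,b}(x)$) is stated without proof, and the corollary is obtained merely by setting $a=2$, $b=3$. What you supply is therefore an actual proof where the paper has a gap, and it takes the natural route: since the Dirichlet series $\sum_{p\geq0}a^{-p^{2}s}$ is not meromorphic with isolated poles, the Perron-formula-plus-residues machinery used everywhere else in the paper cannot apply here, and the problem is genuinely a lattice-point count for the ellipse $p^{2}\log a+q^{2}\log b\leq\log x$. I checked your reduction $N^{(2)}_{a,b}(x)=\tfrac14R(t)+\tfrac12\lfloor\sqrt{t/\alpha}\rfloor+\tfrac12\lfloor\sqrt{t/\beta}\rfloor+\tfrac34$, the matching of the three Bessel blocks after dividing the dual-lattice sum by $4$ (the coefficients $\sqrt{t}/\sqrt{n^{2}\alpha+m^{2}\beta}$ and $\tfrac12\sqrt{t/\beta}\,J_{1}(2\pi m\sqrt{t/\alpha})/m$ both come out right), the constant $\tfrac34-\tfrac14-\tfrac14=\tfrac14$, and the case analysis showing $\tfrac18B(t)+\tfrac14\mathbf{1}_{\{\sqrt{t/\alpha}\in\mathbb{Z}\}}+\tfrac14\mathbf{1}_{\{\sqrt{t/\beta}\in\mathbb{Z}\}}=\tfrac12\chi_{S^{(2)}_{a,b}}(x)$ for $x>1$ (including why $x=1$ must be excluded); all of this is sound, granted the uniqueness of the representation $x=a^{p^{2}}b^{q^{2}}$, which for $a=2$, $b=3$ is immediate from unique factorization. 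The one step you assert rather than prove is the Hardy--Voronoi identity for $R(t)$ with the $\tfrac12B(t)$ boundary convention and the conditional convergence of the Bessel series; this is a classical but nontrivial result (Hardy's identity for the circle, extended to positive definite binary forms), and a fully rigorous write-up would either cite it precisely or redo the contour/summation argument, and would also have to reconcile the paper's iterated ordering $\sum_{n}\sum_{m}$ with the expanding-ball ordering in which the identity is actually established. With that reference supplied, your proof is complete and, unlike the paper, actually establishes the statement.
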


\noindent We have that
\begin{displaymath}
\begin{split}
S^{(2)}_{2,3}:&=\left\{2^{p^{2}}3^{q^{2}}:p\in\mathbb{N}_{0},q\in\mathbb{N}_{0}\right\}\\
&=\left\{1,2,3,6,16,48,81,162,\ldots\right\},
\end{split}
\end{displaymath}

\noindent and therefore we get the following table:

\begin{table}[htbp]
\begin{center}
\begin{tabular}{| c | c | c | c |}
\hline
$x$&$N^{(2)}_{2,3}(x)$&$\text{Formula for }N^{(2)}_{2,3}(x)$&$\text{Number of terms $(n,m)$ needed with $k=400$}$\\
\hline
\bfseries $1$&$1$&$1.077194794603379$&$(n,m)=(1,1)\text{\;at $x=1.1$}$\\
\hline
\bfseries $10$&$4$&$4.069103424005291$&$(n,m)=(1,1)$\\
\hline
\bfseries $10^{2}$&$7$&$7.000949506610362$&$(n,m)=(5,5)$\\
\hline
\bfseries $10^{3}$&$9$&$9.086395912838084$&$(n,m)=(3,3)$\\
\hline
\bfseries $10^{4}$&$11$&$11.038613589829053$&$(n,m)=(5,5)$\\
\hline
\bfseries $10^{5}$&$15$&$15.012706923272531$&$(n,m)=(5,5)$\\
\hline
\bfseries $10^{6}$&$17$&$17.046462385363300$&$(n,m)=(5,5)$\\
\hline
\bfseries $10^{7}$&$18$&$18.408421860888305$&$(n,m)=(9,9)$\\
\hline
\bfseries $10^{8}$&$22$&$22.127760008955621$&$(n,m)=(6,6)$\\
\hline
\bfseries $10^{9}$&$24$&$24.034210155019944$&$(n,m)=(8,8)$\\
\hline
\bfseries $10^{10}$&$26$&$26.009844154207983$&$(n,m)=(9,9)$\\
\hline
\bfseries $10^{10^{2}}$&$226$&$226.001668111078420$&$(n,m)=(39,39)$\\
\hline
\bfseries $10^{10^{3}}$&$2122$&$2122.031291011313557$&$(n,m)=(168,168)$\\
\hline
\bfseries $10^{10^{4}}$&$20886$&$20886.032472386492101$&$(n,m)=(400,400)$\\
\hline
\bfseries $10^{10^{5}}$&$207756$&$207756.0303040763527672$&$(n,m)=(1000,1000)$\\
\hline
\bfseries $10^{10^{6}}$&$2074033$&$2074033.0733802760244109$&$(n,m)=(1400,1400)$\\
\hline
\end{tabular}
\caption{Values of $N^{(2)}_{2,3}(x)$}\end{center}
\end{table}

\section{Conclusion}

\noindent We have presented and proved the formulas for the distribution of every smooth number sequence. This article and the proofs of these formulas will soon be published in a Journal.

\bigskip
\hrule
\bigskip

\noindent 2010 {\it Mathematics Subject Classification}: Primary 40A30; Secondary 11Y55.

\noindent\emph{Keywords:} distribution of smooth numbers, distribution of friable numbers, distribution of $2$-smooth numbers (powers of $2$), distribution of $3$-smooth numbers (harmonic numbers), distribution of $5$-smooth numbers (regular numbers or Hamming numbers), distribution of $7$-smooth numbers (Humble numbers or "highly composite numbers"), distribution of all smooth numbers, distribution of all friable numbers, distribution of $p_{n}$-smooth numbers, distribution of the natural numbers of the form $a^{p}$ less than or equal to $x$, distribution of the natural numbers of the form $a^{p}b^{q}$ less than or equal to $x$, distribution of the natural numbers of the form $a^{p}b^{q}c^{l}$ less than or equal to $x$, distribution of the natural numbers of the form $a^{p}b^{q}c^{l}d^{f}$ less than or equal to $x$, distribution of the natural numbers of the form $a_{1}^{q_{1}}a_{2}^{q_{2}}a_{3}^{q_{3}}\cdots a_{n}^{q_{n}}$ less than or equal to $x$, distribution of the natural numbers of the form $2^{p}3^{q}$ less than or equal to $x$, distribution of the natural numbers of the form $2^{p}3^{q}5^{l}$ less than or equal to $x$, distribution of the natural numbers of the form $2^{p}3^{q}5^{l}7^{f}$ less than or equal to $x$, distribution of the natural numbers of the form $2^{q_{1}}3^{q_{2}}5^{q_{3}}\cdots p_{n}^{q_{n}}$ less than or equal to $x$, distribution of the natural numbers of the form $a^{p^{2}}b^{q^{2}}$ less than or equal to $x$, distribution of the natural numbers of the form $2^{p^{2}}3^{q^{2}}$ less than or equal to $x$.


\begin{thebibliography}{9}

\bibitem{1}
http://mathworld.wolfram.com/SmoothNumber.html

\bibitem{2}
https://en.wikipedia.org/wiki/Smooth\_number

\bibitem{3}
https://oeis.org/A003586

\bibitem{4}
B. C. Berndt, Ramanujan's Notebooks, Part IV, \emph{Springer}, 66-69, 1994.

\bibitem{5}
Srinivasa Ramanujan, Collected Papers of Srinivasa Ramanujan, \emph{Oxford University Press}, 2000.

\bibitem{6}
math.stackexchange.com/questions/15966/ramanujans-first-letter-to-hardy-and-the-number-of-$3$-smooth-integers

\bibitem{7}
http://math.stackexchange.com/questions/17267/on-the-consequences-of-an-exact-de-bruijn-function-or-if-ramanujan-had-more?

\bibitem{8}
A. Granville, Smooth numbers: computational number theory and beyond, \emph{Algorithmic Number Theory}, MSRI Publications, Volume 44, 2008.

\bibitem{9}
https://en.wikipedia.org/wiki/Regular\_number

\bibitem{10}
https://oeis.org/A051037

\bibitem{11}
G. H. Hardy, A lattice-point problem, in "Ramanujan: Twelve lectures on subjects
\hspace*{0.1cm} suggested by his life and work", \emph{AMS Chelsea}, page 72, 1999.

\bibitem{12}
G. H. Hardy, J. E. Littlewood, Some problems of Diophantine approximation: The
\hspace*{0.1cm} lattice-points of a right-angled triangle, \emph{Proc. London Math. Soc.}, page 35, 1922.

\bibitem{13}
Emanuele Tron, SNS Pisa, emanuele.tron@sns.it.

\bibitem{14}
http://oeis.org/A002473

\bibitem{15}
cf. Donald Mintz, donald.jmintz@gmail.com.

\bibitem{16}
cf. Donald J. Mintz, $2$,$3$ Sequence as Binary Mixture, \emph{The Fibonacci Quarterly},\\
\hspace*{0.1cm} Volume 19, 351-360, 1981.

\end{thebibliography}
\end{document}